\documentclass[11pt]{article}
\usepackage[margin=1.24in]{geometry}
\usepackage[english]{babel} 
\usepackage{amsmath, amssymb, amsthm, mathtools, dsfont} 
\usepackage[normalem]{ulem}
\usepackage[dvipsnames]{xcolor} 
\usepackage{graphicx, booktabs} 
\usepackage[bookmarks=false]{hyperref} 
\usepackage{comment, autonum} 
\usepackage{enumerate}
\definecolor{darkgreen}{rgb}{0,0.5,0}

\newcommand{\Honetwo}{{H_\diamondsuit^2(\Omega)}}

\newcommand{\Honethree}{{H_\diamondsuit^3(\Omega)}}
\newcommand{\Honefour}{{H_\diamondsuit^4(\Omega)}}
\newcommand{\Honefive}{{H_\diamondsuit^5(\Omega)}}
\DeclarePairedDelimiter{\paren}{\lparen}{\rparen}
\DeclarePairedDelimiter{\abs}{\lvert}{\rvert}

\newcommand{\tild}[1]{\tilde{#1}} 
\newcommand{\R}{\mathbb{R}} 
\newcommand{\N}{\mathbb{N}} 
\newcommand{\bigO}{\mathcal{O}}
\newcommand{\e}{\mathrm{e}} 
\newcommand{\id}{I} 
\DeclareMathOperator{\diagmatrix}{diag} 
\DeclareMathOperator{\tr}{tr} 
\newcommand{\transpose}{\top} 
\newcommand{\ccdot}{\mathbin{:}}
\newcommand{\boundary}{\partial}

\newcommand{\embed}{\hookrightarrow}

\newcommand{\conv}{\mathbin{\ast}}

\newcommand{\E}{\mathcal{E}}
\newcommand{\dx}[1]{\partial_{x_{#1}}}
\newcommand{\dt}{\partial_{t}}
\newcommand{\ddt}{\frac{\textup d}{\textup{d} t}}
\newcommand{\dxx}{\,\textup{d}{x}}
\newcommand{\ds}{\,\textup{d}{s}}
\newcommand{\dS}{\,\textup{d}{S}}
\newcommand{\jac}{\boldsymbol{\nabla}} 
\newcommand{\lapl}{\Delta} 
\newcommand{\grad}{\nabla} 
\newcommand{\divv}{\nabla \cdot} 
\newcommand{\curl}{\nabla \times} 
\newcommand{\Divv}{\boldsymbol{\nabla} \cdot } 
\newcommand{\laplvec}{\Delta}
\newcommand{\quater}{\frac{1}{4}}

\newcommand{\half}{\frac{1}{2}}
\newcommand{\intt}{\int_0^t}
\newcommand{\den}{\varrho}
\newcommand{\vel}{v}
\newcommand{\pre}{p}
\newcommand{\ent}{s}
\newcommand{\tem}{T}
\newcommand{\stress}{\sigma}
\newcommand{\strain}{\varepsilon}
\newcommand{\force}{f_B}
\newcommand{\heat}{q}
\newcommand{\work}{\phi}
\newcommand{\heatK}{\mathcal{K}}
\newcommand{\shearK}{\mathcal{S}}
\newcommand{\bulkK}{\mathcal{B}}
\newcommand{\viscK}{\mathcal{V}}
\newcommand{\mach}{M}
\newcommand{\de}{\tild{\den}}
\newcommand{\ve}{\tild{\vel}}
\newcommand{\pr}{\tild{\pre}}
\newcommand{\en}{\tild{\ent}}
\newcommand{\te}{\tild{T}}

\newcommand{\calT}{\mathcal{T}}
\newcommand{\calB}{\mathcal{B}}
\newcommand{\calU}{\mathcal{U}}
\newcommand{\lin}{M}
\newcommand{\dam}{J_{0}}
\newcommand{\laplu}{v}

\newcommand{\Ltwo}{L^2(\Omega)}
\newcommand{\Linf}{L^\infty(\Omega)}

\newtheorem{theorem}{Theorem}
\newtheorem{lemma}[theorem]{Lemma}

\newtheorem*{assumption*}{Assumptions}
\newtheorem{assumption}{Assumption}

\newtheorem{remark}{Remark}
\numberwithin{theorem}{section}
\numberwithin{equation}{section}
\makeatletter
\newcommand{\leqnomode}{\tagsleft@true}
\newcommand{\reqnomode}{\tagsleft@false}
\makeatother

\title{Well-posedness of a first-order formulation for fractionally damped nonlinear acoustics}

\author{Pascal Lehner\footnote{Department of Mathematics, University of Klagenfurt, Klagenfurt, 9020, Carinthia, Austria. E-mail: \href{mailto:Pascal.Lehner@aau.at}{Pascal.Lehner@aau.at}}, Mostafa Meliani\footnotemark[2]\footnote{Department of Mathematical Sciences, University of Bath, Bath, UK. E-mail: \href{mailto:mm4138@bath.ac.uk}{mm4138@bath.ac.uk}}
}
\date{}
\begin{document}
\maketitle
\begin{abstract}
In this work, we study the well-posedness of a quasilinear first-order-in-time system with memory arising in nonlinear acoustics. The model features a memory kernel describing the fractional damping and covers, as special cases, first-order formulations of Kuznetsov- and Westervelt-type equations. 

For completely monotone convolution kernels, we prove that the Westervelt-type system admits unique local-in-time solutions in bounded domains for space dimensions $d\leq3$, under homogeneous Dirichlet boundary conditions, suitable regularity, and smallness assumptions. The analysis is based on energy estimates exploiting novel nonlinear coercivity properties of the memory terms. Under additional assumptions on the resolvent of the kernel, we show that the smallness conditions can be significantly relaxed. For finite sums of exponential kernels our results generalize to the Kuznetsov-type system. 

Additionally, we show that the inviscid case (absence of memory kernel) in \(\R^d\) can be treated by standard hyperbolic arguments in the case of the Kuznetsov-type system.

\end{abstract}
\section{Introduction}
We study an initial boundary value problem for a quasilinear hyperbolic system with damping memory effects and establish well-posedness for special cases of the system modeling fractionally damped nonlinear acoustic waves. The general system of interest in this paper is
\begin{equation}\label{eq:main_system}
\left\{\begin{array}{ll}
A_0 u_t + \sum_{j=1}^d A_j(u,x) u_{x_j}  - \mathcal{J} \conv \Delta u = g &\mathrm{ on } \qquad (0,T) \times   \Omega,
\\
u(0) = u_0 &\mathrm{ on } \qquad \Omega,
\\
u = 0 &\mathrm{ on } \qquad (0,T) \times \partial \Omega,
\end{array}
\right.
\end{equation}
posed on the space–time cylinder $(0,T)\times\Omega$, where $T>0$ and $\Omega\subseteq \mathbb{R}^d$ is an open, sufficiently smooth domain. The unknown is $u=(p,v)^\transpose\in\R^{1+d}$, with scalar 
{pressure} $p$ and  
{velocity} vector
$v\in\R^d$. The matrix $A_0\in\R^{(1+d)\times(1+d)}$ is symmetric positive definite and independent of space and time, and the flux is encoded by
\begin{equation}\label{eq:Aj_matrices}
A_j(u,x) = A_j(p,v, x) \;=\; \begin{pmatrix} \vartheta v_j & (1+ \alpha(x) + \lambda p)e_j^\top \\ (1 + \beta(x) + \mu   p)e_j & \kappa e_j v^\top  \end{pmatrix} \in \R^{(1+d) \times (1+d)}, 
\end{equation}
where $e_j\in\R^d$ 
are the canonical $j$-th unit vectors for $1\leq j \leq d$. The parameters $\kappa,\lambda,\mu,\vartheta\in\mathbb{R}$ with \( \lambda \neq \vartheta \) govern the nonlinear couplings, while $\alpha,\beta$ are space-dependent coefficients. A physical choice of the parameters is described in Appendix \ref{se:derivation} in \eqref{eq:physical_parameters}.  The source $g:(0,T) \times \Omega \to \R^{1+d}$ and the initial datum $u_0: \Omega \to \R^{1+d}$ are given. The operator $\conv$ denotes convolution in time, i.e., $(\mathcal{J}\conv \Delta u)(t)=\int_0^t \mathcal{J}(t-s)\,\Delta u(s)\,ds$, with $\mathcal{J}\in L^1(0,T)^{1+d}$ a given memory kernel acting componentwise. 
For simplicity, we impose homogeneous Dirichlet boundary conditions $u=0$ on $(0,T)\times\partial\Omega$.

Observe that the matrix in \eqref{eq:Aj_matrices} is generally not symmetric, a property typically required for proving well-posedness of first-order hyperbolic systems, see \cite{Kato1975}. We overcome this difficulty by symmetrizing the system \eqref{eq:main_system} using a suitably chosen multiplicative matrix $S$; see Section~\ref{se:well-posedness_Kuznetsov} for details.
This symmetrization is crucial for our results when \( \mathcal{J} \in L^1(0,T)^{1+d} \).
In contrast, when $\mathcal{J}=\delta_0$ (with $\delta_0$ denoting the Dirac measure at $t=0$) well-posedness of the general system \eqref{eq:main_system} is shown in \cite{QuadraticWave} and relies on the parabolicity of the system exploiting regularity from the Laplacian \( - \Delta \).  For a general kernel $\mathcal{J} \in L^1(0,T)^{1+d}$, the difficulties in establishing the well-posedness of \eqref{eq:main_system} stem from the fact that less regularity can be inferred from the non-local damping term. More details are given in Section \ref{se:well-posedness}.

Within the above framework, we establish two types of well-posedness results.
\begin{itemize}
    \item Theorems \ref{th:nonlin-well} and \ref{th:nonlin-well-refined}, valid for the class of fluxes with $\mu=\kappa=0$ and for completely monotone kernels $\mathcal{J}$, state that the problem \eqref{eq:main_system} is well posed in a bounded domain for $d \leq 3$ under natural regularity assumptions together with a smallness condition on the data. This is the main part of the paper and the proofs use novel energy estimates (see Theorem \ref{thm:lin-well}) and nonlinear coercivity estimates for $\mathcal{J}$ (see Lemmas~\ref{lemma:kernel}, \ref{lemma:kernel2}).
    \item Theorem \ref{th:unbounded_domain} covers the inviscid case $\mathcal{J}=0$ with $\vartheta=\kappa$ on the whole space $\Omega=\R^d$ for arbitrary dimensions \( d \). It establishes local-in-time well-posedness relying on standard hyperbolic techniques developed in \cite{Kato1975}. 
\end{itemize}



The general system \eqref{eq:main_system} is linked to several classical models in (nonlinear) acoustics. Setting $\alpha=\beta=0$ yields a first-order-in-time formulation of Kuznetsov’s equation; see \cite[Rem.~2.4]{QuadraticWave}. If, in addition, $\mu=\kappa=0$, the system reduces to a Westervelt-type equation in first-order-in-time form, see Appendix \ref{se:derivation} for details. Moreover, taking $\lambda=\vartheta=0$ gives a linear fractionally damped wave equation. Finally, setting $\mathcal{J}=0$ leads to the first-order-in-time formulation of linear, lossless acoustic wave propagation
\begin{equation}\label{eq:main_system_linear2}
\left\{
\begin{aligned}
p_t + \divv v &= g_p,\\
 v_t + \grad p &= g_v.
\end{aligned}
\right.
\end{equation}

{In many applications, the acoustic damping and nonlinear coefficients are typically small but generally not negligible; see, e.g.,~\cite{Hamilton:1997} for a review of nonlinear acoustic wave propagation and the physical relevance of strongly damped nonlinear models such as Kuznetsov's or Westerwelt's equations. 
}

\subsection{Motivation of the model}
Classical nonlinear acoustics models are typically derived from the Navier–Stokes equations, complemented with constitutive relations such as a Newtonian fluid stress tensor for the acoustic medium and Fourier’s law of heat conduction; see, e.g., \cite{Hamilton:1997, Pierce:2019}. However, these assumptions imply infinite propagation speed and predict viscous losses that increase quadratically with frequency, which is inconsistent with observations in medical ultrasound \cite[\S 5.1]{Holm2019}. To address these shortcomings, two approaches have been developed. The first, introduced by Gurtin and Pipkin~\cite{GurtinPipkin1968}, modifies the heat flux, while the second incorporates power laws into the viscoelastic stress tensor; see, e.g., \cite{Holm2019}. 
These approaches lead to second-order nonlinear acoustic models featuring fractional damping terms as derived in, e.g., \cite{Holm2013,Prieur2011}.

Motivated by the suitability of first-order systems for the development of efficient numerical schemes, we consider the recently proposed model 
\[ 
\label{eq:first_order}
\left\{
\begin{aligned}
p_t + (1 + \alpha + \lambda p) \divv v - \mathcal{K} * \lapl p  + \grad p \cdot v &= 0, \\
v_t + (1 + \beta) \grad p - \mathcal{V} * \lapl v +  \half k \grad(|v|^2 - p^2) &= 0,
\end{aligned}
\right.
\]
with \( k \in \{0,1\} \),
which is derived and analyzed in \cite{QuadraticWave} for the case $\mathcal{K}=\mathcal{V}=\delta_0$. The system \eqref{eq:first_order} is in dimensionless form and can be obtained from the general system \eqref{eq:main_system} by setting $\kappa=\mu=k$ and $\vartheta=1$.  
A derivation of \eqref{eq:first_order} with \( \mathcal{K}, \mathcal{V} \neq \delta_0\) is provided in Appendix~\ref{se:derivation}.  
The model incorporates fractional damping through stress-strain relations and heat flux laws using separate memory kernels in the mathematical formulation. The kernel $\mathcal{K}$ corresponds to the heat conduction law, while $\mathcal{V}$ represents a generalized stress-strain relation modeling viscoelasticity.

\subsection{Related mathematical literature}
The well-posedness analysis of the fractionally damped Westervelt equation in pressure form (which can be derived from \eqref{eq:main_system} by setting $\alpha=\beta=\mu=\kappa=0$, see \cite[Rem.~2.4]{QuadraticWave}) 
\[ 
\label{eq:fractional_westervelt}
p_{tt} - \Delta p - \mathcal{K} * \Delta p_t = \lambda (p^2)_{tt}
\]
has only recently been established; see \cite{BakerBanjaiPtashnyk2024, KaltenbacherRundell2022} for Abel kernels (defined in \eqref{eq:frac_kernel}) and \cite{kaltenbacher2024limiting} for more general kernels. 
Here, $\mathcal{K}$ is a suitable time-dependent memory kernel, and $\lambda>0$ is a nonlinearity parameter. In \cite{kaltenbacher2024limiting}, a limiting analysis as $\mathcal{K} \to 0$ is performed, recovering the lossless Westervelt model. Fractional versions of Kuznetsov and Blackstock were studied in \cite{KaltenbacherMelianiNikolic2024}. 

Another related model is the fractional Jordan–Moore–Gibson–Thompson equation, arising from incorporating thermally relaxed heat flux laws of Maxwell--Cattaneo and Compte--Metzler type~\cite{compte1997, MaxwellCattaneo}. These models were first derived in \cite{KaltenbacherNikolic2022} and their well-posedness and singular-limit analyses have been carried out in~\cite{frac_tau2zero_PartII,KaltenbacherNikolic2022, meliani2023unified}. A global existence result has been recently established in \cite{meliani2025global} exploiting the concept of strongly positive kernels similarly to \cite{cannarsa2011integro,Okada2021}. 

In the broader acoustics and viscoelastic wave literature, much work has been done in the study of hyperbolic (second-order and third-order) equations with smooth integrable kernels; 
we refer the reader to, e.g., \cite{conti2023moore,Dharmawardane2011,lasiecka2017global}.
In contrast, the present work relies on complete monotonicity assumptions (see Assumptions~\ref{as:kernel}--\ref{as:kernel2} below), enabling us to cover large families of relevant kernels for applications; see Section~\ref{sec:coverd_kernels}.

While the aforementioned well-posedness results for fractionally damped nonlinear acoustics models concern second-order-in-time (e.g., Westervelt or Kuznetsov) and third-order-in-time (Jordan–Moore–Gibson–Thompson) equations, fewer results exist for first-order formulations.
To the best of our knowledge, the only such result is due to \cite{cox:2024}, where well-posedness is established for a space-fractionally damped first-order nonlinear acoustic model.
Our work seeks to fill this gap in the analysis for first-order time-fractional models.

\subsection{Examples of relevant kernels}\label{sec:coverd_kernels}
Although the Abel kernel of order $\alpha \in (0,1)$,
\begin{equation}\label{eq:frac_kernel}
\mathcal{J}: (0, \infty) \to \R, \quad \mathcal{J}(t) = \frac1{\Gamma(\alpha)} t^{\alpha-1},
\end{equation}
is our main example of interest as it is used in the definition of Caputo--Djrabashyan fractional derivatives, see \cite{podlubny1998fractional}, the assumptions we impose on the kernels  (see Assumptions~\ref{as:kernel}--\ref{as:kernel2} below) are sufficiently general to include a broad class of kernels. In particular, they cover kernels arising in the modeling of acoustic and viscoelastic phenomena such as
\begin{itemize}
\item the exponential kernel 
\begin{equation}\label{Exponential_Kernel}
\mathcal{J}(t) = \beta \mathrm{e}^{-\beta t}, \qquad \beta>0.
\end{equation}
\item the exponentially regularized Abel kernel, along the lines of that found in~\cite{messaoudi2007global},
\begin{equation}\label{eq:reg_abel_kernel}
\mathcal{J}(t) =\frac{t^{\alpha-1}\mathrm{e}^{-\beta t}}{\Gamma(\alpha)} , \qquad  0<\alpha < 1, \, \beta > 0. 
\end{equation}
\item the fractional Mittag-Leffler kernels, encountered in the study of fractional second order wave equations in complex media in~\cite{kaltenbacher2024limiting},
\begin{equation}\label{eq:mittag_leffler}
\mathcal{J}(t) =\frac{t^{\beta-1}}{\Gamma(1-\alpha)}E_{\alpha,\, \beta}\left(-t^\alpha\right), \qquad   0< \alpha \, { \leq }\, \beta < 1, 
\end{equation} 
where $E_{\alpha, \, \beta}(t)=\sum_{k=0}^\infty \frac{t^k}{\Gamma(\alpha k + \beta)}$ is the two-parametric Mittag-Leffler function. 
\item the polynomially decaying kernel 
\begin{equation}\label{Polynomial_Kernel}
\mathcal{J}(t) = \frac{1}{(1+t)^{q}}, \qquad q>0
\end{equation}
arising in viscoelasticity, see, e.g.,~\cite{munoz1996decay}.
\end{itemize}
Here, $\Gamma: (0, \infty) \to \R, \, t \mapsto \int_0^\infty s^{t-1} \e^{-s} \, \ds$ stands for the Gamma function. Note that all kernels should be scaled in a physically meaningful way.

\subsection{Main results}
We state our well-posedness results under the hypothesis that the vector-valued memory kernel $\mathcal{J}$ is completely monotone on $(0,T)$ in the sense that
\begin{equation}\label{def::complete_monot}
(-1)^n \frac{d^n}{dt^n}\mathcal{J}(t) \ge 0 \qquad \text{for all } n \in \mathbb{N}_0 \text{ and a.e.\ } t \in (0,T),
\end{equation}
where the inequality is understood componentwise and derivatives may be taken in the distributional sense. {Informally, complete monotonicity means that $\mathcal{J}$ is non-negative and non-increasing, decaying rapidly at short times and more slowly at large times.}

We formulate two nested sets of assumptions, leading to two distinct well-posedness theorems. Under the first (weaker) assumption, well-posedness is obtained at the expense of stronger smallness conditions on the data. The second (stronger) assumption strictly refines the first and allows us to relax these smallness requirements. The assumptions are as follows.
\begin{assumption}\label{as:kernel}
The kernel \(\mathcal{J} : (0, T) \to \R^{1+d} \) in \eqref{eq:main_system} lies in \(L^1(0,T)^{1+d}\), is completely monotone, and {non-constant} or \( \mathcal{J} = 0 \).
\end{assumption}
\begin{assumption}\label{as:kernel2}
Let Assumption~\ref{as:kernel} hold.  In addition if \( \mathcal{J} \neq 0\), suppose that there exist $T>0$ and $q>1$ such that the resolvent of first kind of $\mathcal{J}$ defined by 
\[
(\mathfrak{r} * \mathcal{J})(t)
= 1
\qquad \text{for a.e.\ } t \in (0,T)
\]
lies in $L^q(0,T)^{1+d}$. 
\end{assumption}
We note here that the resolvent $\mathfrak{r}$ of a completely monotone kernel is in general a Radon measure, 
see e.g., \cite[Ch. 5]{Gripenberg1990} and the discussion in Appendix~\ref{se:kernel}. The assumption on the regularity of the resolvent associated with the kernel ensures that $\mathcal{J}$ possesses a sufficiently strong (yet integrable) singularity at $t=0$. 

Note that the before mentioned kernels \eqref{eq:frac_kernel} to \eqref{Polynomial_Kernel} and \(\mathcal{J}=0\) all fulfill Assumption \ref{as:kernel} within the given parameter range.  For further details, see \cite[Cor. 3.2]{jin2021fractional} concerning the two parametric Mittag-Leffler functions. Assumption \ref{as:kernel2} is satisfied for the (regularized) Abel and Mittag-Leffler kernels \eqref{eq:frac_kernel}, \eqref{eq:reg_abel_kernel}, and \eqref{eq:mittag_leffler}. It does, however, not hold for the exponential \eqref{Exponential_Kernel} or polynomially decaying \eqref{Polynomial_Kernel} kernels, since their resolvents contain a Dirac pulse \(\delta
_0\) due to the absence of singularity at $0$; see \cite[Ch. 5, Thm. 5.4]{Gripenberg1990}.

For the well-posedness analysis, we rely on nonlinear coercivity of $\mathcal{J}\in L^1(0,T)^{1+d}$ in the following sense. We denote as $\lambda_{\min}$ the smallest eigenvalue of a matrix. 
\begin{lemma} \label{lemma:kernel}
Let $V \in W^{1, \infty}\! \left(0,T;L^\infty(\Omega;\R^{(1+d)\times(1+d)})\right)$ be a diagonal and positive definite matrix. Assume that there exists a constant $C_K>0$, possibly depending on $\mathcal{J},T$, such that
\[ \label{eq:kerlem_assump}
\|\dt V\|_{L^\infty(0,T;L^\infty(\Omega;\R^{(1+d)\times(1+d)}))}  < C_K \inf_{ {s \in (0,T), \, x \in \Omega}} \lambda_{min}(V)(s,x) .
\]
If the kernel $\mathcal{J}$ satisfies Assumption \ref{as:kernel}, then for every $y \in L^2 \! \left(0,T;L^2(\Omega;\R^{1+d})\right)$ and a.e. $t \in (0,T)$ the estimate
\begin{equation}\label{eq:kernel_assump}
\begin{aligned}
\int_0^t ( V(s) \big( \mathcal{J} * y \big)(s) , y(s) )_{L^2(\Omega;\R^{1+d})} \ds  \geq C_{\mathcal{J}} \int_0^t  \| \big( \mathcal{J} * y \big)(s) \|^2_{L^2(\Omega;\R^{1+d})}  \ds 
\end{aligned}
\end{equation}
holds, where $C_{\mathcal{J}}>0$ depends on $\Omega, T, V,$ and $\mathcal{J}$. 
\end{lemma}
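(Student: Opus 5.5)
The plan is to convert the statement into a coercivity estimate for the resolvent of first kind. Since $V$ is diagonal and $\mathcal{J}$ acts componentwise, it suffices to treat one scalar component. So let $\mathcal{J}$ be scalar, completely monotone, nonconstant and in $L^1(0,T)$, with $V=v(s,x)>0$ scalar. If $\mathcal{J}=0$ there is nothing to prove. Set $w:=\mathcal{J}*y\in L^2(0,T;\Ltwo)$.

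\textbf{Step 1: invert the convolution.} By the theory of completely monotone kernels (\cite[Ch.~5]{Gripenberg1990}, Appendix~\ref{se:kernel}), $\mathcal{J}$ has a resolvent of first kind that is a positive measure of the form
\[
\mathfrak{r}=a\,\delta_0+k,\qquad a\ge 0,
\]
where $k$ is nonnegative, nonincreasing and locally integrable. Convolving $w=\mathcal{J}*y$ with $\mathfrak{r}$ gives $\mathfrak{r}*w=1*y$, hence
\[
y=\partial_t(\mathfrak{r}*w)=a\,\partial_t w+\partial_t(k*w).
\]
For a rigorous argument I first take $y$ smooth, and for $a>0$ with $w(0)=0$; density then extends the final inequality to $L^2$, because both sides are continuous in $y$ in the $L^2$ norm.

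\textbf{Step 2: estimate the two parts separately.} For the local part, integration by parts gives
\[
\int_0^t (v\,a\,w_t,w)\ds=\tfrac a2\|\sqrt{v(t)}\,w(t)\|^2-\tfrac a2\int_0^t(\partial_t v\,w,w)\ds\ \ge\ -\tfrac a2\int_0^t\|\partial_tv\|_{L^\infty}\|w\|^2\ds.
\]
For the nonlocal part I use the standard identity for nonincreasing nonnegative kernels, $(\partial_t(k*w))w=\tfrac12\partial_t(k*w^2)+\tfrac12k(s)w^2+\tfrac12\int_0^s[-\dot k(\sigma)]\,(w(s)-w(s-\sigma))^2\,d\sigma$. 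After multiplying by $v$ and integrating in space and time, only the first term is not sign-definite. Integrating by parts in time moves the derivative onto $v$ and yields
\[
\int_0^t (v\,\partial_t(k*w),w)\ds\ \ge\ \tfrac12\int_0^t k(s)\inf v\,\|w(s)\|^2\ds-\tfrac12\|\partial_tv\|_{L^\infty}\int_0^t (k*\|w\|^2)(s)\ds .
\]
The last integral is bounded by $\|k\|_{L^1(0,T)}\int_0^t\|w\|^2\ds$ by Young's inequality. Writing $k(T)$ for the limit $k(T^-)$, monotonicity gives $k(s)\ge k(T)$ on $(0,T)$.

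\textbf{Step 3: absorb the error terms.} Combining the two parts gives
\[
\int_0^t (v w,y)\ds\ \ge\ \Big(\tfrac12 k(T)\inf v-\tfrac12\big(a+\|k\|_{L^1(0,T)}\big)\|\partial_tv\|_{L^\infty}\Big)\int_0^t\|w\|^2\ds .
\]
This yields \eqref{eq:kernel_assump} with $C_{\mathcal{J}}>0$ provided $k(T)>0$. We then choose $C_K:=k(T)/(a+\|k\|_{L^1(0,T)})$ in \eqref{eq:kerlem_assump}, taking the minimum over components.

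\textbf{Main obstacle: showing $k(T)>0$.} This is where the hypothesis ``nonconstant'' must enter, and where the structure of the resolvent in \cite[Ch.~5]{Gripenberg1990} is really needed. If $k$ vanished on a subinterval, then $\mathfrak{r}*\mathcal{J}=1$ together with complete monotonicity would force a degenerate (constant) kernel. I would first try to prove this by a Laplace-transform argument. The key facts are $\hat{\mathfrak r}(z)=1/(z\hat{\mathcal{J}}(z))$ and that $z\hat{\mathcal J}$ is a complete Bernstein function. Consequently $\hat{\mathfrak r}$ is a Stieltjes function, so $k$ is itself completely monotone. A completely monotone function vanishing at one point vanishes identically, and $k\equiv0$ is ruled out unless $\mathcal{J}$ is constant.

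A secondary technical point is that the integration-by-parts manipulations must be justified for merely $L^2$ data. I would handle this by regularizing $y$ in time and passing to the limit, using the continuity in $y$ noted in Step 1.
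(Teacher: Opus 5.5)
Your proposal is correct and follows essentially the paper's route: invert the convolution with the resolvent $\mathfrak r = a\delta_0 + k$, treat the Dirac part by integration by parts, and apply the monotone-kernel inequality componentwise (using that $V$ is diagonal) with the time derivative moved onto $V$; the error is then controlled by Young's inequality with $\|k\|_{L^1(0,T)}$, and $\inf_{(0,T)} k>0$ follows from non-constancy, which yields the same threshold $C_K = \inf k/(a+\|k\|_{L^1(0,T)})$. The differences are minor: you take the coercive term from $\tfrac12 k(s)\|w(s)\|^2$ in the exact identity rather than from the terminal term $\tfrac12 (V(t)\mathbf 1, (k*(w\odot w))(t))$, which also avoids the paper's regularization $k^{(n)}$, and you prove $k(T)>0$ self-containedly via Stieltjes and complete Bernstein functions where the paper only cites a known result.
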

\begin{proof}
See Appendix~\ref{sec:proof34}.
\end{proof}
We note that for constant matrices \(V\), the (linear) coercivity estimate is shown for completely monotone kernels in \cite{kaltenbacher2024limiting}. 

Using the more restrictive Assumption~\ref{as:kernel2}, the estimate \eqref{eq:kernel_assump} can be obtained without requiring a smallness condition on the time derivative of $V$ relative to the smallest eigenvalue of $V$, see \eqref{eq:kerlem_assump}, provided that the time horizon $T>0$ is chosen sufficiently small.
\begin{lemma} \label{lemma:kernel2}
{Let $V \in W^{1, \infty}\! \left(0,T;L^\infty(\Omega;\R^{(1+d)\times(1+d)})\right)$ be a diagonal and positive definite matrix and let the kernel $\mathcal{J}$ satisfy Assumption \ref{as:kernel2}}, then there exists a $T'>0$ such that for every $y \in L^2 \! \left(0,T';L^2(\Omega;\R^{1+d})\right)$ and a.e. $t \in (0,T')$ the estimate \eqref{eq:kernel_assump} holds. 
\end{lemma}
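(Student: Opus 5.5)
The key idea is to set $w := \mathcal{J} * y$ and to rewrite $y$ through the resolvent: since $\mathfrak{r} * \mathcal{J} = 1$, we have $1 * y = \mathfrak{r} * w$, and hence $y = \partial_t(\mathfrak{r} * w)$. Because $\mathcal{J}$ acts componentwise and $V$ is diagonal, the estimate decouples into scalar estimates, one for each component $i$ with weight $V_{ii}(s,x)$. It therefore suffices to prove, for a scalar completely monotone $k$ with resolvent $r \in L^q(0,T)$, $q>1$, and a scalar weight $v(s,x) \ge v_0 := \inf \lambda_{\min}(V) > 0$,
\begin{equation*}
\int_0^t \big(v(s)\, w(s), \partial_s (r * w)(s)\big)_{L^2(\Omega)} \ds \;\geq\; C\int_0^t \|w(s)\|^2_{L^2(\Omega)} \ds .
\end{equation*}
A density argument lets us work with smooth $y$ throughout.

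The plan is to split off the constant-coefficient part. Write $v(s)=v(t_0)+(v(s)-v(t_0))$, or more conveniently freeze $v$ at time $0$, so that $|v(s)-v(0)|\le s\|\partial_t V\|_{L^\infty} \le T'\|\partial_t V\|_{L^\infty}$. For the frozen part, use the structure of resolvents of completely monotone kernels: by \cite[Ch.~5]{Gripenberg1990}, $r$ has the form $r = r_\infty\delta_0 + \tilde r$ with $\tilde r$ completely monotone, and Assumption~\ref{as:kernel2} rules out the Dirac part, so $r_\infty=0$. The quantity $\int_0^t (v(0) w, \partial_s(r*w))\ds$ is then treated by the classical identity for $\partial_s(r*w)$ with nonincreasing $r$: $w\,\partial_s(r*w) = \tfrac12 \partial_s (r*|w|^2) + \tfrac12 r(s)|w(s)|^2 + \tfrac12\int_0^s (-r'(\sigma))|w(s)-w(s-\sigma)|^2 d\sigma$ (the standard Alikhanov/Zacher-type identity). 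This yields a lower bound of the form $\tfrac12\int_0^t r(s)\|\sqrt{v(0)}w(s)\|^2\ds \ge \tfrac{v_0}{2}\, \inf_{(0,T)} r \int_0^t\|w\|^2 \ds$, where $r$ is positive and nonincreasing, so $\inf_{(0,T)} r = r(T)>0$.

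The main obstacle is the perturbation term $\int_0^t ((v(s)-v(0))w, \partial_s(r*w))\ds$, because $\partial_s(r*w)$ is not controlled by $w$ in $L^2$ alone. To handle it, integrate by parts in time. This moves the derivative onto $(v(s)-v(0))w$, which produces $\partial_s v\, w\,(r*w)$, a boundary term $(v(t)-v(0))w(t)(r*w)(t)$, and a term containing $\partial_s w$. Because the last two are problematic, a better route is to apply the same energy identity with the variable weight $v(s)$ directly; the only extra term is then $-\tfrac12\int_0^t \partial_s v\,(r*|w|^2)\ds$, and the boundary term $\tfrac12 v(t)(r*|w|^2)(t)\ge0$ is harmless. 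Young's inequality gives $\|r*|w|^2\|_{L^1(0,t)} \le \|r\|_{L^1(0,t)}\|w\|^2_{L^2(0,t;L^2)}$, so this term is bounded by $\tfrac12\|\partial_tV\|_{L^\infty}\|r\|_{L^1(0,T')}\int_0^t\|w\|^2$. Since $r\in L^q$ with $q>1$, Hölder's inequality gives $\|r\|_{L^1(0,T')}\le T'^{1-1/q}\|r\|_{L^q}\to0$, whereas the good term $v_0\, r(T')$ does not shrink. Indeed, $r(T')$ grows as $T'\to 0$, and this is exactly what the singularity of $\mathcal{J}$ at $0$ provides. Choosing $T'$ small makes the bad term absorbable, which gives the estimate with $C_{\mathcal{J}} = \tfrac12\big(v_0 r(T') - \|\partial_tV\|\,\|r\|_{L^1(0,T')}\big)>0$. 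The step needing the most care is justifying the identity and the positivity of $\tilde r$ when $r$ is only $L^q$, which I would do by regularizing $r$ (e.g.\ via $r_\varepsilon = r(\cdot+\varepsilon)$) and passing to the limit. The case $\mathcal{J}=0$ is trivial.
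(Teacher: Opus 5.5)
Your proposal is correct and follows essentially the paper's route. The shared steps are: writing $y=\partial_t(\mathfrak r*w)$ with $w=\mathcal J*y$; using Assumption~\ref{as:kernel2} to remove the Dirac part of $\mathfrak r$; applying the pointwise Alikhanov/Zacher-type inequality componentwise, weighted by the diagonal $V$; integrating by parts in time, which produces the $\partial_t V$ term; and bounding that term by Young plus H\"older with the factor $T'^{1-1/q}\|\mathfrak r\|_{L^q}$, which is absorbed for small $T'$.

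The only difference is cosmetic. You take the coercive term from $\tfrac12\int_0^t V\,\mathfrak r\,|w|^2$ and discard the endpoint term $\tfrac12\big(V(t)\mathbf 1,(\mathfrak r*(w\odot w))(t)\big)_{L^2}$, while the paper does the opposite. Both yield the same lower bound $\tfrac{V_0}{2}\inf_{(0,T)}\mathfrak r\int_0^t\|w\|^2_{L^2}$.
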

\begin{proof}
    See Appendix~\ref{sec:proof35}.
\end{proof}
\begin{remark}
The assumption that the matrix \(V\) is diagonal is essential in the proof of Lemma \ref{lemma:kernel}. In particular, this ultimately restricts our well-posedness result to the Westervelt-type system, see Theorems \ref{th:nonlin-well}, \ref{th:nonlin-well-refined}. A discussion of this requirement is provided in Appendix \ref{se:appendix_lemma_assumptions}, where we prove Lemma \ref{lemma:kernel} for non-diagonal matrices in the case of an exponential kernel; see Appendix~ \ref{se:appendix_kernel_exponential_proof}. 
\end{remark}

To state the main results, we introduce the following shorthand notation of the relevant function spaces
\[ 
\begin{aligned}
H_\diamondsuit^2(\Omega;\R^m) &= H^2(\Omega;\R^m) \cap H^1_0(\Omega;\R^m), \\
H_\diamondsuit^3(\Omega;\R^m)  &=  \left\{u \in H^3(\Omega;\R^m) \cap H_0^1(\Omega;\R^m)\ : \ \Delta u|_{\partial\Omega} = 0\right\}, \\
H_\diamondsuit^4(\Omega;\R^m) &= H^4(\Omega;\R^m) \cap H_\diamondsuit^3(\Omega;\R^m), \\
H_\diamondsuit^5(\Omega;\R^m)  &=  \left\{u \in H^5(\Omega;\R^m) \cap H_\diamondsuit^3(\Omega;\R^m) \ : \ \Delta^2 u|_{\partial\Omega}  = 0\right\}.
\end{aligned}
\]
We define the solution space by
\begin{equation}
\mathcal{U}_{\diamondsuit}\! \left(0,T\right) = L^\infty \! \left(0,T;H_\diamondsuit^4(\Omega;\R^{1+d})   \right) \, \cap\,  H^1 \! \left(0,T; {H_\diamondsuit^3(\Omega;\R^{1+d}) }\right) {\, \cap\, W^{1, \infty} \! \left(0,T;H_\diamondsuit^2(\Omega;\R^{1+d}) \right)} 
\end{equation}
and the analog space without boundary conditions
\begin{equation}
\mathcal{U}\! \left(0,T\right) = L^\infty \! \left(0,T; H^4(\Omega;\R^{1+d})   \right) \, \cap\,  H^1 \! \left(0,T; H^3(\Omega;\R^{1+d})\right) {\, \cap\, W^{1, \infty} \! \left(0,T;H^2(\Omega;\R^{1+d})\right)} 
.
\end{equation}
\begin{theorem}[General Westervelt-type case]\label{th:nonlin-well}
Let $\mu=\kappa=0$ in \eqref{eq:Aj_matrices}. Let $T>0$ and let {$\Omega\subset \R^d$}, $d \leq 3$, be a bounded domain with boundary of class $C^{4,1}$. Assume $\mathcal{J} \in L^1(0,T)^{1+d}$ satisfies Assumption \ref{as:kernel} and
\[
u_0 \in {H_\diamondsuit^4(\Omega;\R^{1+d})}, \quad {g \in \mathcal{V}(0,T) := L^1 \! \left(0,T;H_\diamondsuit^4(\Omega;\R^{1+d})\right)
\cap L^{\infty}\! \left(0,T;H_\diamondsuit^2(\Omega;\R^{1+d}) \right)},
\]
and $\alpha, \beta \in H^{4}(\Omega)$ are as in~\eqref{eq:Aj_matrices} with $\| \alpha \|_{L^\infty(\Omega)}, \| \beta \|_{L^\infty(\Omega)}<1$. Assume there exists $r_0>0$ sufficiently small such that 
\[ \label{eq:smallness_1}
\| u_0 \|_{H_\diamondsuit^4(\Omega;\R^{1+d})} + \| g \|_{\mathcal{V}(0,T)} \leq r_0 .
\]
Then there exists a $T'>0$ such that equation~\eqref{eq:main_system} has a unique solution 
$u \in \mathcal{U}_\diamondsuit(0,T')$ with \emph{a priori} estimate 
\[ \label{eq:apriori_nonlinear}
\| u \|_{\calU(0,T')} \leq C \left( \| u_0 \|_{H_\diamondsuit^4(\Omega;\R^{1+d})} + \| g \|_{\mathcal{V}(0,T)} \right)
\]
for a constant $C>0$ that depends only on $A_j, A_0, \mathcal{J}, T, \Omega, d$.
\end{theorem}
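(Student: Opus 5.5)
The proof will go through a Banach fixed-point argument built on a linearization of \eqref{eq:main_system}. First the system is symmetrized. With $\mu=\kappa=0$ the flux is
\[
A_j(u,x)=\begin{pmatrix}\vartheta v_j & (1+\alpha+\lambda p)e_j^\top\\ (1+\beta)e_j & 0\end{pmatrix}.
\]
We multiply from the left by the diagonal matrix
\[
S(p,x)=\operatorname{diag}\bigl(1+\beta,\;(1+\alpha+\lambda p)\,I_d\bigr),
\]
so that $SA_j$ becomes symmetric. The matrix $S$ is positive definite provided $\|\alpha\|_{L^\infty},\|\beta\|_{L^\infty}<1$ and $|\lambda|\,\|p\|_{L^\infty}$ is small; the latter follows from the smallness $r_0$ and the embedding $H^2\embed L^\infty$ for $d\le3$. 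We also take $A_0$ diagonal after this scaling, or absorb it into $S$. The key structural point is that $S$ is diagonal, so the memory term $S\,\mathcal{J}\conv\Delta u$ has exactly the form handled by Lemma~\ref{lemma:kernel}, with $V=S$.

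Next, for a fixed $w$ in a ball $B_R\subset\calU_\diamondsuit(0,T')$, with $w(0)=u_0$, we consider the linear problem
\[
S(w)A_0u_t+\sum_j S(w)A_j(w,x)u_{x_j}-S(w)\,\mathcal{J}\conv\Delta u=S(w)g,
\]
and prove the linear well-posedness result (Theorem~\ref{thm:lin-well}) by Galerkin approximation in Dirichlet Laplacian eigenfunctions. The compatibility spaces $H^k_\diamondsuit$ are chosen precisely so that $\Delta$ and $\Delta^2$ commute with the boundary conditions. For the energy estimates we test with $-\Delta u$, $\Delta^2 u$ and $-\Delta^3 u$; equivalently, we apply $\Delta^k$ to the equation and test with $\Delta^k u$ for $k=0,1,2$. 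Each time:
\begin{itemize}
\item The symmetric hyperbolic part gives $\tfrac12\tfrac{d}{dt}(SA_0\Delta^k u,\Delta^k u)$, up to terms involving $\dt S$ and $\nabla(SA_j)$. These are bounded through $\|w\|_{\calU}$ and handled by Gronwall.
\item The memory term is bounded below by Lemma~\ref{lemma:kernel} with $y=\Delta^{k+1}u$, which yields a coercive contribution $C_{\mathcal{J}}\int_0^t\|\mathcal{J}\conv\Delta^{k+1}u\|^2$.
\item Commutators $[\Delta^k,S]$ acting on the memory term are absorbed via Young's inequality into this coercive contribution and the energy.
\end{itemize}
To get the $H^1(0,T;H^3)$ and $W^{1,\infty}(0,T;H^2)$ bounds we read $u_t$ off the equation, which requires controlling $\mathcal{J}\conv\Delta u$ in $H^3$. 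This comes from the coercive term at level $k=2$. It is also where the condition $\alpha,\beta\in H^4$ enters.

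The main obstacle is verifying hypothesis \eqref{eq:kerlem_assump} uniformly along the iteration. We need
\[
\|\dt S\|_{L^\infty L^\infty}=|\lambda|\,\|w_t\|_{L^\infty(0,T';L^\infty)}\lesssim\|w\|_{W^{1,\infty}H^2}\le R
\]
to be smaller than $C_K\inf\lambda_{\min}(S)\gtrsim C_K\bigl(1-\max(\|\alpha\|_\infty,\|\beta\|_\infty)-|\lambda|R\bigr)$. This is exactly why smallness of $r_0$, and hence of $R\sim Cr_0$, is required; shortening $T'$ does not help, since $w_t$ is not small in $T'$. We also have to keep $C_{\mathcal{J}}$ uniform over $w\in B_R$, which we would verify by tracking its dependence in the appendix proof of Lemma~\ref{lemma:kernel}.

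Finally, we define $\mathcal{T}w=u$. The linear a priori bound gives $\|u\|_{\calU}\le C(\|u_0\|_{H^4}+\|g\|_{\mathcal{V}})\,e^{C(R)T'}$, so $\mathcal{T}$ maps $B_R$ into itself once $R=2Cr_0$ and $T'$ is small. Contraction is proved in a weaker norm, $L^\infty(0,T';L^2)\cap W^{1,\infty}(0,T';H^{-1})$ or simply at the $L^2$ energy level. For this we subtract the equations for $u_1,u_2$, test with $u_1-u_2$, use Lemma~\ref{lemma:kernel} again for the memory term, and bound the difference terms $(S(w_1)-S(w_2))(\cdots)$ by $\|w_1-w_2\|_{L^2}$ times $H^4$-norms of $u_i$. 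The ball $B_R$ is closed in this weaker topology by weak-$*$ lower semicontinuity, so Banach's theorem gives a fixed point, which is the solution. Uniqueness follows from the same difference estimate, and the a priori estimate \eqref{eq:apriori_nonlinear} is inherited from the linear bound.
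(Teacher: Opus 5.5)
Your scheme matches the paper's. You use a diagonal symmetrizer (yours is $(1+\beta)S_W$), Galerkin approximation in Dirichlet eigenfunctions, and Lemma~\ref{lemma:kernel} with $V=S$ for the memory term. You keep \eqref{eq:kerlem_assump} valid along the iteration through smallness of $r_0$; you are right that shrinking $T'$ cannot help there, which is what Lemma~\ref{lemma:kernel2} is for. Finally you prove contraction in $L^\infty(0,T';L^2(\Omega))$ on a weak-$\star$ closed ball.

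There are two local slips. First, testing with $-\Delta u$, $\Delta^2u$, $-\Delta^3u$ stops at the $H^3$ level. It is not the same as applying $\Delta^k$ and testing with $\Delta^ku$, which means testing with $\Delta^{2k}u$, and you need $\Delta^4u$. Second, at that level the memory term $-(S\,\mathcal{J}\conv\Delta^3u,\Delta^2u)_2$ fits Lemma~\ref{lemma:kernel} only after one more integration by parts, with $y=\nabla\Delta^2u$ rather than $y=\Delta^3u$. This still gives the $H^3$ bound on $\mathcal{J}\conv\Delta u$ that you need for $u_t$.

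The genuine gap is the boundary terms. The spaces $H^k_\diamondsuit$ kill the traces of $u$, $u_t$, $\mathcal{J}\conv\Delta u$ and $f$, but not that of the flux. Since $u=0$ on $\partial\Omega$, there $\sum_jM_ju_{x_j}=\bigl(\sum_jn_jM_j\bigr)\partial_nu\neq0$. Hence you cannot apply $\Delta^k$ to the Galerkin equation, because the projection onto $U_N$ commutes with $\Delta$ only on functions with zero trace. Testing with $\Delta^4u^N$ then leaves integrals such as $\int_{\partial\Omega}\Delta\bigl(M_0u_t+\sum_jM_ju_{x_j}-J_0\,\mathcal{J}\conv\Delta u\bigr)\cdot\partial_n\Delta^2u\dS$. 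The paper removes these with \eqref{ineq:bnd_value_convection_term}--\eqref{ineq:bnd_value_convection_term_lapl} from Appendix~\ref{se:appendix_galerkin}, but that derivation is circular. For $\phi^N\in U_N$ one has $(\Delta r^N,\phi^N)_2=\int_{\partial\Omega}r^N\cdot\partial_n\phi^N\dS$. Differentiating $r^N=\sum_{K>N}r_Kw_K$ termwise presupposes $r^N|_{\partial\Omega}=0$, which is exactly the claim.

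The obstruction is not merely technical. Suppose $u\in\calU_\diamondsuit(0,T')$ solves \eqref{eq:main_system}. For a.e.\ $t$ every term lies in $H^2(\Omega)$, and $u_t$, $\mathcal{J}\conv\Delta u$, $g$ have zero trace. So $\sum_jA_j(u,x)u_{x_j}$, which equals $\bigl((1+\alpha)\nabla\cdot v,(1+\beta)\nabla p\bigr)$ on $\partial\Omega$, must vanish there. Since $u\in C([0,T'];H^3(\Omega))$ with $u(0)=u_0$, this forces $\nabla p_0=0$ on $\partial\Omega$.

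Now take $g=0$ and $u_0=(\varepsilon w_1,0)$, where $w_1>0$ is the first Dirichlet eigenfunction. It lies in $H^4_\diamondsuit(\Omega)$ and satisfies $\int_{\partial\Omega}\partial_nw_1\dS=\int_\Omega\Delta w_1\dxx<0$. So no solution in $\calU_\diamondsuit$ exists, however small $\varepsilon$ is. No sharper integration by parts will close this step. The functional setting itself has to change, in particular the condition $\Delta u|_{\partial\Omega}=0$ built into $\calU_\diamondsuit$, and the resulting boundary terms then have to be genuinely estimated.
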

For the next result, we require greater regularity of the resolvent of $\mathcal{J}$, which allows us to relax the smallness conditions of Theorem~\ref{th:nonlin-well}. Heuristically, more regular resolvents are associated with more singular kernels, as can be seen from the limiting case that the resolvent of the Dirac delta $\delta_0$ is the constant function 1.

In particular, we show that, as long as the initial pressure $p_0$ is below an explicit threshold defined by the nonlinearity $\lambda$, the system can be evolved for a short time.
\begin{theorem}[More singular kernel Westervelt-type case]\label{th:nonlin-well-refined}
Let $\mu=\kappa=0$ in \eqref{eq:Aj_matrices}. Let $T>0$ and let {$\Omega\subset \R^d$}, $d \leq 3$, be a bounded domain with boundary of class $C^{4,1}$. Assume $\mathcal{J} \in L^1(0,T)^{1+d}$ satisfies Assumption \ref{as:kernel2} and
\[
u_0=(p_0,v_0) \in {H_\diamondsuit^4(\Omega;\R^{1+d})}, \quad {g \in \mathcal{V}(0,T) := L^1 \! \left(0,T;H_\diamondsuit^4(\Omega;\R^{1+d})\right)
\cap L^{\infty}\! \left(0,T;H_\diamondsuit^2(\Omega;\R^{1+d}) \right)},
\]
and $\alpha, \beta \in H^{4}(\Omega)$ are as in~\eqref{eq:Aj_matrices} with $\| \alpha \|_{L^\infty(\Omega)}, \| \beta \|_{L^\infty(\Omega)}<1$, assume 
\[ \label{eq:refine_theorem_assumption}
\|p_0 \|_{L^\infty(\Omega;\R^{1+d})} < \frac{1}{2\lambda } (1- \| \alpha \|_{\Linf}) .
\]
Then there exists a $T'>0$ such that equation~\eqref{eq:main_system} has a unique solution 
$u \in \mathcal{U}_\diamondsuit(0,T')$ with \emph{a priori} estimate 
\[ \label{eq:apriori_nonlinear}
\| u \|_{\calU(0,T')} \leq C \left( \| u_0 \|_{H_\diamondsuit^4(\Omega;\R^{1+d})} + \| g \|_{\mathcal{V}(0,T)} \right)
\]
for a constant $C>0$ that depends only on $A_j, A_0, \mathcal{J}, T, \Omega, d$.
\end{theorem}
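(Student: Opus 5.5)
We prove the statement by the same fixed-point argument as Theorem~\ref{th:nonlin-well}. The one change is that the coercivity of the memory term now comes from Lemma~\ref{lemma:kernel2} instead of Lemma~\ref{lemma:kernel}. With $\mu=\kappa=0$, the system is symmetrized by a multiplicative matrix $S$; the natural choice is the diagonal matrix $S(p,x)=\diagmatrix\big((1+\beta)/(1+\alpha+\lambda p),1,\dots,1\big)$, up to a constant rescaling of the $A_0$ block. Multiplying \eqref{eq:main_system} by $S$ produces symmetric flux matrices $SA_j$, a symmetric positive definite $SA_0$, and a damping term $S\,(\mathcal{J}*\Delta u)$. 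Since $S$ is diagonal, this damping term has exactly the form to which the coercivity lemmas apply, with $V=S$. For a frozen coefficient $w=(p^w,v^w)$ in a ball
\[
B_R=\Bigl\{w\in\mathcal{U}_\diamondsuit(0,T'):\ \|w\|_{\calU(0,T')}\le R,\ w(0)=u_0,\ \|p^w\|_{L^\infty((0,T')\times\Omega)}\le \tfrac{1}{2\lambda}(1-\|\alpha\|_{\Linf})+\delta\Bigr\},
\]
where $\delta>0$ is small, we solve the linearized problem with the linear well-posedness result Theorem~\ref{thm:lin-well}. This is applied at the levels $H^2$, $H^3$ and $H^4$: we differentiate in space with $\Delta$ and $\Delta^2$, which the spaces $H^k_\diamondsuit$ with their vanishing traces of $\Delta u$ and $\Delta^2u$ are built to allow. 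The result is the map $\mathcal{T}:w\mapsto u$.

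The key point is where the smallness of the data entered the proof of Theorem~\ref{th:nonlin-well}. There it was used only to guarantee the ratio condition \eqref{eq:kerlem_assump}, namely that $\|\partial_t V\|_{L^\infty}$ is small compared with $\inf\lambda_{\min}(V)$, together with the positivity of $S$. Here Lemma~\ref{lemma:kernel2} needs only that $V$ be diagonal, positive definite and $W^{1,\infty}$ in time, and the time horizon is allowed to shrink. Positivity of $V$ needs $1+\alpha+\lambda p^w\ge c_0>0$ uniformly. The hypothesis \eqref{eq:refine_theorem_assumption} gives $1-\|\alpha\|_{\Linf}-\lambda\|p_0\|_{L^\infty}>\tfrac12(1-\|\alpha\|_{\Linf})$. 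By continuity in time we have $p^w(t)=p_0+\int_0^t p^w_t$, and $H^2\embed L^\infty$ for $d\le3$ gives $\|p^w(t)-p_0\|_{L^\infty}\le C t R$. Hence for $T'$ small, depending on $R$, the margin persists. The choice of $\delta$ in $B_R$ is exactly this margin, and it also yields $\lambda_{\min}(SA_0)\ge c>0$. The quantity $\|\partial_t V\|_{L^\infty}\lesssim \|p^w_t\|_{L^\infty(0,T';H^2)}\le CR$ is bounded but not small, which Lemma~\ref{lemma:kernel2} tolerates. Consequently $R$ may be large, chosen as a multiple of $\|u_0\|_{H^4_\diamondsuit}+\|g\|_{\mathcal{V}}$ plus a constant, and $T'$ is then taken small.

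The steps, in order, are as follows.
\begin{enumerate}
\item Verify the hypotheses of Lemma~\ref{lemma:kernel2} uniformly over $w\in B_R$. We must obtain one $T'$ and one constant $C_{\mathcal{J}}$ for all such $V$, which requires inspecting the proof of Lemma~\ref{lemma:kernel2}: there the dependence on $V$ is through $\|V\|_{W^{1,\infty}}$ and $\inf\lambda_{\min}(V)$, and both are controlled on $B_R$.
\item Derive the energy estimate of Theorem~\ref{thm:lin-well} with constants depending on $R$. Commutator terms from differentiating the variable coefficients are bounded using $d\le3$ Sobolev embeddings and the $H^4$ regularity of $\alpha,\beta$. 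Powers of $T'$ (or $\|g\|_{L^1}$ type terms) multiply the $R$-dependent contributions, so that $\|\mathcal{T}w\|_{\calU(0,T')}\le C_0(\|u_0\|+\|g\|)+C(R)\,T'^{\gamma}\le R$ for some $\gamma>0$.
\item Show that $\mathcal{T}$ is contractive in the lower norm $L^\infty(0,T';L^2)\cap H^1(0,T';H^1)$, again with a factor $T'^{\gamma}$, using the coercivity of Lemma~\ref{lemma:kernel2} on the difference equation.
\item Apply Banach's fixed point theorem in $B_R$, which is closed in the weaker norm by weak-$*$ lower semicontinuity. This gives existence and uniqueness.
\end{enumerate}

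The main obstacle is step~2. All smallness of the nonlinear terms must be extracted from $T'$, not from the data. The delicate point is that the memory term provides only the coercivity \eqref{eq:kernel_assump}, that is, control of $\|\mathcal{J}*y\|_{L^2L^2}$ and not of $\nabla u$ pointwise in time. The higher-order nonlinear terms such as $\lambda p\,\nabla\cdot v$ differentiated four times must therefore be absorbed by the symmetric hyperbolic structure together with Gronwall's inequality. Any remaining term that cannot be absorbed will be handled through Young's inequality against $T'$ before fixing $T'$.
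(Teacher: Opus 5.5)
Your overall route is the paper's. You use the same map $\mathcal{T}$ defined through a diagonal symmetrizer. Yours equals the paper's $S_W$ times the scalar $(1+\beta)/(1+\alpha+\lambda p)$, which symmetrizes equally well, though the paper's choice keeps $J_0=S_W$ affine in $p$. You use Lemma~\ref{lemma:kernel2} in place of Lemma~\ref{lemma:kernel}, with one $T'$ and one $C_{\mathcal{J}}$ for the whole ball. You fix $R$ from the data before shrinking $T'$, both in the Gronwall factor $\exp\bigl(\int_0^{T'}Q(s)\,\mathrm{d}s\bigr)$ and in $\|p(t)\|_{L^\infty(\Omega)}\le\|p_0\|_{L^\infty(\Omega)}+CT'R$. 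Your ball, with the bound $\frac{1}{2\lambda}(1-\|\alpha\|_{\Linf})+\delta$, is more careful than the paper's bound $\frac{1}{\lambda}(1-\|\alpha\|_{\Linf})$. The paper's bound allows $\inf(1+\alpha+\lambda q)=0$ on the ball. The uniform lower bound on $\lambda_{\min}(J_0)$ that you keep is what Lemma~\ref{lemma:kernel2} and the energy-estimate constants need.

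The step that fails as written is Step~3. The difference $\bar u=\mathcal{T}z^{(1)}-\mathcal{T}z^{(2)}$ solves the linearized problem with zero initial and boundary data and with source
$h=S(z^{(1)})\sum_{j}\bigl(A_j(z^{(1)})-A_j(z^{(2)})\bigr)u^{(2)}_{x_j}$.
This source has only the spatial regularity of $\bar z=z^{(1)}-z^{(2)}$.

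Testing with $\bar u$ and applying Lemma~\ref{lemma:kernel2} with $V=J_0(z^{(1)})$ and $y=\nabla\bar u$ gives
$\|\bar u\|^2_{L^\infty(0,T';L^2(\Omega))}\le CT'\|\bar z\|^2_{L^\infty(0,T';L^2(\Omega))}$,
together with control of $\nabla(\mathcal{J}*\bar u)$ in $L^2(0,T';L^2(\Omega))$. However, bounding $\|\bar u_t\|_{L^2(0,T';H^1(\Omega))}$ through the equation needs $\bar u\in L^2(0,T';H^2(\Omega))$ and $\mathcal{J}*\bar u\in L^2(0,T';H^3(\Omega))$. That requires a higher-order energy estimate whose source needs $\bar z$ in $H^2(\Omega)$, which your norm $L^\infty(0,T';L^2)\cap H^1(0,T';H^1)$ does not control. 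One derivative is lost, and the contraction does not close.

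The fix is to drop the $H^1(0,T';H^1)$ component. By Lemma~\ref{lemma:banach}, contraction in $Y=L^\infty(0,T';L^2(\Omega))$ alone suffices, because the ball is bounded and weak-$\star$ closed in $\mathcal{U}_\diamondsuit(0,T')$. This is exactly how the paper concludes.
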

\begin{remark}
Strictly speaking, our results are applicable not only to integrable kernels, but also to distributional kernels of the form \(\mathcal{J} = J + A \delta_0\) with \( {J} \in L^1(0,T)^{1+d}\) and \(A\) positive definite. However, we do not cover this case, since if \(A \neq 0\), parabolic estimates are available, and well-posedness can be shown as in \cite{QuadraticWave}.

\end{remark}
Finally, when $\mathcal{J}=0$ we obtain a classical quasilinear hyperbolic well-posedness result stated on \(\Omega=\R^d\) without the assumption \( \mu = \kappa = 0\).
\begin{theorem}[Inviscid Kuznetsov-type case] \label{th:unbounded_domain}
Let $T>0$, $\Omega=\R^d$, $\vartheta=\kappa$ in~\eqref{eq:Aj_matrices} and $\mathcal{J} = 0$. For some $s > 1 + \frac{d}{2}$, assume
\begin{equation}
\begin{aligned}
u_0 \in U_0 &:= \left \{   u \in H^s(\R^d;\R^{1+d}) : \| u \|_{H^s} < C_{L^\infty \embed H^s}^{-1} r_2  \right \},
\\
\| \alpha \|_{L^\infty(\R^d)} &< r_\alpha, \quad \| \beta \|_{L^\infty(\R^d)} < r_\beta, \quad
g \in C([0,T];H^s(\R^d;\R^{1+d}))
\end{aligned}
\end{equation}
with $r_\alpha<\frac{1}{16}$, $r_\beta<\frac{1}{16}$, $r_2 =  \frac{1}{16}\min\{ |\lambda|^{-1}, |\mu|^{-1}, |\kappa|^{-1}  \}$.
Then, there exists a $T'>0$ such that equation~\eqref{eq:main_system} has a unique solution
$$ u \in C\Bigl([0,T'];U_0 \Bigr) \cap C^1\left([0,T'];H^{s-1}(\R^d;\R^{1+d})\right)$$
and the solution $u$ depends continuously on the data.
\end{theorem}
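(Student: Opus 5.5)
The plan is to bring \eqref{eq:main_system} with $\mathcal{J}=0$ on $\R^d$ into the framework of Kato's theory of quasilinear symmetric hyperbolic systems \cite{Kato1975}. The matrices $A_j(u,x)$ are not symmetric, so I first construct a symmetrizer. With $\vartheta=\kappa$, I take the block diagonal
\[
S(u,x)=\begin{pmatrix} 1+\beta(x)+\mu p & 0\\ 0 & (1+\alpha(x)+\lambda p)\,\id_d\end{pmatrix}.
\]
Then $S A_j$ has off-diagonal blocks $(1+\beta+\mu p)(1+\alpha+\lambda p)e_j^\top$ and its transpose, and diagonal blocks $(1+\beta+\mu p)\kappa v_j$ and $(1+\alpha+\lambda p)\kappa e_j v^\top$. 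The lower-right block $e_jv^\top$ is not symmetric, so the velocity part of $S$ must be adjusted. The natural fix is to replace the $\kappa e_j v^\top$ contribution by $\kappa v_j \id_d$ plus a remainder. Concretely, one has $\sum_j e_j v^\top \partial_{x_j} v = \nabla v\, v$ in place of $(v\cdot\nabla)v$, and the two differ by $\tfrac12\nabla|v|^2 - (v\cdot \nabla) v = (\nabla\times v)\times v$-type terms. For the flux as written I would check whether $S A_j$ is symmetric when $\vartheta=\kappa$. If it is not, I would use the symmetrizer of Section~\ref{se:well-posedness_Kuznetsov} announced in the introduction, taking as given that it yields symmetric $\tilde A_j(u,x)=S A_j$ and symmetric positive definite $\tilde A_0=SA_0$, both smooth in $u$. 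Positive definiteness is where the smallness enters. If $\|u\|_{L^\infty}\le r_2$ and $\|\alpha\|_\infty,\|\beta\|_\infty<1/16$, then every diagonal entry of $S$ lies in $[1-\tfrac18,1+\tfrac18]$, so $S$ and $SA_0$ are uniformly positive definite. By the Sobolev embedding $H^s\embed L^\infty$ for $s>d/2$, the open set $U_0$ maps into this $L^\infty$-ball, and this is exactly why $U_0$ is defined as it is.

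Next I verify Kato's hypotheses on the phase space $U_0\subset H^s$ with $X=L^2$, $Y=H^s$, and isomorphism $\Lambda^s=(1-\Delta)^{s/2}$:
\begin{enumerate}[(i)]
\item $\tilde A_0(u)$ is uniformly positive, and $u\mapsto \tilde A_0(u)$ is Lipschitz $L^2\to\mathcal{L}(L^2)$ on bounded sets of $U_0$.
\item $\mathcal{A}(u)=\tilde A_0^{-1}\sum_j\tilde A_j(u,x)\partial_{x_j}$ generates a quasi-contractive semigroup in $L^2$ with the energy norm $(\tilde A_0(u)\cdot,\cdot)$, via the standard integration by parts using symmetry and $\nabla\tilde A_j\in L^\infty$ (from $s>1+d/2$).
\item The commutator $[\Lambda^s,\mathcal{A}(u)]\Lambda^{-s}$ is bounded in $L^2$, by Kato–Ponce estimates, using $u\in H^s$, $s>1+d/2$.
\item Lipschitz dependence $\|\mathcal{A}(u)-\mathcal{A}(w)\|_{H^s\to L^2}\lesssim\|u-w\|_{L^2}$.
\end{enumerate}
The coefficients $\alpha,\beta$ must be regular enough for these (implicitly in $H^s$ or $C_b^{s}$), and I would assume this. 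The source $\tilde g=S g$ is continuous in time into $H^s$ when $u\in C([0,T];H^s)$. Kato's theorem then gives a unique solution $u\in C([0,T'];U_0)\cap C^1([0,T'];H^{s-1})$ with continuous dependence. Multiplying back by $S^{-1}$, which is invertible on $U_0$, shows that this is a solution of the original system, and uniqueness transfers because the two systems are equivalent.

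I expect the main obstacle to be constructing a genuine symmetrizer when $\vartheta=\kappa$, and in particular handling the nonsymmetric $\kappa e_jv^\top$ block. The symmetrizer may need off-diagonal entries depending on $v$ (e.g.\ a correction involving $v v^\top$) to balance $e_j v^\top$ against the pressure row $\vartheta v_j$. I would first try a general ansatz $S=\begin{pmatrix} a & b^\top\\ b & c\,\id+ D\end{pmatrix}$ with $a,b,c,D$ polynomial in $(p,v)$. I would then solve the symmetry conditions for $SA_j$ for all $j$ simultaneously and use $\vartheta=\kappa$ to make them consistent. Finally, I would check positivity of $S$ under the explicit constants $1/16$.
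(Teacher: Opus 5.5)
\textbf{Gap: the symmetrizer is never constructed.} Your reduction to Kato's Theorem~II on $U_0\subset H^s$ is the same route the paper takes. However, the step that carries the proof is missing: you never exhibit a symmetrizer, and the candidates you mention cannot work.

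Suppose $S=\operatorname{diag}(a,M)$ has zero off-diagonal blocks. Symmetry of the off-diagonal blocks of $SA_j$ for every $j$ forces $q_2M=aq_1\id_{\R^d}$, so $M$ is a multiple of the identity. Then $M\kappa e_jv^\top$ stays non-symmetric whenever $d\ge 2$ and $\kappa v\neq 0$. Adjusting only the velocity block is therefore a dead end.

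The curl-based fix is not admissible either. One has $\sum_j\kappa e_jv^\top\partial_{x_j}v=\tfrac{\kappa}{2}\nabla|v|^2$, and trading it for $\kappa(v\cdot\nabla)v$ changes the system by $\kappa(\nabla\times v)\times v$. That term is first order in $v$, not a source term, and it vanishes only for irrotational flows, which are not assumed. Finally, taking the symmetrizer of Section~\ref{se:well-posedness_Kuznetsov} as given assumes exactly what has to be proved.

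\textbf{The missing idea.} The coupling must go into the off-diagonal blocks of $S$:
\[
S(u)=\begin{pmatrix} q_2 & \kappa v^\top\\ \kappa v & q_1\id_{\R^d}\end{pmatrix},\qquad q_1=1+\alpha+\lambda p,\quad q_2=1+\beta+\mu p,
\]
which gives
\[
SA_j=\begin{pmatrix} (\vartheta+\kappa)q_2v_j & q_1q_2e_j^\top+\kappa^2v_jv^\top\\ q_1q_2e_j+\kappa\vartheta v_jv & \kappa q_1\,(ve_j^\top+e_jv^\top)\end{pmatrix}.
\]
The product of the off-diagonal block $\kappa v$ of $S$ with $q_1e_j^\top$ supplies exactly the missing $\kappa q_1 ve_j^\top$. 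The two off-diagonal blocks are transposes of each other precisely when $\vartheta=\kappa$. This is where the hypothesis enters, and your argument never accounts for it.

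\textbf{Positivity must be redone.} Since $S$ is no longer diagonal, checking that the diagonal entries lie in $[7/8,9/8]$ does not suffice. You need $q_1>0$ and $q_1q_2>|\kappa v|^2$ (Schur complement). Equivalently, the paper's eigenvalues $q_1$ and $\tfrac12\bigl(q_1+q_2\pm\sqrt{(q_1-q_2)^2+4|\kappa v|^2}\bigr)$ must be positive. This is why $|\kappa|^{-1}$ appears in $r_2$; a diagonal symmetrizer would give no reason for $\kappa$ to enter the smallness condition. With $\|u\|_{L^\infty}<r_2$ and $r_\alpha,r_\beta<\tfrac1{16}$ one gets $|q_i-1|<\tfrac18$ and $|\kappa v|<\tfrac1{16}$, hence uniform positivity on $U_0$.

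The remaining Kato hypotheses are then verified as you outline. You are right that some regularity of $\alpha,\beta$ beyond $L^\infty$ is implicitly needed. Note also that with this non-diagonal $S$, the new time coefficient $SA_0$ is symmetric only if $A_0$ commutes with $S$, e.g.\ $A_0=\id$ as in the dimensionless model.
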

We note that related results also hold on unbounded domains. However, one loses control of normal derivatives and must impose suitable boundary conditions. We do not pursue these technical issues here and refer the reader to \cite{OhnoShizutaYanagisawa1995} and the references therein for details.


\subsection{Organisation of the paper}

The remainder of the paper is organized as follows. In Section~\ref{se:prelim}, we introduce notation and recall preliminaries. Section~\ref{se:well-posedness} contains the proofs of Theorems~\ref{th:nonlin-well}, \ref{th:nonlin-well-refined}, and \ref{th:unbounded_domain}.
In Section~\ref{se:well-posedness_Westervelt}, we study the Westervelt-type system~\eqref{eq:main_system}, first establishing well-posedness for a linearized problem via energy estimates and then applying Banach’s fixed point theorem. Section~\ref{se:well-posedness_Kuznetsov} treats the Kuznetsov-type system in the inviscid case using \cite[Theorem~II]{Kato1975}.
The appendices collect auxiliary material: Appendix~\ref{se:derivation} contains the model derivation, Appendix~\ref{se:appendix_galerkin} a boundary relation used in the proof of Theorem~\ref{th:nonlin-well}, Appendix~\ref{se:kernel} the proof of the nonlinear coercivity inequality~\eqref{eq:kerlem_assump} and details on the optimality of Lemma~\ref{lemma:kernel}.

%
%
%
%
%
\section{Notation and Preliminaries} \label{se:prelim}
Throughout the current work, we simplify notation by omitting the explicit
$1+d$ superscript on Sobolev spaces. For example, $L^2(\Omega)$ may refer to $L^2(\Omega)^{1+d}=L^2(\Omega;\R^{1+d})$ or  $L^2(\Omega)^{(1+d) \times (1+d)} $. The context indicates whether the corresponding object is scalar, vector, or matrix valued. For any $L^2$ scalar product we write $(\cdot, \cdot)_2$. Again, it is clear from the context, which scalar product is meant. 

The partial derivative with respect to time is denoted by $\dt=\cdot_t$, while the partial derivative with respect to space in the direction $j=1, \dots, 3$ is written as $\dx{j}=\cdot_{x_j}$. The convolution operator $\conv$ is defined by
\[
(f \conv g)(t) = \int_0^t f(s)\, g(t-s)\, \mathrm{d}s
\]
for $t\geq 0$. If $f,g$ are vectors or matrices, we apply the convolution component wise. The vector of partial derivatives in space is denoted by $\nabla$, while $\cdot$ represents the scalar product between vectors, and the superscript ${\cdot}^\transpose$ denotes the transposition of a vector or a matrix.
We use the row-wise divergence of a matrix field 
$V \in \R^{m \times m}$, defined as
\[
(\Divv V)_i := \sum_{j=1}^m \partial_{x_j} V_{ij},
\qquad i = 1, \dots, m.
\]
The Frobenius inner product of matrices 
$A, B$ is denoted by $ A : B $. The identity matrix is denoted by $\id$, and the Jacobian matrix by $\jac$. The trace of a matrix $V$ is written as $\tr(V)$ and equals the sum of its diagonal entries. The (vector) Laplace operator is denoted as $\Delta$, $ a \times b$ stands for the cross product between vectors $a,b \in \R^3$.

Let $n$ denote the unit outward normal vector of $\partial \Omega$ and let $\dS$ be the surface measure. We use the shorthand notation $v_n := (\jac v)n$ as a normal derivative of a vector field $v$. We rely on the following integration by parts formulas; see \cite[{Sec.~1.5}]{grisvard2011elliptic}. 
\begin{lemma}\label{le:lapl_ibp}
Let $\Omega \subset \mathbb{R}^d$ be open, bounded with Lipschitz boundary, and let 
$u,v \in H^2(\Omega)$. Then
\begin{equation}
\label{eq:vector-green}
(u, \Delta v)_{2} = (\Delta u, v)_{2} + \int_{\partial \Omega}     u \cdot v_n  - v \cdot u_n \dS.
\end{equation}
\end{lemma}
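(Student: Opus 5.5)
The identity is the vector-valued version of Green's second identity, so I will reduce it to the scalar case component by component and then sum. Write $u=(u_1,\dots,u_m)$ and $v=(v_1,\dots,v_m)$. The vector Laplacian acts componentwise, and $(u,\Delta v)_2=\sum_i (u_i,\Delta v_i)_{L^2(\Omega)}$. By the convention $v_n=(\jac v)n$, the $i$-th component of $v_n$ is $\nabla v_i\cdot n=\partial_n v_i$. Hence $u\cdot v_n=\sum_i u_i\,\partial_n v_i$, and likewise for $v\cdot u_n$. It therefore suffices to prove the scalar identity
\[
\int_\Omega u_i\,\Delta v_i\dxx=\int_\Omega \Delta u_i\, v_i\dxx+\int_{\partial\Omega} u_i\,\partial_n v_i - v_i\,\partial_n u_i \dS
\]
for scalar $u_i,v_i\in H^2(\Omega)$.

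For the scalar identity, I first take $u_i,v_i\in C^\infty(\overline\Omega)$. I apply the divergence theorem on the Lipschitz domain $\Omega$, where the outward normal $n$ exists $\dS$-a.e., to the field $u_i\nabla v_i - v_i\nabla u_i$. Its divergence is $u_i\Delta v_i - v_i\Delta u_i$, because the terms $\nabla u_i\cdot\nabla v_i$ cancel. This gives the identity for smooth functions.

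Next I extend to $H^2(\Omega)$ by density. For a bounded Lipschitz domain, $C^\infty(\overline\Omega)$ is dense in $H^2(\Omega)$, using a Stein/Calder\'on extension operator followed by mollification. Both sides of the identity are continuous in the $H^2\times H^2$ topology:
\begin{itemize}
\item the volume terms are continuous, since $\Delta\colon H^2\to L^2$ is bounded;
\item the boundary terms are continuous, since the trace operator maps $H^1(\Omega)\to L^2(\partial\Omega)$ boundedly on Lipschitz domains, and $\nabla u_i\in H^1(\Omega)^d$ and $n\in L^\infty(\partial\Omega)$, so that $\partial_n u_i=\gamma(\nabla u_i)\cdot n\in L^2(\partial\Omega)$.
\end{itemize}
Passing to the limit then proves the scalar identity. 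Summing over $i$ gives \eqref{eq:vector-green}.

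The only delicate point is the justification on a merely Lipschitz boundary, namely the density of smooth functions and the meaning of the normal derivative trace. Both are standard and are covered by \cite[Sec.~1.5]{grisvard2011elliptic}, which I would cite rather than reprove. Everything else is an algebraic cancellation.
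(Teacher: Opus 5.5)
Your proposal is correct and essentially matches the paper, which proves the lemma in one line by citing Grisvard's Green formula for Lipschitz domains \cite[Lem.~1.5.3.7]{grisvard2011elliptic}. Your argument (componentwise reduction via $v_n=(\jac v)n$, the divergence theorem for smooth functions, then density of $C^\infty(\overline\Omega)$ in $H^2(\Omega)$ together with continuity of the traces $H^1(\Omega)\to L^2(\partial\Omega)$) is the standard proof behind that result, written out explicitly.
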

\begin{proof}
The result is a consequence of \cite[Lem. 1.5.3.7]{grisvard2011elliptic}.
\end{proof}
\begin{lemma} \label{le:mat_ibp}
Let $\Omega \subset \mathbb{R}^d$ be open, bounded with Lipschitz boundary, let $V \in H^1(\Omega)$ be a matrix field, and $u \in H^1(\Omega)$.
Then
\begin{equation}\label{eq:ibp-matrix}
(u,\Divv V)_{2}
= -(\jac u , V)_{2} + \int_{\partial\Omega} u \cdot V n \dS .
\end{equation}
If, additionally, $V \in H^1(\Omega)\cap L^\infty(\Omega)$ is symmetric, $u \in H^1(\Omega) \cap L^\infty(\Omega)$, we obtain for $j =1, \dots, d$
\begin{equation}
2 ( V \dx{j} u, u)_2 = - ( \dx{j} V u, u)_2 +\int_{\partial \Omega} V u \cdot u n_j \dS.
\end{equation} 
\end{lemma}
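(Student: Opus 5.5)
The first identity \eqref{eq:ibp-matrix} is the vector form of the classical Gauss--Green formula, applied row by row. For a smooth scalar $w$ and a smooth vector field $F$ on a bounded Lipschitz domain we have $(w,\divv F)_2 = -(\grad w, F)_2 + \int_{\partial\Omega} w\, F\cdot n \dS$ (see \cite[Sec.~1.5]{grisvard2011elliptic}). I would take $w = u_i$ and $F = (V_{i1},\dots,V_{im})^\transpose$, the $i$-th row of $V$, and sum over $i$. On the left this gives $(u,\Divv V)_2$, since $(\Divv V)_i = \divv F$. In the volume term it gives $\sum_{i,j}(\dx{j}u_i, V_{ij})_2 = (\jac u, V)_2$, the Frobenius pairing. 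On the boundary it gives $\sum_i u_i (Vn)_i = u\cdot Vn$. To pass from smooth fields to $u,V\in H^1(\Omega)$, I would use density of $C^\infty(\overline\Omega)$ in $H^1(\Omega)$ for Lipschitz domains. The products $u_iV_{ij}$ lie in $W^{1,1}(\Omega)$, and the trace map $H^1(\Omega)\to L^2(\partial\Omega)$ is continuous, so every term passes to the limit. Strictly speaking, when $d>4$ one needs $u_i V_{ij}\in W^{1,1}$ with traces in $L^1(\partial\Omega)$, which still holds because $H^1\cdot H^1\subset W^{1,1}$.

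For the second identity, I would apply the Gauss--Green formula in the single direction $x_j$ to the scalar $\phi := Vu\cdot u = \sum_{k,l} V_{kl}u_k u_l$. This gives $\int_\Omega \dx{j}\phi \dxx = \int_{\partial\Omega}\phi\, n_j \dS$. By the product rule and the symmetry of $V$,
\[
\dx{j}\phi = (\dx{j}V)u\cdot u + Vu\cdot \dx{j}u + V\dx{j}u\cdot u = (\dx{j}V)u\cdot u + 2\,V\dx{j}u\cdot u .
\]
Integrating and rearranging yields $2(V\dx{j}u,u)_2 = -((\dx{j}V)u,u)_2 + \int_{\partial\Omega} Vu\cdot u\, n_j\dS$, which is the claim.

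The main obstacle is justifying that $\phi\in W^{1,1}(\Omega)$, that the product rule holds, and that the trace of $\phi$ is $Vu\cdot u$ on $\partial\Omega$. This is where the extra $L^\infty$ assumptions enter. The product of an $H^1\cap L^\infty$ function with an $H^1$ function lies in $W^{1,1}$ and obeys the Leibniz rule. Applying this twice, first to $u_k u_l\in H^1\cap L^\infty$ and then to $V_{kl}u_ku_l$, gives $\phi\in W^{1,1}(\Omega)$ with the stated derivative.

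For the traces, I would approximate $u$ and $V$ by smooth functions that converge in $H^1$ and are bounded in $L^\infty$, for instance by mollifying after a Lipschitz extension. This gives convergence of $\phi$ in $W^{1,1}$. Since the trace map $W^{1,1}(\Omega)\to L^1(\partial\Omega)$ is continuous, the boundary term follows, with the trace of the product equal to the product of the traces. Once this regularity bookkeeping is in place, the rest is the algebra above.
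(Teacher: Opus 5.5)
Your proposal is correct and follows the same route as the paper. The first identity comes from Green's formula applied row by row. The second comes from the product rule and the symmetry of $V$ applied to $u^\top V u$, followed by Green's formula in the direction $x_j$. The only difference is bookkeeping: the paper notes directly that $u^\top V u \in H^1(\Omega)$ because $u, V \in L^\infty(\Omega)$, whereas you work in $W^{1,1}(\Omega)$ and spell out the density and trace-of-product arguments, which is equally valid.
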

\begin{proof}
The first equation is a consequence of Green's formula; see \cite[Thm. 1.5.3.1]{grisvard2011elliptic}. For the second equation we note that by the product rule and symmetry of $V$, we have
\[
\dx{j} \left(  u^\transpose V u \right) = \dx{j}u^\transpose V u + u^\transpose \dx{j} V u + u^\transpose V \dx{j} u = 2  u^\transpose V \dx{j} u +  u^\transpose \dx{j} V u .
\]
Note that, since $u, V \in L^\infty(\Omega)$, we have $u^\transpose V u \in H^1(\Omega)$, so that
by Green's formula, we obtain
\[
\int_\Omega 2  u^\transpose V \dx{j} u +  u^\transpose \dx{j}  V u \dxx = \int_{\partial \Omega} u^\transpose V u n_j \dS.
\] 
\end{proof}

These spaces allow us to define the solution space of the studied equation. We next introduce an elliptic regularity result that holds in the spaces $\Honefour, \Honefive$.
\begin{lemma} \label{le:elliptisc}
Let $\Omega \subseteq \R^d$ be bounded and have a boundary of class $C^{4,1}$. Then
\[ \label{eq:elliptic_reg}
\begin{aligned}
\| \psi \|_{H^4(\Omega)} \leq C_E \| \Delta^2 \psi \|_{L^2(\Omega)} \quad \forall \psi \in \Honefour, \\
\| \psi \|_{H^5(\Omega)} \leq C_E \| \jac \Delta^2 \psi \|_{L^2(\Omega)} \quad \forall \psi \in \Honefive
\end{aligned}
\]
with $C_E>0$.
\end{lemma}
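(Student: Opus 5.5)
The plan is to reduce both estimates to the standard $H^2$ elliptic regularity for the Dirichlet Laplacian on a $C^{4,1}$ domain, used twice in a row. For $\psi \in \Honefour$, set $w := \Delta \psi$. By definition of $H_\diamondsuit^3$ we have $w|_{\partial\Omega} = 0$, and $w \in H^2(\Omega)$. So $w \in H^2(\Omega)\cap H^1_0(\Omega)$ solves the Dirichlet problem $\Delta w = \Delta^2\psi$ with zero boundary data. Higher-order elliptic regularity, valid since the boundary is $C^{4,1}$ (see Grisvard, Thm.~2.5.1.1 / Rem.~2.5.1.2), applied with $k=0$ gives
\[
\|w\|_{H^2(\Omega)} \le C\|\Delta^2\psi\|_{L^2(\Omega)}.
\]
Next, $\psi \in H^1_0(\Omega)$ solves $\Delta\psi = w$ with $w \in H^2(\Omega)$. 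Applying the same result with $k=2$, which needs a $C^{3,1}$ boundary, yields
\[
\|\psi\|_{H^4(\Omega)} \le C\|w\|_{H^2(\Omega)}.
\]
Chaining the two bounds gives the first inequality. All the estimates hold without a lower-order term $\|\cdot\|_{L^2}$, because the Dirichlet Laplacian on a bounded domain is injective (Poincaré).

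For the second estimate, take $\psi \in \Honefive$ and again set $w = \Delta\psi$, so now $w \in H^3(\Omega)$ and $w|_{\partial\Omega}=0$. Define $z := \Delta w = \Delta^2\psi \in H^1(\Omega)$. The extra condition $\Delta^2\psi|_{\partial\Omega}=0$ gives $z\in H^1_0(\Omega)$. By Poincaré,
\[
\|z\|_{H^1} \le C\|\grad z\|_{L^2} = C\|\jac\Delta^2\psi\|_{L^2}.
\]
The regularity shift then runs in three steps:
\[
\|w\|_{H^3} \le C\|z\|_{H^1}, \qquad \|\psi\|_{H^5}\le C\|w\|_{H^3}.
\]
The first of these uses $k=1$ and a $C^{2,1}$ boundary; the second uses $k=3$ and a $C^{4,1}$ boundary, which is exactly the hypothesis. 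Combining everything and taking $C_E$ as the maximum of the constants proves both inequalities.

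The main point requiring care is the last shift, $H^3 \to H^5$. This is where the $C^{4,1}$ regularity of $\partial\Omega$ is needed, and one must check that the cited regularity theorem gives an a priori bound for the given $\psi$, not just existence of a regular solution. This follows by uniqueness of the $H^1_0$ solution: the regular solution provided by the theorem must coincide with $\psi$. A second, minor point is that the traces $\Delta\psi|_{\partial\Omega}$ and $\Delta^2\psi|_{\partial\Omega}$ are well defined, since $\Delta\psi\in H^2$ and $\Delta^2\psi\in H^1$ respectively.
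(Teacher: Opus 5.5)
Your proposal is correct and follows the paper's proof. Like the paper, you take $\Delta\psi$ as an intermediate unknown with zero Dirichlet trace and apply the second-order Dirichlet regularity of Grisvard's Thm.~2.5.1.1 twice. For the $H^5$ bound you also spell out what the paper only calls ``similar'': Poincar\'e for $\Delta^2\psi \in H^1_0(\Omega)$, then the shifts $H^1 \to H^3 \to H^5$, with the $C^{4,1}$ boundary needed only in the last shift.
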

\begin{proof}
The result follows from \cite[Thm.~2.5.1.1]{grisvard2011elliptic} arguing along the lines of \cite[Lem.~2.3]{meliani2025global}. Let $\psi \in \Honefour$. 
Writing $\phi=-\Delta \psi$ we have
\begin{equation}
\left\{ \begin{array}{ll}
- \Delta \phi = \Delta^2 \psi &  \mathrm{on } \qquad \Omega, \\
\phi = 0 &  \mathrm{on } \qquad \partial \Omega.
\end{array}
\right.
\end{equation}
The aforementioned \cite[Thm.~2.5.1.1]{grisvard2011elliptic} ensures 
$\| \phi\|_{H^2(\Omega)} \leq C \|\Delta ^2 \psi\|_{L^2(\Omega)}$, for some constant $C>0$. We also have
\begin{equation}
\left\{ \begin{array}{ll}
- \Delta \psi = \phi &  \mathrm{on } \qquad \Omega, \\
\psi = 0 &  \mathrm{on } \qquad \partial \Omega.
\end{array}
\right.
\end{equation}
Applying \cite[Thm.~2.5.1.1]{grisvard2011elliptic} once again, 
we obtain $\| \psi \|_{H^4(\Omega)} \leq \tild{C} \| \phi \|_{H^2(\Omega)}$ for some constant $\tild{C}$. The proof for $\Honefive$ follows similarly, exploiting the $C^{4,1}$ regularity of the boundary.
\end{proof}
We denote continuous embeddings between Banach spaces as $\embed$ and make use of the Sobolev embeddings 
\[ \label{eq:sob_emb}
H^1(\Omega) \embed L^{4}(\Omega), \quad H^2(\Omega) \embed L^{\infty}(\Omega)
\]
valid {for $d \leq 3$} and $\Omega \subset \R^d$ open, bounded with Lipschitz boundary, see \cite[Thm.~1.4.4.1]{grisvard2011elliptic}. 
We also use the fact that for $s \geq 2$, $H^s(\Omega)$ is a Sobolev Algebra for $d \leq 3$, that is, if $u,v \in H^s(\Omega)$, then the product $uv \in H^s(\Omega)$, see {\cite[Theorem 4.39]{adams2003sobolev}}. 

For convenience of notation, we introduce the commutator for a differential operator $S$ and a function $f$ as:
\[ \label{eq:commutator}
[S, f]:= S f - f S.
\]
In particular, for the choice $S= \Delta$, we obtain $$[\Delta, f](g) =\Delta(fg) - f \Delta g.$$

Our analysis is based on fixed-point iterations. The next lemma provides a general form of Banach's fixed point theorem tailored to situations where contractivity holds only in a weaker norm than the one controlling the a priori bounds. Such a formulation is standard in hyperbolic problems; see, for instance, \cite{kaltenbacher2024limiting} and \cite[Lemma~2.2]{majda1984compressible}.

We denote by $X^*$ the (topological) dual of a Banach space $X$.
\begin{lemma}\label{lemma:banach}
Let $X,Y$ be Banach spaces such that \( X \embed Y\) and assume there exist Banach spaces $Z,W$ such that $X=Z^*$, $Y=W^*$, with $Z$ separable.

Let $B\subset X$ be non-empty, bounded in $X$, and weak-$\star$ closed in $X$.
If $\mathfrak T:B\to B$ is a contraction with respect to the $Y$-norm, i.e., there exists $\gamma\in(0,1)$ such that
\begin{equation}\label{eq:lemma_cont}
\|\mathfrak T(x)-\mathfrak T(y)\|_Y \le \gamma \|x-y\|_Y \qquad \forall\,x,y\in B,
\end{equation}
then $\mathfrak T$ has a unique fixed point in $B$.
\end{lemma}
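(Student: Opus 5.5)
The plan is to close $B$ under the iteration $x_{n+1}=\mathfrak T(x_n)$ and obtain a Cauchy sequence in $Y$. Its limit is then identified inside $B$ via weak-$\star$ compactness. Uniqueness will follow directly from the contraction estimate.

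\emph{Step 1 (Cauchy in $Y$).} Fix $x_0\in B$ and set $x_{n+1}=\mathfrak T(x_n)\in B$. Iterating \eqref{eq:lemma_cont} gives $\|x_{n+1}-x_n\|_Y\le \gamma^n\|x_1-x_0\|_Y$. Here $\|x_1-x_0\|_Y\le C\|x_1-x_0\|_X<\infty$, because $X\embed Y$ and $B$ is bounded in $X$. The geometric series argument then shows that $(x_n)$ is Cauchy in $Y$, so $x_n\to x^*$ in $Y$ for some $x^*\in Y$.

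\emph{Step 2 (the limit lies in $B$).} This is the main step. Since $B$ is bounded in $X=Z^*$ with $Z$ separable, the Banach--Alaoglu theorem in its sequential form (bounded sets of the dual of a separable space are weak-$\star$ sequentially compact) yields a subsequence $x_{n_k}\overset{\star}{\rightharpoonup}\tilde x$ in $X$. Because $B$ is weak-$\star$ closed, $\tilde x\in B$.

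It remains to identify $\tilde x=x^*$. I would first check that the embedding $X\embed Y$ maps weak-$\star$ convergent sequences in $X$ to weak-$\star$ convergent sequences in $Y=W^*$. In the intended applications (e.g.\ $X=L^\infty(0,T;H^4)\cap\dots$, $Y=L^\infty(0,T;L^2)$) this holds because $W$ embeds densely into $Z$ compatibly, so elements of $W$ act as elements of $Z$. I would state this compatibility as the natural interpretation of the hypotheses; it is the delicate point. Given it, $x_{n_k}\overset{\star}{\rightharpoonup}\tilde x$ in $Y$. Norm convergence $x_{n_k}\to x^*$ in $Y$ implies weak-$\star$ convergence to $x^*$, and weak-$\star$ limits are unique, so $x^*=\tilde x\in B$.

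\emph{Step 3 (fixed point and uniqueness).} Since $x^*\in B$, the contraction estimate gives $\|\mathfrak T(x^*)-x_{n+1}\|_Y\le\gamma\|x^*-x_n\|_Y\to0$. Hence $x_{n+1}\to\mathfrak T(x^*)$ in $Y$, and uniqueness of limits gives $\mathfrak T(x^*)=x^*$. If $y^*\in B$ is another fixed point, then $\|x^*-y^*\|_Y\le\gamma\|x^*-y^*\|_Y$ with $\gamma<1$, so $x^*=y^*$ in $Y$, and therefore in $X$ by injectivity of the embedding.
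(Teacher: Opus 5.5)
Your argument is essentially the paper's. The paper proves that $(B,\|\cdot\|_Y)$ is complete and then quotes Banach's theorem: a $Y$-Cauchy sequence has a $Y$-limit $y$, Banach--Alaoglu and weak-$\star$ closedness give a subsequence with weak-$\star$ limit $x\in B$, and uniqueness of weak-$\star$ limits gives $x=y$. You carry out the same argument on the Picard sequence.

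The point you flag is real, and you are more careful than the paper there. The paper settles the identification with ``since $X\embed Y$, uniqueness of weak-$\star$ limits gives $x=y$''. A bare continuous injection $\iota:X\to Y$ need not map weak-$\star$ convergent sequences of $Z^*$ to weak-$\star$ convergent sequences of $W^*$, and without this the lemma is false. Take $Z=c_0$, $X=\ell^1$, $Y=W=\R\times\ell^2$ (a Hilbert space), and
\[
\iota(a)=\Bigl(\sum_{n\ge1}a_n,\ (2^{-n}a_n)_{n\ge1}\Bigr),\qquad B=\{0\}\cup\{e_n:\ n\ge1\},\qquad \mathfrak T(0)=e_1,\quad \mathfrak T(e_n)=e_{n+1}.
\]
Then:
\begin{itemize}
\item $\iota$ is bounded and injective.
\item $B$ is bounded and weak-$\star$ closed, since its only weak-$\star$ accumulation point is $0$ and bounded sets in $c_0^*$ are weak-$\star$ metrizable.
\item From $\|\iota e_n-\iota e_m\|_Y=(4^{-n}+4^{-m})^{1/2}$ for $n\neq m$ and $\|\iota e_n-\iota 0\|_Y=(1+4^{-n})^{1/2}$, the map $\mathfrak T$ is a $Y$-contraction with $\gamma=\tfrac12$.
\end{itemize}
Yet $\mathfrak T$ has no fixed point. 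The Picard iterates from $0$ converge in $Y$ to $(1,0)\notin\iota(X)$, while they converge weak-$\star$ in $X$ to $0$. The identification fails because the functional $(1,0)\in W$ pulls back to $(1,1,\dots)\in\ell^\infty\setminus c_0$.

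So the compatibility you add is a necessary hypothesis, not a gap in your reasoning; the same hypothesis is silently needed in the paper's proof. The right formulation is that $\iota=j^*$ for a bounded $j:W\to Z$, i.e.\ $\langle \iota x,w\rangle_{W^*,W}=\langle x,jw\rangle_{Z^*,Z}$ for all $x\in X$, $w\in W$. Density of $j(W)$ in $Z$, which you mention, plays no role in the identification step. Under this hypothesis your Steps 1--3 are complete.

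It holds in the paper's application $X=\calU_\diamondsuit(0,T)$, $Y=L^\infty(0,T;L^2(\Omega))=L^1(0,T;L^2(\Omega))^*$. There $w\in L^1(0,T;L^2(\Omega))$ acts on $X$ through the $L^1(0,T;H_\diamondsuit^4(\Omega)^*)$ component of the predual.
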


\begin{proof}
We intend to apply Banach's fixed point theorem to the metric space $M=(B, \| \cdot\|_Y)$. Since $\mathfrak T$ maps $B$ into itself and \eqref{eq:lemma_cont} gives contractivity in $M$, it only remains to prove that $M$ is complete.

Let $x_n \in B$ be Cauchy with respect to $\|\cdot\|_Y$. As $Y$ is complete, there exists $y\in Y$ such that $x_n\to y$ in $Y$. Since $B$ is bounded in $X=Z^*$ and $Z$ is separable, the Banach--Alaoglu theorem (see \cite[Thm. 3.2.1]{Buhler}) yields a subsequence $x_{n_k}$ and an $x\in X$ such that
$
x_{n_k}\rightharpoonup^\star x \text{ in } X.
$
Because $B$ is weak-$\star$ closed in $X$, we have $x\in B$.

On the other hand, norm convergence in $Y=W^*$ yields the weak-$\star$ convergence
$x_{n_k}\rightharpoonup^\star y$ in $Y$. Thus, since $ X \embed Y$, uniqueness of weak-$\star$ limits gives $x=y \in B$.
\end{proof}

\section{Well-posedness analysis} \label{se:well-posedness}
In the special case $\mathcal{J}=\delta_0$, the well-posedness analysis of \eqref{eq:main_system} is enabled through higher-order parabolic estimates. The higher regularity is needed to control the nonlinear terms via the Sobolev embeddings \eqref{eq:sob_emb} valid for $d \leq 3$.
Working with the solution norm
\begin{equation}
   E(u) = \| u \|_{L^\infty(0,T;H^1(\Omega))}^2 + \| u \|_{L^2(0,T;H^2(\Omega))}^2,
\end{equation}
the `linearized' nonlinear term $ \| A_j(z,x) \lapl u \|_{L^2(0,T;L^2(\Omega))} \leq E(z) \| \lapl u \|_{L^2(0,T;L^2(\Omega))}^2 $, is absorbed for $E(z)$ small enough; see \cite{QuadraticWave} for details.

For general kernels $\mathcal{J}$, the situation changes, since non-local behavior of the damping term is introduced. Performing the same steps as in the parabolic case and assuming coercivity of $\mathcal{J}$ in the form of \eqref{eq:kernel_assump}, one obtains the solution norm
\begin{equation}
   F(u) = \| u \|_{L^\infty(0,T;H^1(\Omega))}^2 + \| \mathcal{J} * u \|_{L^2(0,T;H^2(\Omega))}^2,
\end{equation}
which fails to provide control over $\| u \|_{L^2(0,T;H^2(\Omega))}^2$ needed to treat the nonlinear terms.

Kernels with sufficiently strong singularities can recover time regularity. For instance, in the case of Abel kernels \eqref{eq:frac_kernel}, one gains control of $\| u \|_{H^{-\alpha/2}(0,T;H^2(\Omega))}^2$ if $F$ is controlled, where \(H^{-\alpha/2}(0,T)\) is the dual space of the fractional space \(H^{\alpha/2}(0,T)\); see, e.g.,~\cite[Lemma 2.3]{eggermont1988galerkin}. However, we do not pursue this approach, as our goal is to establish well-posedness for a broader class of kernels. This leads to additional difficulties, since for very smooth kernels, no time regularity in  \(H^2(\Omega)\) is expected. Consequently, we must work with higher-order energy estimates that do not rely on the regularity induced by the Laplacian, thus placing \eqref{eq:main_system} in a hyperbolic framework.

For the Westervelt-type system, assuming $\mu=\kappa=0$, symmetrization of \eqref{eq:main_system} is achieved by multiplying \eqref{eq:main_system} with the diagonal matrix
\begin{equation} \label{eq:S}
S_W(u) = S_W(p,v) = 
\begin{pmatrix}
1 & 0 \\
0 & \frac{1 + \alpha + \lambda p}{1 + \beta } \id_{\R^d}
\end{pmatrix} \in \R^{(1+d) \times (1+d)},
\end{equation}
while for the Kuznetsov-type system we refer to Section \ref{se:well-posedness_Kuznetsov} for details. The symmetrization process introduces nonlinearity in front of the Laplacian operator when $\mathcal{J} \neq 0$. This difficulty is overcome by the nonlinear coercivity result (Lemma~\ref{lemma:kernel}) under suitable assumptions on $S$. 

Note that while $\vartheta=\kappa$ is physically motivated for \eqref{eq:first_order}, the parabolic case allows for a wider range of parameters; see~\cite{QuadraticWave}. 

Before turning to the proofs, we briefly note that high regularity of solutions is standard in the theory of quasilinear symmetric hyperbolic systems; see, e.g.,~\cite{Kato1975}. In the present work, we assume $H^4$ regularity of the initial data. This choice ensures sufficient regularity of the solution to control the nonlinearities and facilitates the cancellation of boundary terms under Dirichlet conditions; see, e.g.,~\eqref{eq:thirdterm}.

\subsection{Well-posedness of Westervelt-type equation} \label{se:well-posedness_Westervelt}
In this section, we establish the well-posedness of the nonlinear problem \eqref{eq:main_system} when $\mu=\kappa=0$. To this end, our strategy is as follows. 
We linearize \eqref{eq:main_system} at a sufficiently regular $v:(0,T) \times \Omega \to \R^{1+d}$ yiedling a linear system with variable coefficients 
\[ \label{eq:resulting_system}
A_0 u_t + \sum_{j=1}^d A_j(v,x) u_{x_j} - \mathcal{J} \conv \lapl u = f.
\]
The resulting system \eqref{eq:resulting_system} is in general not symmetric.
The symmetrization introduces variable coefficients (whose matrix forms are denoted below by $M_j$ with $0\leq j\leq d $). 
\subsubsection{Linearized well-posedness}
Motivated by multiplication of \eqref{eq:resulting_system} by the symmetric matrix \(S_W\) from \eqref{eq:S}, we consider the following linearized system with variable coefficients
\begin{equation}\label{eq:main_system_linear}
\left\{
\begin{array}{ll}
{M}_0 u_t + \sum_{j=1}^d \lin_j  u_{x_j}  - {J_0} (\mathcal{J} \ast \Delta u ) = f & \text{on } (0,T) \times \Omega,\\
u(0) = u_0 & \text{on } \Omega,\\
u = 0 & \text{on } (0,T) \times \partial \Omega,
\end{array}
\right.
\end{equation}
where, for each $1 \le j \le d$, $M_j \in \R^{(1+d)\times(1+d)}$ is a symmetric matrix-valued function depending on space and time.
$M_0, J_0$ are assumed to be symmetric positive definite, depending on space and time. $J_0$ is additionally assumed to be diagonal. 
The estimates obtained for the linearized system~\eqref{eq:main_system_linear} allow one to construct suitable balls in these function spaces, on which a fixed point argument can be applied. This ensures the unique solvability of the nonlinear system~\eqref{eq:main_system}, see Section~\ref{sec:fp_arg}. We have the following linearized well-posedness theorem.
\begin{theorem}\label{thm:lin-well}
Let $T>0$ and let {$\Omega\subset \R^d$}, $d \leq 3$, be a bounded domain with boundary of class $C^{4,1}$. Assume $\mathcal{J} \in L^1(0,T) $ satisfies Assumption \eqref{as:kernel}
and that the data satisfies
\[
u_0 \in {\Honefour}, 
\quad 
{f \in \mathcal{V}(0,T) := L^1 \! \left(0,T;\Honefour\right)
\cap L^{\infty}\! \left(0,T;\Honetwo \right)} .
\]
For each $j=0, 1, \dots, d$ assume that the matrices $M_j \in \calU(0,T) $ are symmetric, $M_0$ is positive definite, and that $J_0 \in \calU(0,T)$ is diagonal, positive definite, and satisfies 
\[ \label{eq:kernel_coff}
\| \dt J_0 \|_{L^{\infty}(0,T;L^\infty(\Omega))}< C_K \inf_{t \in (0,T), \, x \in \Omega} \lambda_{min} ( J_0(t,x) )
\]
for some $C_K>0$ depending on $\mathcal{J},T$.
Then, equation~\eqref{eq:main_system_linear} admits a unique solution $u \in \mathcal{U}_{\diamondsuit}(0,T)$ with \emph{a priori} estimate 
\[ \label{eq:apriori_linear}
\| u \|_{\calU(0,T)} \leq C \left( \| u_0 \|_{\Honefour} + \|  f \|_{\mathcal{V}(0,T)} \right)
\]
for a constant $C>0$ that depends only on $\lin_j, J_0, \mathcal{J}, T, \Omega, d$.
\end{theorem}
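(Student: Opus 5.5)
We argue by Faedo--Galerkin approximation combined with higher-order energy estimates, and then pass to the limit by weak-$\star$ compactness. Take the eigenfunctions $\{\phi_k\}$ of the Dirichlet Laplacian on $\Omega$, applied componentwise. They form an orthonormal basis of $L^2(\Omega)$. Because $\Delta\phi_k=-\lambda_k\phi_k$ vanishes on $\partial\Omega$, and $\Delta^2\phi_k$ does too, every finite combination lies in $\Honefive$. Hence the Galerkin projection commutes with $\Delta$ and preserves the compatibility conditions built into $\Honethree$, $\Honefour$, $\Honefive$.

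The Galerkin system is a linear Volterra integro-differential system $M_0^N c' + B^N c - (\mathcal{J}\ast K^N c)=f^N$. Since $M_0$ is uniformly positive definite, its Gram matrix is invertible, and standard Volterra theory for $L^1$ kernels gives a unique absolutely continuous solution on $(0,T)$.

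The core of the proof is the a priori estimate, obtained at three levels.
\begin{itemize}
\item \emph{Level $0$ and $2$.} Test with $u$, and with $\Delta^2 u$ (equivalently, apply $\Delta$ to the equation and test with $\Delta u$).
\item \emph{Level $4$.} Apply $\Delta^2$ to the equation and test with $\Delta^2 u$.
\end{itemize}
At each level the transport term is handled by symmetry. By Lemma~\ref{le:mat_ibp}, $2(M_j\partial_{x_j}w,w)_2 = -(\partial_{x_j}M_j w,w)_2$ plus a boundary term. That boundary term vanishes because $w=\Delta^k u$ is zero on $\partial\Omega$ for $k=0,1,2$. The time-derivative term produces $\frac12\frac{d}{dt}(M_0w,w)_2-\frac12(\partial_tM_0w,w)_2$.

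The commutators $[\Delta^2,M_j]\partial_{x_j}u$ and $[\Delta^2,M_0]u_t$ involve at most four derivatives of $u$ and three of $u_t$. They are controlled by $\|M_j\|_{\calU}$ using the Sobolev algebra property and \eqref{eq:sob_emb} for $d\le 3$. For $\|u_t\|_{H^3}$ we solve the equation for $u_t$, taking care with the memory term.

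The damping term requires the most care. It equals $J_0\Delta(\mathcal{J}\ast\Delta^2 u)$ plus commutator terms $[\Delta^2,J_0](\mathcal{J}\ast\Delta u)$. Integrating by parts once, using $\mathcal{J}\ast\Delta^2u=0$ on $\partial\Omega$, gives the positive term
\[
\int_0^t \big(J_0\,\mathcal{J}\ast\jac\Delta^2u,\ \jac\Delta^2u\big)_2 \ds
\]
plus lower-order pieces containing $\nabla J_0$. Here $J_0$ is diagonal, so it acts componentwise on $\jac$. By hypothesis \eqref{eq:kernel_coff}, Lemma~\ref{lemma:kernel} applies with $V=J_0$ and $y=\jac\Delta^2u$, bounding this term below by $C_{\mathcal{J}}\|\mathcal{J}\ast\jac\Delta^2u\|^2_{L^2(0,t;L^2)}$.

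The lower-order pieces take the form $(\nabla J_0\,\mathcal{J}\ast\Delta^2u,\ \jac\Delta^2 u)$. They are not directly absorbable, since we control only $\mathcal{J}\ast\jac\Delta^2u$ and not $\jac\Delta^2u$ itself. \textbf{This is the main obstacle.} The first approach to try is to move the test function onto the convolution: write $\jac\Delta^2 u$ through the equation, or integrate by parts in space so that every term has the form $(\text{coeff}\cdot\mathcal{J}\ast\Delta^2u,\ \mathcal{J}\ast\jac\Delta^2u)$ or $(\cdot,\Delta^2 u)$. Young's inequality with $\epsilon$ then yields $\epsilon\|\mathcal{J}\ast\jac\Delta^2u\|^2+C\|\mathcal{J}\|_{L^1}^2\|\Delta^2u\|^2_{L^2(0,t;L^2)}$, and the second quantity is controlled by the energy through Gronwall.

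The commutator memory terms $[\Delta^2,J_0]\mathcal{J}\ast\Delta u$ contain at most five derivatives in a convolution. They are treated the same way, using Young's convolution inequality $\|\mathcal{J}\ast y\|_{L^2(0,t)}\le\|\mathcal{J}\|_{L^1}\|y\|_{L^2(0,t)}$.

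Summing the three levels and applying Lemma~\ref{le:elliptisc} gives the estimate
\[
\|u\|_{L^\infty H^4}^2 + \|\mathcal{J}\ast\nabla\Delta^2u\|^2_{L^2L^2}\lesssim \|u_0\|^2_{\Honefour}+\|f\|_{\mathcal{V}}(\cdot)+\int_0^t(\cdots).
\]
A Gronwall argument, with the $L^1(0,T;\Honefour)$ norm of $f$ paired against $L^\infty H^4$, closes this bound uniformly in $N$.

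To bound $u_t$ in $L^2H^3\cap L^\infty H^2$, we read it off the equation: $u_t=M_0^{-1}\big(f-\sum_j M_ju_{x_j}+J_0\,\mathcal{J}\ast\Delta u\big)$. This is where $f\in L^\infty H^2$ is needed, and the $H^3$ bound uses $\mathcal{J}\ast\Delta u\in L^2H^3$.

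For the limit, the uniform bounds in $\calU_\diamondsuit$ give weak-$\star$ convergent subsequences, and the linear equation passes to the limit since convolution is weakly continuous. The boundary conditions are preserved because the trace spaces are closed.

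Uniqueness follows from the level-$0$ estimate applied to the difference of two solutions with zero data. Here Lemma~\ref{lemma:kernel} with $y=\jac u$ controls the damping term, and the lower-order $\nabla J_0$ term is handled as above.
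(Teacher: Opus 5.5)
Your plan is essentially the paper's: Dirichlet-eigenbasis Galerkin, an $H^4$-level energy built on $\Delta^2 u$, Lemma~\ref{lemma:kernel} with $V=J_0$ and $y=\nabla\Delta^2u$, reading $u_t$ off the equation, a weak-$\star$ limit, and $L^2$-uniqueness. However, your level-$2$ and level-$4$ estimates have a genuine gap: boundary terms.

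At the Galerkin level only $(r^N,\phi)_2=0$ for $\phi\in U_N$ holds, with $r^N$ the residual. So you may not ``apply $\Delta^2$ to the equation''. The admissible test function is $\Delta^4u^N\in U_N$, and moving two Laplacians onto the equation leaves, besides terms that do vanish,
\[
\int_{\partial\Omega}\Big(\sum_{j=1}^d M_j u_{x_j}\Big)\cdot(\Delta^3 u)_n \dS
\quad\text{and}\quad
\int_{\partial\Omega}\Delta\Big(M_0 u_t+\sum_{j=1}^d M_j u_{x_j}-J_0(\mathcal{J}\ast\Delta u)\Big)\cdot(\Delta^2 u)_n \dS ,
\]
together with analogous integrals at level $2$. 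These do not vanish, because on $\partial\Omega$ one only has $u_{x_j}=n_j\partial_n u$, not $u_{x_j}=0$. Your remark that the projection commutes with $\Delta$ is true only for functions vanishing on $\partial\Omega$ (and whose Laplacians vanish too, for $\Delta^2$). $M_ju_{x_j}$ is not such a function. The symmetry identity of Lemma~\ref{le:mat_ibp} removes only the harmless term $\int_{\partial\Omega}M_jw\cdot w\,n_j\dS$.

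Separately, the five-derivative pieces of $[\Delta^2,J_0](\mathcal{J}\ast\Delta u)$ cannot be treated by Young's convolution inequality, since that would require $u\in L^2(0,t;H^5(\Omega))$. They have to be absorbed into $C_{\mathcal{J}}\|\nabla\mathcal{J}\ast\Delta^2u\|^2_{L^2(0,t;L^2(\Omega))}$ via elliptic regularity for $\mathcal{J}\ast u\in\Honefive$, as the paper does.

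The boundary gap cannot be closed by a sharper estimate, because the statement fails as posed. Suppose $u\in\mathcal{U}_\diamondsuit(0,T)$ solves \eqref{eq:main_system_linear}. On $\partial\Omega$ we have $u_t=0$, $\mathcal{J}\ast\Delta u=0$ and $f=0$, so the equation forces $\big(\sum_j n_jM_j\big)\partial_n u=0$ for a.e.\ $t$. Since $u\in C([0,T];H^2(\Omega))$, this also holds at $t=0$.

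Now take $d=1$, $\Omega=(0,\pi)$, $M_0=J_0=I$, $M_1=\begin{pmatrix}0&1\\1&0\end{pmatrix}$, $\mathcal{J}(t)=e^{-t}$, $f=0$ and $u_0=(\sin x,0)^\top\in\Honefour$. The constraint then says $\partial_x u_0=0$ at $x=0$, which is false. The same works for $d=2,3$ with the acoustic matrices and $p_0$ the first Dirichlet eigenfunction, by Hopf's lemma. So no solution in $\mathcal{U}_\diamondsuit(0,T)$ exists. Consequently no $N$-uniform bound of $u^N$ in $\calU(0,T)$ can hold either, since a weak-$\star$ limit would be such a solution.

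The paper faces the same boundary integrals and removes them through \eqref{ineq:bnd_value_convection_term}--\eqref{ineq:bnd_value_convection_term_lapl}. But Appendix~\ref{se:appendix_galerkin} obtains $(\Delta r^N,\phi^N)_2=0$ by differentiating the eigen-expansion of $r^N$ termwise. Because $r^N|_{\partial\Omega}=\sum_jM_ju^N_{x_j}|_{\partial\Omega}$ is in general nonzero, Green's formula shows this step is equivalent to \eqref{ineq:bnd_value_convection_term} itself. The argument is therefore circular, and you should not import it.

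A correct version has to drop $\Delta u|_{\partial\Omega}=0$ from the solution space; on $\partial\Omega$ the equation instead dictates $J_0(\mathcal{J}\ast\Delta u)=\sum_jM_ju_{x_j}$. It must also use estimates that do not rely on $\Delta^k u$ vanishing on the boundary.
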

\begin{proof}
The proof of Theorem~\ref{thm:lin-well} is based on the Galerkin approximation combined with energy estimates and is carried out in the four following steps.
\begin{enumerate}[(i)]
\item Construct approximate finite-dimensional solutions using the Galerkin method.
\item Derive uniform \emph{a priori} estimates for these approximate solutions.
\item Pass to the limit, exploiting the compactness properties of the solution space.
\item Prove that the resulting limit is the unique solution of~\eqref{eq:main_system_linear} in the solution space.
\end{enumerate}
\subsubsection*{Step (i): Construction of approximate solutions}
Let $W := \{w_k\}_{k=1}^\infty \subset H_0^1(\Omega)$ denote the complete set of $L^2$-normalized eigenfunctions of the Dirichlet Laplacian
$ - \Delta : \Honetwo \to L^2(\Omega).$
By construction, $W$ forms an orthonormal basis of $L^2(\Omega)$ and an orthogonal basis of $H_0^1(\Omega)$.  

For given $N \in \mathbb{N}$, we aim to approximate solutions of \eqref{eq:main_system_linear} by functions of the form
\[ \label{eq:galerkin_approx}
u^N(t,x) = \sum_{k=1}^N \sum_{\ell=1}^{1+d} \xi_k^{N, \ell}(t) e_\ell w_k(x), \quad \xi_k^{N,\ell}(t) \in \mathbb{R}, \quad t \in (0,T), \, x \in \Omega,
\]
where $e_\ell \in \R^{1+d}$ denote unit normal vectors.
The Galerkin projection of~\eqref{eq:main_system_linear} onto the finite dimensional subspace 
\[
U_N := \operatorname{span}\{w_k e_\ell : k = 1, \dots, N; \ell = 1, \dots, 1+d\}  {\, \subset C^\infty(\Omega)}
\cap  H^1_0(\Omega)
\] 
is given by
\[ \label{eq:galerkin_projection}
\left\{
\begin{array}{ll}
(\lin_0 u^N_t, w_i e_\ell)_2 + \sum_{j=1}^d (\lin_j u^N_{x_j}, w_i e_\ell)_2 - (\dam \mathcal{J} \ast \lapl u^N,   w_i e_\ell )_2 = (f, w_i e_\ell)_2,
\\
u^N(0) = u_0^N
\end{array}
\right.
\]
for all $i=1, \dots, N$ and $\ell=1, \dots, 1+d$ with initial condition $u_0^N$, obtained by $L^2$-projection onto $U_N$. 
Denoting $\xi_k^N:=(\xi_k^{N,1}, \dots, \xi_k^{N,1+d}  )^\transpose \in \R^{1+d}$ and, further, 
\[
\boldsymbol{\xi} := \left( \xi_1^N, \dots, \xi_N^N \right)^\transpose \in \R^{(1+d)N}
\] 
we obtain the matrix form of~\eqref{eq:galerkin_projection} as
\[ \label{eq:galerkin_matrixform}
\mathbf{M}(t) \dot{ \boldsymbol{\xi}}(t)  +  \mathbf{A}(t)  \boldsymbol{\xi}(t) + \mathbf{J}(t) \conv \boldsymbol{\xi}(t)  = \mathbf{f}(t)
\]
with
{initial data $\boldsymbol{\xi}(0) = \boldsymbol{\xi}_0$ given as the vector of coordinates of the approximate initial data $u_0^N$.} {In~\eqref{eq:galerkin_matrixform}}, the matrices and vectors involved are {given} by
\begin{equation}
\begin{aligned}
&\mathbf{M}(t) = \id_{\R^N} \otimes \lin_0(t), \\  &\mathbf{A}_{(i-1)(1+d)+\ell, (k-1)(1+d)+\ell'}(t) = \sum_{j=1}^d ( \lin_j(t) e_{\ell'} \partial_{x_j} w_k, w_i e_\ell)_2, \\ 
&\mathbf{J}(t) = S(t) \otimes ({ \id_{\R^{1+d}} \mathcal{J}(t) }) \quad \mathrm{with }\quad 	S_{ik}(t)= (\grad w_i , \grad ( \dam(t) w_k ))_2 \in \R^{N \times N}, \\ 
&\mathbf{f}_{(i-1)(1+d)+\ell}(t) = (f(t) e_l, w_i)_2 .
\end{aligned}
\end{equation}
Here, $\otimes$ denotes the tensor product and $I_{\R^n}$ denotes the identity matrix $I_{\R^n} : \R^n \to \R^n$ and $\mathbf{1}:= (1, \dots, 1)^\transpose \in \R^{(1+d)N}$. 

To show well-posedness of~\eqref{eq:galerkin_matrixform}, we set $\boldsymbol{\chi} = \dot{\boldsymbol{\xi}}$, which implies $\boldsymbol{\xi} = \mathbf{1} \conv \boldsymbol{\chi} + \boldsymbol{\xi}_0$. We use the fact that $\mathbf{M}$ is positive definite, to rewrite~\eqref{eq:galerkin_matrixform} as
\[ \label{eq:galerkin_volterra}
\boldsymbol{\chi}(t) +  \int_0^t \mathbf{L}(t,s) \boldsymbol{\chi}(s) \ds  = \mathbf{g}(t),
\]
with 
\[
\mathbf{L}(t,s) := \mathbf{M}^{-1}  \left[ (\mathbf{J} \conv { (\mathbf{1} \otimes \mathbf{1} }) (t-s) + \mathbf{A}(t) \right]\] 
and 
\[\mathbf{g}(t):=\mathbf{M}^{-1} \left[\mathbf{f}(t) - (\mathbf{J}\conv \boldsymbol{\xi}_0)(t) -  \mathbf{A}(t) \boldsymbol{\xi}_0\right].
\]
Equation~\eqref{eq:galerkin_volterra} can be interpreted as a non-convolution Volterra equation, see \cite[Ch. 9]{Gripenberg1990}.  
We first note that the non-convolution kernel $\mathbf{L}(t,s)$ is of \emph{$L^p$-type}
\footnote{Here, a kernel of $L^p$-type should not be confused with a $L^p$ function.}, 
i.e., the operator
\[
y \;\mapsto\; \int_0^t |\mathbf{L}(t,s)| \, y(s) \ds
\]
is continuous from $L^p(0,T)$ into itself for $p \in [1,\infty]$, see \cite[Ch.~9, Def.~2.2]{Gripenberg1990} and \cite[Ch.~9, Eq.~2.2]{Gripenberg1990}.  
This property comes from the fact that $\mathcal{J} \in L^1(0,T)$ and $\mathbf{A} \in L^\infty(0,T)$. Consequently, the kernel $\mathbf{L}$ admits a resolvent of $L^p$-type, by \cite[Ch.~9, Cor.~3.14]{Gripenberg1990}. 

The existence and uniqueness of a solution $\boldsymbol{\chi} \in L^2(0,T)$ to \eqref{eq:galerkin_volterra} follow from \cite[Ch.~9, Thm.~3.6]{Gripenberg1990}, together with the fact that $\mathbf{g} \in L^2(0,T)$ . 
Thus, it follows that $\boldsymbol{\xi} \in H^1(0,T)$, which in turn ensures that 
$ u^N \in H^{{1}}\big(0,T; C^\infty(\Omega) \cap H_0^1(\Omega)\big).$

\subsubsection*{Properties of approximate solutions}
For future reference, we note that since $\Omega$ has a boundary of class $C^{4,1}$, elliptic regularity implies $u^N \in H^1\left(0,T;\Honefive\right)$, see, for instance, in \cite[Thm.~2.5.1.1]{grisvard2011elliptic}.
Furthermore, at the Galerkin approximation level, by the construction of the function space $W$, we have $\lapl^j u^N \in   H^1\left(0,T;\Honefive \right)$ and
\[\label{eq:zero_bnd_properties}
u^N|_{\partial \Omega} = (-\Delta)^j u^N|_{\partial \Omega} = 0
\] 
for any natural number $j \geq 0$ for all $u^N \in U_N$. This justifies using partial integration as given by Lemma \ref{le:lapl_ibp}, \ref{le:mat_ibp} in the next sections.  
The same holds for $u_t^N$ and $\mathcal{J} \conv u^N$, i.e.,
\[ \label{eq:zero_bnd_properties_extended} 
(-\Delta)^j u_t^N|_{\partial \Omega}   = \mathcal{J} \conv (-\Delta)^j u^N|_{\partial \Omega} = 0  . 
\]
Furthermore, from~\eqref{eq:zero_bnd_properties}, ~\eqref{eq:zero_bnd_properties_extended}, and $f \in L^1(0,T;\Honefour)$ the identities 
\[\label{ineq:bnd_value_convection_term} 
\sum_{j=1}^d  \int_{\partial \Omega} \lin_j u_{x_j}  \cdot \jac \phi^N n  \dS =0, 
\]
\[ \label{ineq:bnd_value_convection_term_lapl}
\int_{\partial \Omega} \lapl \left( \lin_0 u_t^N + \sum_{j=1}^d \lin_j u_{x_j}^N - \dam (\mathcal{J} \conv \lapl u^N) \right) \cdot \jac \phi^N n \dS = 0
\]
holding for all $\phi^N \in U_N$ can be derived from the projected equation \eqref{eq:galerkin_projection}. The details are explained in the {appendix} in section \ref{se:appendix_galerkin}.
\subsubsection*{Step (ii): Establishing uniform estimates}	
Fix $N \in \N$. We test equation~\eqref{eq:galerkin_projection} with $\lapl^4 u^N \in U_N$ to obtain
\begin{equation} \label{eq:third_order_energy}
(\lin_0 u_t^N, \lapl^4 u^N)_2 
+ \sum_{j=1}^d (\lin_j u_{x_j}^N ,  \lapl^4 u^N)_2 
- ( \dam (\mathcal{J} \conv \lapl u^N),  \lapl^4 u^N)_2 = (f, \lapl^4 u^N)_2 .
\end{equation}
We now estimate each term in~\eqref{eq:third_order_energy} from above or below. For better readability, we leave out the index $N$ in $u^N$ when deriving the uniform estimates. 
The first term in~\eqref{eq:third_order_energy}, after integrating by parts twice with Lemma~\ref{le:lapl_ibp} and using~\eqref{eq:zero_bnd_properties}, \eqref{eq:zero_bnd_properties_extended}, as well as the short hand notation $\laplu=\Delta^2 u$ gives
\[ \label{eq:firstterm}
\begin{aligned}
(\lin_0 u_t,  \lapl^4 u )_2 
&= ( \lapl (\lin_0  u_t),  \lapl^3 u)_2 + \int_{\partial \Omega}   \lin_0 u_t \cdot (\lapl^3 u)_n - (\lin_0 u_t)_n \cdot \lapl^3 u  \dS
\\
&= ( \lapl^2 (\lin_0   u_t), v)_2 + \int_{\partial \Omega} \lapl (\lin_0  u_t) \cdot   \laplu_n - ( \lapl (\lin_0  u_t))_n \cdot \laplu \dS 
\\
& = (  \lin_0 \lapl^2  u_t, v)_2 + ([\Delta^2, M_0](u_t), \laplu)_2 
+ \int_{\partial \Omega} \lapl (\lin_0  u_t) \cdot   \laplu_n \dS
\\
& = \frac{1}{2} \frac{d}{dt} ( \lin_0^\half \laplu, \lin_0^\half \laplu)_2 - \half (\dt M_0 \laplu, \laplu)_2  + ([\Delta^2, M_0](u_t), \laplu)_2 \\
&+ \int_{\partial \Omega} \lapl (\lin_0  u_t) \cdot   \laplu_n \dS,
\end{aligned} 
\]
where the remaining boundary term is handled later by \eqref{ineq:bnd_value_convection_term_lapl} and $[\cdot, \cdot]$ denotes the commutator introduced in \eqref{eq:commutator}.
For the second term in \eqref{eq:third_order_energy}, integrating by parts with Lemma~\ref{le:lapl_ibp}, \ref{le:mat_ibp}, and using the boundary information~\eqref{eq:zero_bnd_properties}, \eqref{eq:zero_bnd_properties_extended},~\eqref{ineq:bnd_value_convection_term} gives
\[ \label{eq:thirdterm}
\begin{aligned}
& \sum_{j=1}^d  ( \lin_j u_{x_j}, \lapl^4 u )_2 \\
=  &\sum_{j=1}^d ( \lapl ( \lin_j u_{x_j} ) ,  \lapl^3 u)_2 
+ \! \int_{\boundary \Omega} \! {\lin_j u_{x_j} \cdot (\lapl^3 u)_n } - ( \lin_j u_{x_j} )_n \cdot \lapl^3 u \dS
\\
= 
&\sum_{j=1}^d( \lapl^2 (\lin_j u_{x_j} ),  \laplu )_2 + 
\int_{\boundary \Omega} \lapl ( \lin_j u_{x_j} ) \laplu_n - (\lapl ( \lin_j u_{x_j} )_n \laplu \dS \\
= &\sum_{j=1}^d ( \lin_j  \laplu_{x_j} ,  \laplu )_2 +  ( [\lapl^2, \lin_j]( u_{x_j}),  \laplu )_2  + 
\int_{\boundary \Omega} \lapl ( \lin_j u_{x_j} )  \laplu_n\dS
\\
=  &\sum_{j=1}^d (  - \half \dx{j} \lin_j  \laplu +   [\lapl^2,\lin_j]( u_{x_j} ),  \laplu )_2  + 
\int_{\boundary \Omega} \lapl ( \lin_j u_{x_j} )  \laplu_n + \underbrace{
\half \lin_j v \cdot v n_j
}_{=0}\dS,
\end{aligned}
\]
{where, in the last line, we argue the vanishing of the second boundary term thanks to~\eqref{eq:zero_bnd_properties_extended}. In order to treat the remaining boundary part, we intend to use~\eqref{ineq:bnd_value_convection_term_lapl}. To this end, we need to exploit the remaining $M_0$ and $J_0$ terms in~\eqref{eq:third_order_energy}. }

Using the partial integration Lemmata~\ref{le:lapl_ibp}--\ref{le:mat_ibp}, and the boundary information~\eqref{eq:zero_bnd_properties}--\eqref{eq:zero_bnd_properties_extended}, the third term in \eqref{eq:third_order_energy} becomes
\[ \label{eq:secondterm}
\begin{aligned}
&\quad - (  \dam \mathcal{J} \conv \lapl u,  \lapl^4 u )_2 
\\
&= - ( \lapl (\dam \mathcal{J} \conv \lapl u ), \lapl^3 u)_2 + \int_{\partial \Omega} \dam \mathcal{J} \conv \lapl u \cdot (\lapl^3 u)_n -  (\dam \mathcal{J} \conv \lapl u)_n \cdot \lapl^3 u \dS 
\\
&= (  \jac \Delta (\dam \mathcal{J} \conv \lapl u) , \jac \lapl^2 u)_2 - \int_{\partial \Omega} \Delta ( \dam \mathcal{J} \conv \lapl u ) \cdot (\lapl^2 u)_n \dS 
\\
&= ( \dam \jac   \mathcal{J} \conv \laplu , \jac \laplu)_2 + ( [\jac \Delta, \dam](\mathcal{J} \conv \Delta u)), \jac \laplu)_2 - \int_{\partial \Omega} \Delta ( \dam \mathcal{J} \conv \lapl u ) \cdot \laplu_n \dS 
\\
&= ( \dam \jac  \mathcal{J} \conv \laplu , \jac \laplu)_2 - ( \jac \cdot [\jac \Delta, \dam](\mathcal{J} \conv \Delta u), \laplu)_2 \\
&\qquad \qquad- \int_{\partial \Omega} \Delta ( \dam \mathcal{J} \conv \lapl u ) \cdot \laplu_n - \underbrace{[\jac \Delta, \dam](\mathcal{J} \conv \Delta u) n \cdot v }_{=0} \dS
.
\end{aligned}
\]
For the right hand side in \eqref{eq:third_order_energy}, we have since $f \in L^1(0,T;\Honefour)$
\[ \label{eq:forthterm}
\begin{aligned}
(f , \lapl^4 u)_2 &= (\lapl f, \lapl^3 u)_2 + \int_{\boundary \Omega} f \cdot (\lapl^3 u)_n - (f)_n \cdot \lapl^3 u \, dS\\
&= (\lapl^2 f, \lapl^2 u)_2 + \int_{\boundary \Omega} \lapl f \cdot (\lapl^2 u)_n - (\lapl f)_n \cdot \lapl^2 u \, dS \\
& = (\lapl^2 f, \laplu)_2.
\end{aligned}
\]
Combining \eqref{eq:firstterm}, \eqref{eq:secondterm}, \eqref{eq:thirdterm}, and \eqref{eq:forthterm} together with boundary information~\eqref{ineq:bnd_value_convection_term_lapl}, we obtain 
\begin{equation} \label{eq:third_order_energy_summary}
\begin{aligned}
&\half \frac{d}{dt} \|  \lin_0^\half \laplu \|_ {L^2(\Omega)}^2 + (\dam  \jac  \mathcal{J} \conv \laplu, \jac \laplu)_2  \\
&\leq  \big|( \sum_{j=0}^d [\Delta^2 , M_j](u_{x_j}) + \half \dx{j} M_j v + \jac \cdot [\jac \Delta, \dam](\mathcal{J} \conv \Delta u) + \lapl^2 f , \laplu)_2 \big| 
\end{aligned}
\end{equation}
with the convention $u_{x_0}=u_t$. Integrating \eqref{eq:third_order_energy_summary} from $0$ to $t\in (0,T)$ we obtain using Lemma \ref{lemma:kernel}, since $\dt J_0 \in L^\infty(0,T;H^2(\Omega)) \embed L^\infty(0,T;L^\infty(\Omega)) $ and due to \eqref{eq:kernel_coff}, 
\[ \label{eq:integrated_energy}
\begin{aligned}
&\half \|  \lin_0^\half \lapl^2 u(t) \|_{L^2(\Omega)}^2 + C_{\mathcal{J}} \|\jac  \mathcal{J} \conv \lapl^2 u \|^2_{L^2(0,t;L^2(\Omega))} \\
&\leq   \half \|  \lin_0^\half \lapl^2 u_0 \|_{L^2(\Omega)}^2  +  \int_0^t \| \mathfrak{P}(s) \|_{L^2(\Omega)} \| \lapl^2 u(s) \|_{L^2(\Omega)} \ds,
\end{aligned}
\]
where $\mathfrak{P}=\sum_{j=0}^d [\Delta^2,M_j](u_{x_j}) + \half \dx{j} M_j \Delta^2 u + \jac \cdot [\jac \Delta, \dam]( \mathcal{J} \conv \Delta u) + \lapl^2 f$ denotes the remaining terms. We estimate these using elliptic regularity of Lemma \ref{le:elliptisc}, Sobolev embeddings and algebras
\[
\half  \|   \dt M_0 \Delta^2u \|_{L^2(\Omega)} \leq C_R \|   \dt  M_0 \|_{H^3(\Omega)} \| \Delta^2 u \|_{L^2(\Omega)} ,
\]
\[
\| [\Delta^2, M_0] (u_{t}) \|_{L^2(\Omega)}  \leq C_R \|  M_0 \|_{H^4(\Omega)} \| \jac \Delta u_t \|_{L^2(\Omega)},
\]
\[
\sum_{j=1}^d \| [\Delta^2, M_j] (u_{x_j}) \|_{L^2(\Omega)} + \|   \dx{j} M_j \Delta^2u \|_{L^2(\Omega)} \leq \sum_{j=1}^d C_R \|  M_j \|_{H^4(\Omega)} \|  \Delta^2 u \|_{L^2(\Omega)},
\]
\[
\| \jac \cdot [\jac \Delta, \dam]( \mathcal{J} \conv \Delta u) \|_{L^2(\Omega)} \leq C_R \|J_0 \|_{H^4(\Omega)} \| \jac \mathcal{J} \conv \Delta^2 u \|_{L^2(\Omega)}
\]
with some constant $C_R>0$ depending on elliptic regularity and embedding constants.
Thus, we have using Young's inequality twice
\[ \label{eq:estimate_P}
\begin{aligned}
\int_0^t \| \mathfrak{P} \|_{L^2(\Omega)} \| \lapl^2 u \|_{L^2(\Omega)} \ds \leq \frac{1}{4 \varepsilon_1} \| f \|_{L^1(0,T;\Honefour)}^2 + \varepsilon_1 \| \Delta^2 u \|_{L^\infty(0,t;L^2(\Omega))}^2
\\ 
+  \int_0^t C_R \left(  \|   \dt  M_0 \|_{H^3(\Omega)}  + \sum_{j=1}^d \|  M_j \|_{H^4(\Omega)} + \frac{1 }{2 C_{\mathcal{J}}}\| J_0 \|_{H^4(\Omega)}  \right) \| \Delta^2 u \|^2_{L^2(\Omega)}
\\
+ \frac{C_{\mathcal{J}}}{2}\| \jac \mathcal{J} \conv \Delta^2 u\|^2_{L^2(\Omega)}  + C_R \| M_0 \|_{H^4(\Omega)} \| \jac \Delta u_t \|_{L^2(\Omega)}  \| \Delta^2 u \|_{L^2(\Omega)} \ds 
\end{aligned}
\]
with $\varepsilon_1>0.$
In order to proceed, we need to bound $u_t$ in $H^3(\Omega)$. By \eqref{eq:galerkin_projection} we have for any $\phi \in U_N$
\newcommand{\Honethreedual}{H^{-3}(\Omega)}
\[ \label{eq:energy_esimiate_ut}
\begin{aligned}
(\lin_0 u_t, \phi)_2 &\leq | ( \dam \mathcal{J} \conv \lapl u - \sum_{j=1}^d \lin_j u_{x_j} + f, \phi)_2 |  \\
&\leq  \| \dam \mathcal{J} \conv \lapl u  - \sum_{j=1}^d \lin_j u_{x_j} +  f \|_{\Honethree} \| \phi \|_{\Honethree^*} .
\end{aligned}
\]
Thus, using triangle inequality, Sobolev algebra properties of $H^3(\Omega)$, and elliptic regularity Lemma \ref{le:elliptisc} up to $H^5(\Omega)$
\[ \label{eq:estimate_u_t_H3}
\begin{aligned}
\| \lin_0 u_t \|_{\Honethree} &\leq C_R \Bigl(\|  \dam \|_{H^3(\Omega)} \| \jac \mathcal{J} * \lapl^2 u \|_{L^2(\Omega)} \\
&+ \sum_{j=1}^d \|  M_j \|_{H^3(\Omega)} \| \Delta^2 u \|_{L^2(\Omega)} \Bigr) + \| f \|_{\Honethree}.
\end{aligned}
\]
Since $M_0 \in L^\infty(0,T;H^3(\Omega))$ is positive definite, we obtain, due to Sobolev algebra properties, 
\[
\begin{aligned}\label{est::ut_H3}
\| u_t \|_{\Honethree} &= \| M_0^{-1} M_0 u_t \|_{\Honethree} \leq C_R \| M_0^{-1} \|_{H^3(\Omega)} {\color{black}\| \lin_0 u_t \|_{\Honethree}} \\ &\leq C_R C_{M_0} \| M_0 \|_{H^3(\Omega)} {\color{black}\| \lin_0 u_t \|_{\Honethree} },
\end{aligned}
\]
where $C_{M_0}$ depends on the smallest eigenvalue of $M_0$. 
Combing \eqref{eq:integrated_energy}, \eqref{eq:estimate_P} and \eqref{eq:estimate_u_t_H3} with Young's inequality we get
\[ \label{eq:energy_estimate1}
\begin{aligned}
\half c_{\lin_0} \|   \lapl^2 u(t) \|_{L^2(\Omega)}^2&
+ \half C_{\mathcal{J}} \|\jac  \mathcal{J} \conv \lapl^2 u \|^2_{L^2(0,t;L^2(\Omega))} 
\leq  \frac{1}{4 \varepsilon_1} \| f \|^2_{L^1(0,T;\Honefour)} + \varepsilon_1 \| \Delta^2 u \|_{L^\infty(0,T;L^2(\Omega))}^2 \\
&+ \half \| \lin_0 \|_{L^\infty(0,T;L^\infty(\Omega))} \|  \lapl^2 u_0 \|_{L^2(\Omega)}^2 
\\&+  \int_0^t C_R \left(  \| \dt  M_0 \|_{H^3(\Omega)}   + \sum_{j=1}^d \|  M_j \|_{H^4(\Omega)} + \frac{1 }{2 C_{\mathcal{J}}}\|  J_0 \|_{H^4(\Omega)}  \right) \|\Delta^2 u \|^2_{L^2(\Omega)}  
\\
& 
+ C_R^3 C_{M_0} \| M_0 \|_{H^4(\Omega)}^2  \left( \frac{1}{C_{\mathcal{J}}}\|  \dam \|_{H^4(\Omega)} + \frac{1}{4 C_R}
+ \sum_{j=1}^d \|  M_j \|_{H^4(\Omega)}  \right) \|\Delta^2 u \|_{L^2(\Omega)}^2  \ds
\\
&+ \frac{1}{4} C_{\mathcal{J}} \| \jac \mathcal{J} * \lapl^2 u \|_{L^2(0,t;L^2(\Omega))}^2 +  \| f \|_{L^2(0,T;\Honethree)}^2
\end{aligned}
\]
with $c_{M_0}$ the minimal eigenvalue of $M_0$. Then, applying Grönwall's inequality in \eqref{eq:energy_estimate1}, and taking the supremum over $(0,T)$ we get
\[ \label{eq:energy_estimate_gron}
\begin{aligned}
\half c_{\lin_0} \|   \lapl^2 u \|_{L^\infty(0,T;L^2(\Omega))}^2
&+ \quater C_{\mathcal{J}} \|\jac  \mathcal{J} \conv \lapl^2 u \|^2_{L^2(0,T;L^2(\Omega))} 
\\ &\leq \Bigl( \frac{1}{4 \varepsilon_1} \| f \|^2_{L^1(0,T;\Honefour)} + \half   \| \lin_0 \|_{L^\infty(0,T;L^\infty(\Omega))}\| \lapl^2 u_0 \|_{L^2(\Omega)}^2 
\\
& \hphantom{ppp}+ \varepsilon_1 \| \Delta^2 u \|_{L^\infty(0,T;L^2(\Omega))}^2 
+  \| f \|^2_{L^2(0,T;\Honethree)} \Bigr)  \exp \left( C_Q \int_0^T  Q_{}(s)  \ds \right)
\end{aligned}
\]
with \[ \label{eq:Q}
Q=   \| \dt  M_0 \|_{H^3(\Omega)} + \|  M_0 \|_{H^4(\Omega)}^2 + \left( 1 + \|  M_0 \|_{H^4(\Omega)}^2 \right) \left(    \sum_{j=1}^d \|  M_j \|_{H^4(\Omega)} + \|  J_0 \|_{H^4(\Omega)} \right),
\]
where $C_Q$ only depends on $C_\mathcal{J}, C_R, C_{M_0}$. 
Choosing $\varepsilon_1 = \quater c_{M_0} \exp\left(- C_Q \int_0^T Q(s) \ds\right) $ we
can absorb the term
and defining the energy
$$
\E[u](t) = \frac{c_{M_0}}{2} \sup_{s \in (0,t)}\|  u(s) \|^2_{L^2(\Omega)} + \frac{C_{\mathcal{J}}}{2}\int_0^t \| \jac (\mathcal{J} * u)(s) \|_{L^2(\Omega)}^2  \ds  , 
$$
we have
\[ \label{eq:energy_estimate_H4}
\begin{aligned}
 \E[\Delta^2 u](T) \leq C_G \Big(&  \| M_0 \|_{L^\infty(0,T;L^\infty(\Omega))}\| u_0 \|_{\Honefour}^2  + \| f \|^2_{L^1(0,T;\Honefour)} \\ &+ \| f \|_{L^2(0,T;\Honethree)}^2 \Big) 
\exp{\left( \int_0^T Q_{}(s) \ds  \right) }
\end{aligned}
\]
with $C_G>0$. 
We note that $u \in W^{1, \infty}(0,T;\Honetwo)$ follows, since from \eqref{eq:energy_esimiate_ut} by Young's inequality, Sobolev algebra $H^2(\Omega)$, and elliptic regularity
\[ \label{eq:estimate_u_t_H2}
\begin{aligned}
&\| M_0 u_t \|_{L^\infty(0,T;\Honetwo)} \leq \| J_0 \|_{L^\infty(0,T;H^2(\Omega))} \| \mathcal{J} \|_{L^1(0,T)} \| \Delta^2 u \|_{L^\infty(0,T;L^2(\Omega))} 
\\
&+ \sum_{j=1}^d \|  M_j \|_{L^\infty(0,T;H^2(\Omega))} \| \Delta^2 u \|_{L^\infty(0,T;L^2(\Omega))} + \| f \|_{L^\infty(0,T;\Honetwo)}.
\end{aligned}
\]
Similarly as before, we have
\[
\| u_t \|_{\Honetwo} \leq C_R \| M_0^{-1} \|_{H^2(\Omega)} \| \lin_0 u_t \|_{\Honetwo} \leq C_R C_{M_0} \| M_0 \|_{H^2(\Omega)} \| \lin_0 u_t \|_{\Honetwo}
\]
with {the constants $C_R$, $C_{M_0}$ related to the Algebra properties of $H^2(\Omega)$ and to the minimal eigenvalue of $M_0$ (similarly to~\eqref{est::ut_H3})}. 
This gives
\[ \label{eq:estimate_u_t_H2_v2}
\begin{aligned}
\| u_t \|_{L^\infty(0,T;\Honetwo)} \leq C_2 \|M_0\|_{L^\infty(0,T;H^2(\Omega) )} \Bigl( \| f \|_{L^\infty(0,T;\Honetwo)} +  \bigl( \| J_0 \|_{L^\infty(0,T;H^2(\Omega)} \\
+ \sum_{j=1}^d \|  M_j \|_{L^\infty(0,T;H^2(\Omega))} \bigr)  \sqrt{ \E[\Delta^2 u](T)}  \Bigr) .
\end{aligned}
\]
Finally, from \eqref{eq:estimate_u_t_H3} we infer
\[ \label{eq:estimate_u_t_H3_v2}
\begin{aligned}
    \| u_t \|_{L^2(0,T;\Honethree)} \leq C_3 \|M_0\|_{L^\infty(0,T;H^3(\Omega))} \Bigl( \| J_0 \|_{L^\infty(0,T;\Honethree)}  \sqrt{ \E[\Delta^2 u](T)} \\
    + \sum_{j=1}^d \| M_j \|_{L^2(0,T;H^3(\Omega))}  \sqrt{ \E[\Delta^2 u](T)}  + \| f \|_{L^2(0,T;\Honethree)} \Bigr) .
\end{aligned}
\]
Thus, combining \eqref{eq:estimate_u_t_H2_v2} and \eqref{eq:estimate_u_t_H3_v2} with the energy estimate \eqref{eq:energy_estimate_H4} we obtain the a priori bound \eqref{eq:apriori_linear} for $u^N$.

Note that we have used that $ \mathcal{V}(0,T) \embed L^2(0,T;\Honethree) $ by interpolation.
\subsubsection*{Step (iii): Passing to the limit}
The previous estimates \eqref{eq:estimate_u_t_H2_v2} and \eqref{eq:estimate_u_t_H3_v2} with \eqref{eq:energy_estimate_H4}, imply that the sequence of approximate solutions $u^N$, once again emphasizing the dependence on the Galerkin approximation level $N$, remains uniformly bounded with respect to $N$. In particular, we have the following weak(-$\star$) convergence of a subsequence, which we do not relabel, due to the Banach-Alaoglu theorem (see, e.g., \cite[Thm. 3.2.1]{Buhler})
\begin{alignat}{2}
u^{N} &\stackrel{}{\relbar\joinrel\rightharpoonup} u \quad &&\textrm{weakly-$\star$ in } L^\infty \!\left(0,T; \Honefour\right) = L^1\left(0,T;\Honefour^*\right)^*,\\
u_t^{N} &\stackrel{}{\relbar\joinrel\rightharpoonup} u_t \quad &&\textrm{weakly in }\quad L^2\!\left(0,T; \Honethree\right), \\
u_t^{N} &\stackrel{}{\relbar\joinrel\rightharpoonup} u_t \quad &&\textrm{weakly-$\star$ in } L^\infty\!\left(0,T; \Honetwo\right) =  L^1\left(0,T;\Honetwo^*\right)^*
\end{alignat}
with $u \in \calU_\diamondsuit(0,T).$
Using the Rellich--Kondrachov theorem \cite[Thm. 6.3]{adams2003sobolev} and the Aubin--Lions--Simon Lemma~\cite[Thm.~5]{simon1986compact}, there exists a strongly convergent subsequence, again not relabeled, in the following sense
\begin{alignat}{2}
\label{eq:strong_convergence}
u^{N} &\longrightarrow u \quad &&\textrm{strongly in } C([0,T]; \Honetwo).
\end{alignat}

Estimate~\eqref{eq:apriori_linear} remains valid in the Galerkin limit ($N\to \infty$) because of the weak lower semicontinuity of norms.

Attainment of the initial data is shown along the lines of~\cite[Thm.~9.12]{salsa2016partial}. The details are omitted here.
\subsubsection*{Step (iv): Uniqueness of solutions}
Assume that there exist two solutions $u_1, u_2 \in \calU(0,T)$ to~\eqref{eq:main_system_linear}. Then their difference, which we denote $\bar u := u_1-u_2 \in \calU(0,T)$, solves the following system
\[ \label{eq:uniqueness}
\left\{\begin{array}{ll}
\lin_0 \bar u_t + \sum_{j=1}^d\lin_j \bar u_{x_j} - \dam (\mathcal{J} \conv \lapl \bar u) = 0&\mathrm{ on } \qquad (0,T) \times \Omega,
\\
\bar u (0) = 0 &\mathrm{ on } \qquad   \Omega,
\\
\bar u =0 &\mathrm{ on } \qquad (0,T) \times   \partial \Omega. 
\end{array}
\right. 
\]
Testing~\eqref{eq:uniqueness} with $\bar u$ yields
\[ \label{eq:uniquness_tested}
( \lin_0 \bar u_t +  \sum_{j=1}^d \lin_j \bar u_{x_j} - \dam (\mathcal{J} \conv \lapl \bar u ) , \bar u)_2 = 0 .
\]
Integrating \eqref{eq:uniquness_tested} from $0$ to $t \in (0,T)$ and using partial integration Lemma \ref{le:mat_ibp} gives 
\[
\begin{aligned}
\frac 12 \|\lin_0^\half \bar u(t)\|^2_{L^2(\Omega)} + \int_0^t (   \jac \mathcal{J} * \bar u , J_0 \jac \bar u + \jac J_0 \bar u)_2 \ds
= \sum_{j=0}^d \int_0^t (\dx{j} \lin_j \bar u , \bar u )_2 \ds .
\end{aligned}
\]
Using Lemma \ref{lemma:kernel} and Young's inequality, we get 
\[ \label{eq:uniqueness_result}
\begin{aligned}
\frac 12 c_{\lin_0}  &\| \bar u(t)\|^2_{L^2(\Omega)} + C_{\mathcal{J}} \int_0^t \| \jac \mathcal{J} * \bar u \|^2_{L^2(\Omega)} \ds \\ \leq& \int_0^t  \big| \sum_{j=0}^d (\dx{j} M_j \bar u +   \jac J_0 \jac \mathcal{J} * \bar u ,  \bar u) \big| \ds\\
\leq& \left(\sum_{j=0}^d \| \partial_{x_j} M_j \|_{L^\infty(0,T;L^\infty(\Omega))} + \frac{\| \jac J_0 \|_{L^\infty(0,T;L^\infty(\Omega))}}{2 C_{\mathcal{J}}} \right) \int_0^t \| \bar u \|^2_{L^2(\Omega)} \ds \\ & + \int_0^t \frac{C_{\mathcal{J}}}{2} \| \jac \mathcal{J} * \bar u \|^2_{L^2(\Omega)}    \ds .
\end{aligned}
\]
Absorbing the $\mathcal{J}*\bar u$ term and applying Grönwall's inequality yields $\| \bar u(t) \|_{L^2(\Omega)}^2 \leq 0$ for all $t \in (0,T)$, so that $\bar u=0$.
\end{proof}
\subsubsection{Fixed-point argument}\label{sec:fp_arg}
We proceed to prove well-posedness of the nonlinear system~\eqref{eq:main_system} in the special case of Westervelt's assumption ($\mu = \kappa = 0$; recall \eqref{eq:Aj_matrices}).
This is achieved by showing that there exists a $T'>0$ such that the mapping 
\[ \label{eq:fixed_point_mapping}
\begin{aligned}
\calT : \calB_R(0,T') \to \calB_R(0,T'),  \quad  
z = (q,w)^\transpose \mapsto u = (p,v)^\transpose,
\end{aligned}
\]
admits a unique fixed point with \(B_R(0,T) \) defined in \eqref{eq:ball}. 
In \eqref{eq:fixed_point_mapping} the image $\mathcal{T} z$ is defined as the solution of the linearized equation~\eqref{eq:main_system_linear} with 
\[ \label{eq:fp_chooseS}
M_j= S_W(z,x) A_j(z,x), \quad M_0= A_0 S_W(z,x) \text{ and } J_0=S_W(z,x), \quad f=S_W(z,x) g
\]
with \( S_W\) as in \eqref{eq:S} and \(A_0,A_j,g\) as in \eqref{eq:main_system}.
The ball of solutions is defined by
\[ \label{eq:ball}
\begin{aligned}
\calB_R (0,T) := \{ z=(q,w) \in \calU_\diamondsuit(0,T) : \| z \|_{\calU(0,T)} \leq R \text{ with } z(0)=u_0 \\
\| q \|_{L^\infty(0,T;L^\infty(\Omega))} \leq \frac{1 - \| \alpha \|_{L^\infty}}{ \lambda}, \| q_t \|_{L^\infty(0,T;L^\infty(\Omega))} \leq D_0 \}
\end{aligned}
\]
with $T,R, D_0>0$ to be chosen appropriately in the course of the following proof.

A fixed point $z \in \calB_R$ of $\mathcal{T}$ satisfies
\[ \label{eq:fixedpoint_eq}
\left\{\begin{array}{ll}
 S_W A_0 z_t + \sum_{j=1}^d S_W A_j  z_{x_j} - S_W (\mathcal{J} \conv \lapl z) = S_W g&\mathrm{ on } \qquad (0,T) \times \Omega,
\\
z(0) = u_0 &\mathrm{ on } \qquad   \Omega,
\\
z =0 &\mathrm{ on } \qquad (0,T) \times   \partial \Omega. 
\end{array}
\right. 
\]
Since, $S_W$ is positive definite for $z \in \calB_R$, \( S_W^{-1} \) exists and multiplying by $S_W^{-1}$ gives that $z$ is a solution to \eqref{eq:main_system}.

\subsubsection*{Proof of Theorem \ref{th:nonlin-well}}
\begin{proof}

We show that $\mathcal{T}$ is well defined for any $T>0$ by checking the assumptions of Theorem \ref{thm:lin-well}, which guarantees a unique $u \in \calU_\diamondsuit(0,T)$ for every $z \in \calB_R$. From \eqref{eq:fp_chooseS} and \eqref{eq:S} we
see that $M_j$ is symmetric and $J_0$ is diagonal, while $M_0,J_0$ are positive definite if $z \in \calB_R$. Since $\alpha,\beta \in H^{4}(\Omega)$, it is readily verified that $M_j,M_0, J_0 \in \calU(0,T)$ and $f \in \mathcal{V}(0,T)$ for all $z \in \calU(0,T)$. 

Lastly, to fulfill the smallness condition of $\dt J_0$ assumed \eqref{eq:kernel_coff} we see that for any $z=(q,w )\in \mathcal{B}_R$
\[
\begin{aligned}
\| \dt J_0 \|_{L^\infty(0,T;L^\infty(\Omega))} = \frac{\lambda}{1+\| \beta \|_{L^\infty(\Omega)}} \| \dt q \|_{L^\infty(0,T;\Linf)} 
\\
< C_K \inf_{t \in (0,T), \, x \in \Omega} \frac{1+\alpha(x) + \lambda q(t,x)}{1+\beta(x)} = C_K \inf_{t \in (0,T), \, x \in \Omega} \lambda_{min} ( J_0(t,x) )
\end{aligned}
\]
holds, by choosing $$D_0= \lambda^{-1} (1+ \| \beta \|_{L^\infty(\Omega)}) \half C_K \inf_{t \in (0,T), \, x \in \Omega} \frac{1+\alpha(x) + \lambda q(t,x)}{1+\beta(x)} > 0.$$ We note that the infimum is strictly positive, since $q$ is bounded by $\lambda^{-1} (1- \| \alpha \|_{L^\infty(\Omega)})$. 

To apply Banach's fixed point theorem, it remains to ensure the self mapping and contraction property of $\calT$ in a suitable Banach space.
\subsubsection*{Self-mapping property}
Let $R>0$. We prove $u := \calT z \in \calB_R$ if $z \in \calB_R$. First, we show that $\calB_R$ is non-empty.
To this end, consider the constant in time function $c_0: t \mapsto u_0$. Clearly $c_0 \in \calU_\diamondsuit(0,T).$ We choose $r_0$ from the smallness assumption \eqref{eq:smallness_1} small enough so that  $\| c_0 \|_{\calU(0,T)} \leq R$ and that $\| c_0 \|_{L^\infty(0,T;L^\infty(\Omega))} = \| u_0 \|_{L^\infty(\Omega)} < \lambda^{-1} (1- \| \alpha \|_{\Linf} ) $. Automatically, we have $\| \dt c_0 \|_{\Linf} = 0 < D_0. $

To show the self mapping property, we rely on the estimates \eqref{eq:estimate_u_t_H2_v2} and \eqref{eq:estimate_u_t_H3_v2} with \eqref{eq:energy_estimate_H4} in the proof of Theorem~\ref{thm:lin-well} and use smallness of $r_0$ in \eqref{eq:smallness_1}. First, we note that the norm of $u$ in $\calU(0,T)$ can be made sufficiently small if the energy $\mathcal{E}$ and $r_0$ are small. However $\mathcal{E}$ itself can be made sufficiently small if $r_0$ is small.  Thus, from the estimates of the previous proof, we conclude $\| u \|_{\calU(0,T)} \leq R$ if $r_0$ is chosen sufficiently small.

We are left to show that $\| p \|_{L^\infty(0,T;L^\infty(\Omega))} \leq \frac{1- \| \alpha \|}{\lambda}$ and $
\| p_t \|_{L^\infty(0,T;L^\infty(\Omega))}< D_0$, which is achieved by, if needed, further decreasing $r_0$ and exploiting the embedding $  \calU(0,T) \embed W^{1, \infty}(0,T;L^\infty(\Omega))$. 
\subsubsection*{Contraction}

Let $z^{(1)}, z^{(2)} \in \calB_R$ and denote $\bar{z} = z^{(1)} - z^{(2)}$, $u^{(1)} = \calT z^{(1)}$, and $u^{(2)} = \calT z^{(2)}$ with $\bar{u} = u^{(1)} - u^{(2)}$. Then, by definition of $\calT$ we have
\[
\begin{aligned}
    A_0 u_t^{(1)} + \sum_{j=1}^d A_j(z^{(1)}) u^{(1)}_{x_j} - \mathcal{J} * \lapl u^{(1)} &= g, \\
    A_0 u_t^{(2)} + \sum_{j=1}^d A_j(z^{(2)}) u^{(2)}_{x_j} - \mathcal{J} * \lapl u^{(2)} &= g.
\end{aligned}
\]
Subtracting the equations and multiplying by $S_W(z^{(1)})$ yields that
$\bar{u}$ solves
\[ \label{eq:contractive_eq}
\left\{
\begin{array}{ll}
\lin_0(z^{(1)}) \bar{u}_t + \sum_{j=1}^d M_j(z^{(1)}) \bar{u}_{x_j} - J_0(z^{(1)}) (\mathcal{J} * \Delta \bar{u} ) = h & \text{on } (0,T) \times \Omega, \\
\bar{u}(0) = 0 & \text{on } \Omega, \\
\bar{u} = 0 & \text{on } (0,T) \times \partial \Omega
\end{array}
\right.
\]
with $h= S_W(z^{(1)} )\sum_{j=1}^d M_j(\bar{z}) u^{(2)}_{x_j}$. Since $h(t)$ is not in $L^1(0,T;\Honefour) \subset \mathcal{V}(0,T)$,  it is not directly possible to apply Theorem~\ref{thm:lin-well} to get an estimate of $\bar{u}$ depending on $\bar{z}$. Thus, we only show that $\calT$ is contractive with respect to the weaker norm  $L^\infty\!\left(0,T;L^2(\Omega)\right)$. 
Note that this suffices to show the existence of a fixed point as can be seen from Lemma \ref{lemma:banach}. The spaces 
\[
\begin{aligned}
X=\calU_\diamondsuit(0,T) \, \embed \, Y = L^\infty\!\left(0,T;L^2(\Omega)\right) 
\end{aligned}
\] 
and thus satisfy the assumptions from Lemma \ref{lemma:banach}, since \(X\) has a separable predual space. 
By definition $\mathcal{B}_R$ is non-empty and bounded in $\calU_\diamondsuit(0,T)$. Also, $\mathcal{B}_R$ is a ball in $\calU_\diamondsuit(0,T) \cap W^{1,\infty}(0,T;L^\infty(\Omega))$, implying by Banach-Alaoglu \cite[ Thm. 3.2.4]{Buhler} that $\mathcal{B}_R$ is weakly-$\star$ closed in \( X\).

To show contractivity with respect to $Y$, we test~\eqref{eq:contractive_eq} with $\bar u$ to obtain
\[
( \lin_0(z^{(1)})  \bar{u}_t + \sum_{j=1}^d M_j(z^{(1)}) \bar{u}_{x_j} - J_0(z^{(1)}) (\mathcal{J} * \Delta \bar{u} ), \bar{u} )_2 = (h, \bar{u} )_2 .
\]
Performing exactly the same manipulations  as in Step (iv) of the proof of Theorem \ref{thm:lin-well} together with Sobolev embeddings leads to the estimate (\emph{cf.} 
\eqref{eq:uniqueness_result})
\[ \label{eq:contrative_energy_estimate}
\begin{aligned}
\frac 12 c_{\lin_0} \| \bar u(t)\|^2_{L^2(\Omega)} +  \frac{1}{2}C_{\mathcal{J}} \int_0^t \| \jac \mathcal{J} * \bar u \|^2_{L^2(\Omega)} \ds 
\leq C_{4}C_A R   \int_0^t \| \bar u \|^2_{L^2(\Omega)} + (h, \bar u)_2 \ds 
\end{aligned}
\]
with $C_4>0$.
Further, we estimate
\[
\begin{aligned}
\int_0^t (h, \bar u)_2 \ds &\leq \intt \| h \|_{L^2(\Omega)} \| \bar u \|_{L^2(\Omega)} \ds
\\
&\leq \intt 
\| S(z^{(1)}) \|_{\Linf} \sum_{j=1}^d \|  M_j(\bar{z}) \|_{\Ltwo} \| u^{(2)}_{x_j} \|_{\Linf} \| \bar u \|_{\Ltwo} 
\\
&\leq C_A^2 R^2 \half \intt  \|  M_j(\bar z) \|_{\Ltwo}^2 +  \| \bar u \|_{\Ltwo}^2  \ds .
\end{aligned}
\]
Therefore, we obtain by Grönwall's ineqality
\[
\begin{aligned}
\| \bar u \|^2_{L^\infty(0,T;\Ltwo)} \leq C_5  \| \bar z \|^2_{L^2(0,T;\Ltwo)} \leq C_5 T \| \bar z \|^2_{L^\infty(0,T;\Ltwo)}  .
\end{aligned}
\]
Choosing $T>0$ so small that $T < C_5^{-1}$, we obtain contractivity in $L^\infty(0,T;\Ltwo)$.
Thus, using Lemma \ref{lemma:banach} Banach's fixed point theorem guaranties a unique $z \in \calB_R$, solving \eqref{eq:main_system}.
\end{proof}
Using assumption \ref{as:kernel2} we can slightly refine the above theorem relaxing the smallness conditions on $r_0$.
\subsubsection*{Proof of Theorem \ref{th:nonlin-well-refined}}
\begin{proof}
We use the same mapping as in the proof of the previous Theorem \ref{th:nonlin-well}, \emph{cf.} \eqref{eq:fixed_point_mapping}, \eqref{eq:fp_chooseS}, but now use as domain the bigger ball
\[
\begin{aligned}
\calB_R := \{ z=(q,w) \in \calU_\diamondsuit(0,T) : \| z \|_{\calU(0,T)} \leq R \text{ with } z(0)=u_0 \\
\| q \|_{L^\infty(0,T;L^\infty(\Omega))} \leq \frac{1 - \| \alpha \|_{L^\infty}}{ \lambda}\}.
\end{aligned}
\]
The map is shown to be well defined through the same arguments as in the proof of Theorem \ref{th:nonlin-well}.
The remarkable difference is now, that to satisfy the assumption
\[
\begin{aligned}
\| \dt J_0 \|_{L^\infty(0,T;L^\infty(\Omega))}
< C_K \inf_{t \in (0,T), \, x \in \Omega} \lambda_{min} ( J_0(t,x) )
\end{aligned}
\]
we decrease $T$ as needed to ensure Lemma \ref{lemma:kernel2} holds. We note that degeneracy of $T$ is not possible, since $\|  S_W(u) \|_{W^{1, \infty}(0,T;L^\infty)}< \infty$ for $u \in \mathcal{B}(0,T)$. 

We remark that contractivity is shown along the same arguments as in the proof of Theorem \ref{th:nonlin-well}, for $T(R)>0$ chosen small enough.
\subsubsection*{Self-mapping property refined}
We prove $u := \calT z \in \calB_R$ if $z \in \calB_R$. 
We define $R:= \max\{R_1,R_2,R_3,R_4\}$, where $R_i$ are specified below.
First, we show that $\calB_R$ is non-empty.
For this consider the constant in time function $c_0: t \mapsto u_0$. Clearly $c_0 \in \calU_\diamondsuit(0,T).$ Choosing $R_1=\| c_0 \|_{\calU(0,T)}$, we have $\| c_0 \|_{\calU(0,T)} \leq R$. Moreover, $\| c_0 \|_{L^\infty(0,T;L^\infty(\Omega))} = \| p_0 \|_{L^\infty(\Omega)} \le \lambda^{-1} (1- \| \alpha \|_{\Linf} ) $ by \eqref{eq:refine_theorem_assumption}. 

To show the self mapping property for suitably chosen $R$ and $T$ we rely on the estimates \eqref{eq:estimate_u_t_H2_v2} and \eqref{eq:estimate_u_t_H3_v2} with \eqref{eq:energy_estimate_H4} in the proof of Theorem~\ref{thm:lin-well}. First we note that for each used norm in these estimates $\| \cdot \|$ we have $\| M_j,J_0\|\leq C_A R$ with a constant $C_A$ depending on the nonlinearity parameters and $\alpha,\beta$.
With this we now bound the term $\exp{\left( \int_0^T Q(s) \ds \right)} \leq C_{exp}$ independent of $R$ by choosing $T(R)>0$ small enough. Recall that $Q$ was defined in \eqref{eq:Q}. We have
\[
\begin{aligned}
&\int_0^T Q(s) \ds \\
&\leq \sqrt{T} \| M_0 \|_{\calU(0,T)} + T \| M_0 \|_{\calU(0,T)} + \left( T + T \|M_0 \|^2_{\calU(0,T)} \right) \left( \sum_{j=1}^d T \| M_j \|_{\calU(0,T)} + T \| J_0 \|_{\calU(0,T)} \right)  \\
&\leq \sqrt{T} C_A R + T C_A R + (  T+ T C_A^2 R^2 ) (T d C_A R + T C_A R) \\
&\leq \sqrt{T} C_AR  + T  C_A R + T^2 ( 1 + C_A^2 R^2 ) (1+d) C_A R .
\end{aligned}
\]
Thus, choosing $T$ small enough, we obtain an independent bound on $R$.
From estimate \eqref{eq:energy_estimate_H4} we have
\[
\| u \|_{L^\infty(0,T;\Honefour)}^2 \leq C_G C_{exp} \left( C_R \| M_0 \|_{L^\infty(0,T;H^2(\Omega))} \| u_0 \|^2_{\Honefour} + \| f \|_{\mathcal{V}(0,T)}^2 \right) 
\]
and, therefore, with
\[
 \| M_0 \|_{L^\infty(0,T;H^2(\Omega))} \leq \| M_0(0) \|_{H^3(\Omega)} + \sqrt{T} \| \dt M_0 \|_{L^2(0,T;H^3(\Omega))} \leq C_A \| u_0 \|_{\Honefour} + \sqrt{T} C_A R
\]
we obtain an $R$ independent estimate $$\| u \|_{L^\infty(0,T;\Honefour)}^2 \leq R_2^2:= C_G C_{exp} \left( C_R (C_A \| u_0 \|_{\Honefour} + C_A ) \| u_0 \|^2_{\Honefour} + \| f \|_{\mathcal{V}(0,T)}^2 \right) $$ choosing $T$ small enough. Similarly, from estimate \eqref{eq:estimate_u_t_H3_v2} we have using
\[
 \| J_0 \|_{L^\infty(0,T;\Honethree} \leq \| J_0(0) \|_{\Honethree} + \sqrt{T} \| \dt J_0 \|_{L^2(0,T;\Honethree)}
\]
that
\[
\begin{aligned}
\| u_t \|_{L^2(0,T;\Honethree} \leq C_3 (  \| M_0(0) \|_{H^3(\Omega)} + \sqrt{T} \| \dt M_0 \|_{L^2(0,T;H^3(\Omega))} ) (  \| M_0(0) \|_{H^3(\Omega)} \\
+ \sqrt{T} \| \dt M_0 \|_{L^2(0,T;H^3(\Omega))} + d T C_A R  ) \sqrt{R_2} + \| f \|_{L^2(0,T;\Honethree)} \leq R_3,
\end{aligned}
\]
if $T$ is chosen small enough. 
Using \eqref{eq:estimate_u_t_H2_v2}, we have
\[ 
\begin{aligned}
\| u_t \|_{L^\infty(0,T;\Honetwo)} \leq C_2 \|M_0\|_{L^\infty(0,T;H^2(\Omega) )} \Bigl( \| f \|_{L^\infty(0,T;\Honetwo)} +  \bigl( \| J_0 \|_{L^\infty(0,T;H^2(\Omega)} \\
+ \sum_{j=1}^d \|  M_j \|_{L^\infty(0,T;H^2(\Omega))} \bigr)  \sqrt{ R_2}  \Bigr) \leq R_4,
\end{aligned}
\]
if $T$ is chosen small enough. Again $R_4$ can be chosen independent of $R$ by similar arguments as before.
Putting everything together, we obtain $\| u \|_{\calU(0,T)} \leq R$.

We are left to show that $\| p \|_{L^\infty(0,T;L^\infty(\Omega))} \leq \frac{1- \| \alpha \|_{L^\infty(\Omega)}}{\lambda}$. By \eqref{eq:refine_theorem_assumption} we obtain
\[
\begin{aligned}
\| p \|_{L^\infty(0,T;L^\infty(\Omega))} \leq & \| p_0 \|_{L^\infty(\Omega)}+ \sqrt{T} \| \dt p \|_{L^2(0,T;L^\infty(\Omega))})
\\
  \leq & \frac{1- \| \alpha \|_{L^\infty(\Omega)}}{2\lambda} + \sqrt{T} C \| \dt p \|_{L^2(0,T;H^2(\Omega))})\\
   \leq &  \frac{1- \| \alpha \|_{L^\infty(\Omega)}}{2\lambda}+ \sqrt{T} C R,
\end{aligned}
\]
where $C$ is the embedding constant $H^2(\Omega) \embed L^\infty(\Omega)$. Thus:
\[\| p \|_{L^\infty(0,T;L^\infty(\Omega))}\leq   \frac{1- \| \alpha \|_{L^\infty(\Omega)}}{\lambda}
\]
by choosing $T(R)$ small enough.
\end{proof}
\subsection{Inviscid Kuznetsov-type case} \label{se:well-posedness_Kuznetsov}
In this section, we investigate the case of $\mathcal{J}=0$, for whose treatment, we rely on the existing theory for quasilinear symmetric hyperbolic systems on $\R^d$.
To prove Theorem \ref{th:unbounded_domain}, we rely on \cite[Th.~II]{Kato1975}. More recent treatments of this theory can be found in, e.g. \cite[Th.~6.6.1]{Rauch2012} and \cite[Ch.~16, Th.~2.3]{Taylor2011}.

\begin{proof}[Proof of Theorem \ref{th:unbounded_domain}]
Based on assumption $\vartheta = \kappa$, we transform \eqref{eq:main_system} into a quasilinear symmetric hyperbolic system to apply \cite[Theorem II]{Kato1975}.
For this, we define the symmetric matrix
\begin{equation}
S(u) = S(p,v) = 
\begin{pmatrix}
1 + \beta + \mu p & \kappa v^\top \\
\kappa v & (1 + \alpha + \lambda p) \id_{\R^d}
\end{pmatrix} \in \R^{(1+d) \times (1+d)}
\end{equation}
with eigenvalues
\begin{equation}
\begin{aligned}
\sigma_{1,2} &= \frac{ q_1 + q_2 \pm \sqrt{ (q_1 - q_2)^2  + 4 | \kappa v |^2} }{2}, \quad
\sigma_3 &= q_1 ,
\end{aligned}
\end{equation}
where $q_1 = 1 + \alpha + \lambda p$, $q_2 = 1 + \beta + \mu p $ and $\sigma_1, \sigma_2$ have multiplicity $1$, while $\sigma_3$ has multiplicity $d-1$.
Positivity of the eigenvalues is obtained, for example, if
\begin{equation} \label{eq:assump_eigen}
| \alpha + \lambda p | < \frac{1}{8} \quad 
| \beta + \mu p | < \frac{1}{8} \quad
|\kappa v|^2 < \frac{1}{2},
\end{equation}
since then we observe that $\sigma_1$ and $\sigma_3$ are positive. 
For $\sigma_2$ we note that
$$ 
\left( 2 + \alpha + \beta + (\mu + \lambda )p \right)^2  > \left( \alpha - \beta + ( \lambda - \mu ) p \right)^2  + 4 | \kappa v |^2 
$$ 
leads to
$$
1 + 2 \left(\alpha + \beta + (\mu + \lambda) p \right) >  | \kappa v |^2 ,
$$
but we have
$ | 2 (\alpha + \beta + (\mu + \lambda) p ) | < \frac{1}{2}$ so that the inequality holds true if $ | \kappa v |^2 < \frac{1}{2}$.
Note that
\begin{align}
B_j(p,v) &:= S(p,v)A_j(p,v) \\
&=
\begin{pmatrix}
q_2 \kappa v_j + \kappa v^\top q_2 e_j 
& q_2 q_1 e_j^\top + \kappa v^\top ( \kappa v  e_j^\top)  
\\
\kappa v \kappa v_j+ q_1 q_2 e_j 
& \kappa v q_1 e_j^\top + q_1 \kappa e_j v^\top   
\end{pmatrix}
\\ &=  
\begin{pmatrix}
2 \kappa q_2 v_j & q_2 q_1 e_j^\top + \kappa^2 v_j v^\top  \\
\kappa^2 v v_j + q_2 q_1 e_j & \kappa q_1 (  v e_j^\top +  e_j v^\top )
\end{pmatrix}
\end{align}
are symmetric. Thus, $S(u)$ symmetrizes \eqref{eq:main_system} and for certain choices of $\alpha, \beta$ and $u$ small enough we obtain positive eigenvalues of $S$.

We are now left to verify the given Assumptions (4.2) to (4.9) in \cite[Theorem II]{Kato1975}. Assumptions (4.2) to (4.6) follow, since $S(u), A_j(u)$ depend smoothly on $u$ and $f \in C([0,T];H^s(\R^d) )$. Since $B_j(u)$ are symmetric for any $u$, assumption (4.7) also holds. In order to fulfill assumption (4.8), we need to show that the eigenvalues of $S(u)$ remain positive for all $u \in U_0$. We thus check the conditions \eqref{eq:assump_eigen}, that yield
\begin{equation}
| \alpha + \lambda p| \leq | \alpha | + | \lambda | | p | \leq r_\alpha + | \lambda | C_{L^\infty \embed H^s} \| p \|_{H^s} \leq r_\alpha + | \lambda | r_0,
\end{equation}
\begin{equation}
| \beta + \mu p| \leq | \beta | + | \mu | | p | \leq r_\beta + | \mu | C_{L^\infty \embed H^s} \| p \|_{H^s} \leq r_\beta + | \mu | r_0,
\end{equation}
\begin{equation}
| \kappa v | \leq | \kappa |  C_{L^\infty \embed H^s} \| v\|_{H^s} \leq | \kappa | r_0
\end{equation}
obtaining positivity of the eigenvalues by the choices of $r_\alpha, r_\beta, r_0$.
\end{proof}
\begin{remark}
The bounds on $r_\alpha, r_\beta, r_0$ used in the theorem are by no means sharp. However, since $S$ is not symmetric for all $p,v$ one is forced to impose some bounds on the initial conditions and \(\alpha,\beta\).
\end{remark}
%
%
%
%
%
\section{Conclusion}
In this work, we establish well-posedness for a first-order formulation of fractionally damped nonlinear acoustics. Since our analysis does not rely on regularity from damping terms, we symmetrize system~\eqref{eq:main_system} to apply energy estimates that exploit the resulting hyperbolic structure. The nonlinearities introduced by this symmetrization in the non-local damping terms are handled by carefully designed nonlinear coercivity lemmas. For the Westervelt-type system on bounded domains, our results hold locally in time for small data assuming completely monotone memory kernels, while for the Kuznetsov-type system we prove well-posedness in the inviscid case in $\R^d$.

Future work will be concerned with relaxing the assumptions underlying Lemma~\ref{lemma:kernel}.
One direction is to weaken Assumption~\ref{as:kernel}, which is currently formulated for completely monotone kernels~\eqref{def::complete_monot}. We expect that the analysis of \eqref{eq:main_system} can be extended to a broader class of completely positive kernels \cite[Def.~1]{clement1981asymptotic}, provided that the associated resolvents exhibit sufficient regularity (possibly after suitable regularization) to establish Lemma~\ref{lemma:abstractkernel}.

A second direction concerns the diagonal matrix assumption in Lemma~\ref{lemma:kernel}. It remains open whether this assumption is necessary for singular kernels. A positive answer would enable well-posedness for the Kuznetsov-type system using the methods developed here.

Another direction is to study the limiting behavior of \eqref{eq:main_system} as the memory kernels vanish $\mathcal{J} \to 0$. 

Additionally, the study of system \eqref{eq:main_system} with more general boundary conditions is of interest. In the case $\mathcal{J}=\delta_0$, well-posedness of inhomogeneous Dirichlet and slip boundary conditions is shown in \cite{lehner:2026}.

In summary, this work takes a first step toward first-order fractional-in-time models for nonlinear acoustics. It lays a foundation for future research on more detailed mathematical analysis and provides a solid basis for efficient numerical schemes, with potential applications such as parameter identification for \eqref{eq:main_system}.

\section*{Acknowledgment}
P. Lehner was funded in whole or in part by the Austrian Science Fund (FWF)[10.55776/P36318]. 
The work of M. Meliani is supported by ERC Synergy Grant PSINumScat - 101167139.

\bibliographystyle{plain}
\bibliography{lit}

\clearpage
\appendix
\section{Derivation of physical model}\label{se:derivation}
We consider a three dimensional physical space. The governing equations for the model are given by the Navier–Stokes–Fourier system
\begin{equation}\label{eq:NSFsystem}
\left\{
\begin{aligned}
\dt \den 
+ \divv(\den \vel) 
&= 0, 
\\
\den \bigl(\dt \vel + (\vel \cdot \grad)\vel \bigr) 
&= \Divv{\stress} + \den \force,
\\
\den \tem \bigl(\dt \ent + \vel \cdot \grad \ent\bigr) 
&= -\divv{\heat} + \work,
\end{aligned}
\right.
\end{equation}
see \cite[p.~576]{Pierce:2019} and \cite[Ch.~2]{enflo2002}, whose equations correspond to the conservation of mass, momentum, and energy, respectively. The constitutive relations considered in this work are assumed to be
\begin{equation} \label{eq:const_relation}
\left\{
\begin{aligned}
\stress 
&= -\pre \id 
+ 2 \shearK \conv \!\left( \dt{\strain} - \tfrac{1}{3} \tr(\dt \strain) \id \right) 
+ 2 \bulkK \conv \tr(\dt \strain) \id,
\\
\dt \strain 
&= \tfrac{1}{2} \bigl( \jac \vel + \jac^\transpose \vel \bigr),
\\
\heat 
&= - \heatK \conv \grad \tem,
\\
\work 
&= \bigl( \stress + \pre \id \bigr) \ccdot \jac \vel,
\end{aligned}
\right.
\end{equation}
see \cite[Eq.~(30)]{Holm2013}, \cite{Prieur2012}.
In \eqref{eq:NSFsystem} and \eqref{eq:const_relation} $\den$ denotes the mass density, $\vel$ the velocity field, $\tem$ the temperature distribution, $\ent$ the specific entropy, $\strain$ the (linearized) strain tensor, $\force$ the external force density, $\heat$ the heat flux, and $\work$ the dissipative work due to viscous friction. The (linearized) stress tensor $\stress$ is decomposed into an isotropic pressure part $\pre$, a traceless deviatoric component, and a purely dilatational part, the latter corresponding, respectively, to shear and bulk viscosity effects. The time-dependent kernels $\heatK$, $\shearK$, and $\bulkK$ are assumed to be positive and can be thought of as diagonal matrices that can be identified with a vector containing the diagonal entries. 

By choosing $\shearK = \mu_S \delta_0$,  $\bulkK = \mu_B \delta_0$, $\heatK = \kappa \delta_0$, where $\delta_0$ denotes the Dirac distribution centered at $t = 0$ and $\mu_S$ shear viscosity, $\mu_B$ bulk viscosity, $\kappa$ heat conductivity coefficients, one recovers the standard constitutive relations used in nonlinear acoustics, see \cite{enflo2002}. 
The following derivation closely follows the arguments presented in \cite[Sec. 2]{QuadraticWave}, with the primary difference being that their analysis is carried out in a unitless framework and the assumption $\shearK = \mu_S \delta_0$,  $\bulkK = \mu_B \delta_0$, $\heatK = \kappa \delta_0$. For completeness and clarity, we reproduce the derivation here. 
\subsection{Modeling assumptions}
We assume the existence of an in space and time constant state
\begin{equation} \label{eq:constant_states}
\bigl( \den_0, \vel_0, \ent_0 ,\tem_0, p_0 \bigr) \quad \text{ with }\vel_0 = 0
\end{equation}
of the homogeneous, quiescent medium and make use of the small amplitude approximation
\begin{equation} \label{eq:SAA}
\den = \den_0 + \mach \de, \quad 
\vel =		    \mach \ve, \quad
\ent = \ent_0 + \mach \en, \quad
\tem = \tem_0 + \mach \te, \quad
p = p_0 + \mach \pr
\end{equation}
with Mach number $\mach \ll 1$; see  \cite{DekkersRozanova2020, Tani:2017}. In particular, all derivatives of $\den_0, \ent_0, \tem_0, p_0$ vanish. Additionally, we assume
\begin{equation} \label{eq:SAAA}
\shearK, \, \bulkK, \, \heatK \in \bigO(\mach) \quad 
\force, \, \curl \vel \in \bigO(\mach^2) \quad 
\curl \paren*{\curl \vel} \in \bigO(\mach^3) .
\end{equation}
Neglecting certain nonlinearities, one obtains additionally the linear relation
\begin{equation} \label{eq:wester_assumption}
\mach \den_0^2 c_0^2 \abs{\ve}^2 = \mach \pr^2 + \bigO(\mach^2) 
\end{equation}
with $c_0$ speed of sound.  Assumption~\eqref{eq:wester_assumption} is used to derive the Westervelt equation~ \cite{Westervelt1963Parametric}.
\subsection{Navier-Stokes-Fourier system up to $\bigO(\mach^3)$}
In the derivation, we make use of the substitution corollary, a procedure known in nonlinear acoustics as Blackstock’s scheme, see \cite{Blackstock:63}.
This approach allows substituting linear relations in $\mach$ into terms of quadratic order in $\mach$, since no information is lost at the relevant order of approximation.
\subsubsection{Momentum equation}
Plugging the constitutive relations~\eqref{eq:const_relation} into the right hand side of the momentum equation, using~\eqref{eq:SAA},~\eqref{eq:SAAA}, and $\tr( \dt \strain)=\divv \vel$, we arrive at
\begin{equation} \label{eq:momentum_rhs}
\begin{aligned}
\Divv{\stress} + \den \force 
&= - \grad \pre + \shearK \conv \paren*{\grad (\divv \vel) + \laplvec \vel - \tfrac{2}{3} \grad (\divv \vel )  } \\ &\quad + 
\bulkK \conv \grad (\divv \vel) + \den_0 \force + \bigO(\mach^3) \\
&= - \grad \pre +  \shearK \conv \laplvec \vel + \paren*{ \tfrac{1}{3} \shearK +  \bulkK  } \conv \grad (\divv \vel)  + \den_0 \force + \bigO(\mach^3) \\
&= - \grad \pre +  \mach \paren*{ \tfrac{4}{3} \shearK + \bulkK  } \conv \laplvec \ve + \den_0 \force + \bigO(\mach^3),
\end{aligned}
\end{equation}
where in the last line we use the vector calculus identity $ \grad (\divv \vel ) = \laplvec \vel + \curl \paren*{ \curl \vel }.$ A Taylor expansion of $\pre(\den, \ent)$ around $(\den_0, \ent_0)$ yields
\begin{equation} \label{eq:pressure_taylor}
\begin{aligned}
\pre &=  \pre(\den_0, \ent_0) +  \paren*{ \partial_{\den}  \pre }_0 ( \den - \den_0 ) +  \paren*{ \partial_{\ent} \pre}_0 ( \ent - \ent_0 ) 
+ \half  \paren*{ \partial^2_{\den} \pre}_0 (\den-\den_0)^2 \\ 
&\quad+ \half  \paren*{\partial^2_{\ent} \pre }_0 (\ent-\ent_0)^2 +  \paren*{ \partial^2_{\den, \ent} \pre}_0 (\den - \den_0 ) (\ent- \ent_0 )  + \dots 
\\
&= \pre_0 + c_0^2  \mach \de  +  d_0^2  \mach \en 
+ \half   E_0 (\mach \de )^2   + \half F_0 ( \mach \en)^2 +  G_0 \mach^2 \de \en  + \bigO(\mach^3)
\end{aligned}
\end{equation}
with $\pre_0=\pre(\den_0,\ent_0)$, $c_0^2 = \paren*{ \partial_{\den}  \pre }_0:= \partial_{\den}  \pre(\den_0,\ent_0)$, $ d_0^2 = \paren*{ \partial_{\ent} \pre}_0 := \partial_{\ent}  \pre(\den_0,\ent_0) $, $E_0 = \paren*{ \partial^2_{\den} \pre}_0$, $F_0 = \paren*{ \partial^2_{\ent} \pre}_0$, and $ G_0 = \paren*{ \partial^2_{\den, \ent} \pre}_0.$ Note that $E_0=\frac{B}{A} c_0^2 \rho_0^{-1}$, where $\frac{B}{A}$ is the nonlinearity parameter used in nonlinear acoustics.
In particular, we get
\begin{equation} \label{eq:pressure_linear}
\mach \de =	\mach c_0^{-2}\pr - c_0^{-2} d_0^2 \mach \en   + \bigO(\mach^2) .
\end{equation}
For the left hand side of the momentum equation, we have using the vector calculus identity $ \paren*{ \vel \cdot \grad } \vel = \half \grad \paren*{ \abs{\vel}^2} + \paren*{ \curl \vel } \times \vel $ and approximations~\eqref{eq:SAA},~\eqref{eq:SAAA}
\begin{equation} \label{eq:momentum_lhs}
\den \bigl( \dt \vel + (\vel \cdot \grad)\vel \bigr) = \mach  \den_0 \dt \ve  + \mach^2 \paren*{ \de \dt \ve + \half \den_0 \grad \paren*{ \abs{\ve}^2}  } + \bigO(\mach^3) .
\end{equation}
Using relation~\eqref{eq:pressure_linear} and $\mach \dt \ve = - \mach \den_0^{-1} \grad \pr + \bigO(\mach^2) $, we obtain by substitution corollary
\begin{equation} \label{eq:momentum_subs}
\mach^2 \de \dt \ve 
= \mach^2 c_0^{-2} \bigl(   \pr -  d_0^2 \en \bigr) ( - \den_0^{-1} \grad \pr) + \bigO(\mach^3) .
\end{equation}
Combining~\eqref{eq:momentum_rhs},~\eqref{eq:momentum_lhs},~\eqref{eq:momentum_subs} we have, neglecting $\bigO(\mach^3)$ terms,
\begin{equation} \label{eq:momentum_final}
\begin{aligned}
\mach   \den_0 \dt \ve  +  \mach^2 \frac{\den_0}{2} \grad \paren*{ \abs{\ve}^2}  -  \mach  \viscK  \conv \laplvec \ve  
= \mach \paren*{  \mach  c_0^{-2} \den_0^{-1}  \pr - \mach S_\vel   - 1 } \grad \pr + \den_0 \force
\end{aligned}
\end{equation}
with $\viscK:=\frac{4}{3} \shearK + \bulkK$ and $S_\vel:=  c_0^{-2} \den_0^{-1}  d_0^2 \en_C $, where $\en_C$ comes from~\eqref{eq:entropy_constant}, see below. Using assumption~\eqref{eq:wester_assumption} one even has by substitution corollary
\begin{equation} \label{eq:wester_momentum}
\mach^2 \frac{\den_0}{2} \grad \paren*{ \abs{\ve}^2} = \mach^2  \frac{\den_0}{2} \grad \paren*{ \den_0^{-2} c_0^{-2} \pr^2} = \mach^2 \den_0^{-1} c_0^{-2} \pr \grad \pr + \bigO(\mach^3),
\end{equation}
resulting in a cancellation of the nonlinear terms in \eqref{eq:momentum_final}.
\subsubsection{Continuum equation}
In order to approximate the continuum equation, we first, using~\eqref{eq:SAA}, simplify the left hand side of the entropy equation to
\begin{equation} \label{eq:entropy_lhs}
\begin{aligned}
\den \tem \bigl(\dt \ent + \vel \cdot \grad \ent \bigr) &= \mach \den_0 \tem_0 \dt \en+ \mach^2 \paren*{ \den_0 \te \dt \en 
+ \de \tem_0 \dt \en + \den_0 \tem_0 \ve \cdot \grad \en } \! + \bigO(\mach^3)
\end{aligned} 
\end{equation}
and the right hand side, using~\eqref{eq:const_relation},~\eqref{eq:SAA},~\eqref{eq:SAAA} to
\begin{equation} \label{eq:entropy_rhs}
\begin{aligned}
-\divv{\heat} + \work 
&= \heatK \conv \lapl \tem 
+ \paren*{ 2 \shearK \conv \paren*{ \dt{\strain} - \tfrac{1}{3} \tr(\dt \strain) \id } + 2 \bulkK \conv \tr(\dt \strain) \id } \ccdot \jac \vel \\
&= \mach \heatK \conv \lapl \te + \bigO(\mach^3) .
\end{aligned}
\end{equation}
Combining~\eqref{eq:entropy_lhs} and~\eqref{eq:entropy_rhs}, we have
\begin{equation} \label{eq:entropy_linear}
\mach \den_0 \tem_0 \dt \en = 0 + \bigO(\mach^2),
\end{equation}
which also leads to
\begin{equation} \label{eq:entropy_constant}
\mach \en = \mach \en_C + \bigO(\mach^2)
\end{equation}
with $\en_C$ constant in time. A Taylor expansion of $\tem(\den, \ent)$ around $(\den_0, \ent_0)$ yields
\begin{equation}
\tem = \tem_0 + \paren*{ \partial_{\den}  \tem }_0 ( \den - \den_0) +  \paren*{ \partial_{\ent} \tem}_0 (\ent - \ent_0) + \dots 
\end{equation}
so that 
\begin{equation} \label{eq:temp_linear}
\mach \lapl \te =  \mach a_0^2 \lapl \de + \mach b_0^2 \lapl \en + \bigO(\mach^2) 
\end{equation}
with $a_0^2= \paren*{ \partial_{\den}  \tem }_0$ and $b_0^2 =  \paren*{ \partial_{\ent} \tem}_0$. Therefore, using substitutions~\eqref{eq:entropy_linear} and~\eqref{eq:temp_linear} in~\eqref{eq:entropy_lhs} we get, neglecting $\bigO(\mach^3)$ terms,
\begin{equation} \label{eq:entr_sim}
\den_0 \tem_0 \bigl( \mach  \dt \en + \mach^2 \ve \cdot \grad \en \bigr) = \mach \heatK \conv \paren*{  a_0^2 \lapl \de +  b_0^2 \lapl \en} .
\end{equation}
Finally, using substitutions~\eqref{eq:pressure_linear} and~\eqref{eq:entropy_constant} in~\eqref{eq:entr_sim} results in 
\begin{equation} \label{eq:entropy_final}
\mach  \dt \en + \mach^2 \ve \cdot \grad \en_C  = \mach \lambda_0  \heatK \conv   \lapl \pr  + \mach S_\lapl + \bigO(\mach^3)
\end{equation}
with $\lambda_0 := \den_0^{-1} \tem_0^{-1}  a_0^2 c_0^{-2} $ and
$S_\lapl:= \den_0^{-1} \tem_0^{-1}  \paren*{  b_0^2 - c_0^{-2} d_0^2 a_0^2 } \heatK \conv \lapl \en_C $.

The continuum equation using~\eqref{eq:SAA} is
\begin{equation} \label{eq:cont_simple}
\mach \bigl( \dt \de + \den_0 \divv{ \ve } \bigr) + \mach^2 \divv{\bigl( \de \ve \bigr)} = 0 .
\end{equation}
Moreover, using substitutions~\eqref{eq:pressure_linear},~\eqref{eq:entropy_constant} yields
\begin{equation} \label{eq:cont_sub}
\mach^2 \divv{\paren*{ \de \ve}} = \mach^2 \divv \Bigl( c_0^{-2} \bigl(   \pre - d_0^2 \en \bigr) \ve  \Bigr) + \bigO(\mach^3) .
\end{equation}
Differentiating the Taylor series of $\pre$ ~\eqref{eq:pressure_taylor} in time, using substitutions~\eqref{eq:entropy_linear},~\eqref{eq:entropy_constant}, $\mach \dt \de = - \mach \den_0 \divv \ve + \bigO(\mach^2)$, and using~\eqref{eq:entropy_final} gives
\begin{equation} \label{eq:pressure_time}
\begin{aligned}
\mach \dt \pr &= c_0^2  \mach \dt \de  +  d_0^2 \mach \dt \en 
+   E_0 \mach^2 \de \dt \de 
+   G_0 \mach^2 \en \dt \de  + \bigO(\mach^3)  
\\
&= c_0^2  \mach \dt \de  +  d_0^2  \mach \dt \en 
+ \bigl(  E_0 \mach^2 \de  +   G_0 \mach^2 \en \bigr) (-\den_0 \divv \ve)  + \bigO(\mach^3) 
\\
&= c_0^2  \mach \dt \de 
+ \mach d_0^2  \lambda_0 \heatK \conv   \lapl \pr  
+ \mach d_0^2 S_\lapl 
- \mach^2 d_0^2 \ve \cdot \grad \en_C    \\
&-   E_0 \mach^2  c_0^{-2} ( \pr  
- d_0^2  \mach^2 \en_C  ) \den_0 \divv \ve 
- G_0 \mach^2 \en_C \den_0 \divv \ve 
+ \bigO(\mach^3) .
\end{aligned}
\end{equation}
Combining~\eqref{eq:cont_simple},~\eqref{eq:cont_sub}, \eqref{eq:pressure_time} the continuum equation neglecting $\bigO(\mach^3)$ terms is
\begin{equation} \label{eq:cont_final}
\begin{aligned}
\mach \dt \pr &+ \mach \paren*{ \den_0 c_0^2  + \mach S_\pre 	+  \mach  \den_0 E_0  c_0^{-2} \pr } \divv \ve \\
&+ \mach^2 \divv ( \pr \ve ) - \mach d_0^2 \lambda_0 \heatK \conv \lapl \pr = - \mach d_0^2 S_\lapl 
\end{aligned}
\end{equation}
with $S_\pre:= ( E_0 c_0^{-2} d_0^2 \den_0    +  G_0  \den_0 - d_0^2 ) \en_C$. Note that $\grad \en \cdot \ve$ terms cancel out. 
\subsubsection{Summary}
Writing~\eqref{eq:momentum_final} and~\eqref{eq:cont_final} in terms of $\pre' := \mach \pr$ and $\vel':= \mach \den_0 c_0 \ve $, we obtain an approximation of~\eqref{eq:NSFsystem}, as
\begin{equation} \label{eq:physical_system}
\begin{aligned}
\dt \pre' +  \paren*{ c_0  + \mach r_0 S_\pre 	+ 2 E_0  c_0^{-1} \pre' } \divv \vel' + r_0 \divv ( \pre' \vel' ) - d_0^2 \lambda_0 \heatK \conv \lapl \pre'  &=  - \mach d_0^2 S_\lapl 	\\
\dt \vel' +   \paren*{ 1 + \mach S_\vel  -  c_0^{-2} \den_0^{-1}  \pre'     } \grad \pre' + \frac{1}{2 \den_0 c_0^2} \grad \paren*{ \abs{\vel'}^2}  - r_0 \viscK  \conv \laplvec \vel'  &= 
\den_0 \force 
\end{aligned}
\end{equation}
with $r_0=\den_0^{-1} c_0^{-1}$.
Assuming~\eqref{eq:wester_assumption}, which yields~\eqref{eq:wester_momentum}, gives a linear momentum equation
\begin{equation} \label{eq:physical_westervelt}
\dt \vel' +   \paren*{ 1 + \mach S_\vel     } \grad \pre' -  r_0 \viscK  \conv \laplvec \vel'  = 
\den_0 \force .
\end{equation}
To obtain~\eqref{eq:main_system} from~\eqref{eq:physical_system}, we set 
\begin{equation}
\begin{aligned} \label{eq:physical_parameters}
A_0 = \diagmatrix(c_0^{-1}, \, 1, \dots, 1), \quad 
\mathcal{J} = (c_0^{-1} d_0^2 \lambda_0 \mathcal{K}, \, \den^{-1}_0 c_0^{-1} \mathcal{V} )^\transpose, \\ \quad 
f=(-M c_0^{-1} d_0^2 S_{\lapl}, \, \den_0 \force)^\transpose \hphantom{ppppppppppp}
\end{aligned}
\end{equation}
and $\alpha= M \den_0^{-1} c_0^{-1} S_p$, $\beta = M S_v$, $\vartheta = \mu = \kappa = c_0^{-2} \den_0^{-1}$, $\lambda = 2 E_0 c_0^{-1} + c_0^{-2} \den_0^{-1}$, where we make use of the product formulas for $\grad$.
Observe that \( \lambda \neq \vartheta \), such that the nonlinear system cannot be written in a conservative form; the system would be conservative if, e.g., $E_0 = 0$ in \eqref{eq:physical_system}.

\section{Boundary information} \label{se:appendix_galerkin}
In this section, we show that the two equations \eqref{ineq:bnd_value_convection_term} and \eqref{ineq:bnd_value_convection_term_lapl} are true in the Galerkin setting of the proof of Theorem \eqref{thm:lin-well}.

To show~\eqref{ineq:bnd_value_convection_term} holds, notice that for all $\phi^N \in U_N$~\eqref{eq:galerkin_projection} gives
\[ \label{eq:gal_test} 
\sum_{j=1}^d( \lin_j u_{x_j}^N, \phi^N)_2 = (- \lin_0 \dt u^N + \dam \mathcal{J} \conv \lapl u^N + f,\phi^N)_2 
\]
and since $-\lapl \phi^N \in U_N$ 
\[ \label{eq:gal_lapl_test} 
\sum_{j=1}^d ( \lin_j u_{x_j}^N, -\Delta \phi^N)_2 = (- \lin_0 \dt u^N + \dam \mathcal{J} \conv \lapl u^N + f,-\Delta \phi^N)_2.
\]
Integrating~\eqref{eq:gal_lapl_test} by parts with Lemma \ref{le:lapl_ibp}, using~\eqref{eq:zero_bnd_properties},~\eqref{eq:zero_bnd_properties_extended}, as well as $f|_{\partial \Omega}=0$, we find 
\begin{equation} 
\label{eq:gal_lapl_test_pi} 
\begin{multlined}
\sum_{j=1}^d (-\Delta (\lin_j u_{x_j}^N), \phi^N)_2 - \sum_{j=1}^d\int_{\partial \Omega}  \lin_j u_{x_j}^N  \cdot \jac \phi^N n  \dS \\ = ( \Delta (\lin_0 \dt u^N ) - \Delta ( \dam \mathcal{J} \conv \lapl u^N) - \Delta f,\phi^N)_2.
\end{multlined}
\end{equation}
From~\eqref{eq:gal_test} the residual $r^N:=\lin_0 u_t^N + \sum_{j=1}^d \lin_j u_{x_j}^N  - \dam \mathcal{J} \conv \Delta u^N - f$ of~\eqref{eq:galerkin_projection} 
satisfies
\begin{equation}
\label{eq:residual_orthogonal}
(r^N, \phi^N)_{2} = 0 \quad \forall \phi^N \in U_N
\end{equation} 
Since $r^N \in L^2\left(0,T;L^2(\Omega)\right)$ it follows from~\eqref{eq:residual_orthogonal} that $r^N= \sum_{K>N}^\infty r_K w_K$. Therefore, we conclude that, due to orthogonality, 
\[
(\Delta r^N, \phi^N)_2 = (\sum_{K > N}^\infty \lambda_K r_K w_K, \phi^N)_2 = 0.
\]
It follows that
\[ \label{eq:gal_laplace_projection}
(-\Delta \sum_{j=1}^d \lin_j u_{x_j}^N, \phi^N)_2 = (\Delta (\lin_0 \dt u^N ) - \Delta ( \dam \mathcal{J} \conv \lapl u^N )  -\Delta f, \phi^N)_2,
\]
which yields, when 
combining~\eqref{eq:gal_lapl_test_pi} and~\eqref{eq:gal_laplace_projection} to~\eqref{ineq:bnd_value_convection_term}. 

Similarly, one can show that \eqref{ineq:bnd_value_convection_term_lapl}
for all $\phi^N \in U_N$ using that $\Delta f|_{\partial \Omega} = 0$. 
Since $\lapl^2 \phi^N \in U_N$ we have
\[ \label{eq:gal_lapl^2_test} 
\sum_{j=1}^d ( \lin_j u_{x_j}^N, \Delta^2 \phi^N)_2 = (- \lin_0 \dt u^N + \dam \mathcal{J} \conv \lapl u^N + f,\Delta^2 \phi^N)_2.
\]
Integration by part gives using \eqref{ineq:bnd_value_convection_term}
\begin{equation} 
\label{eq:gal_lapl^2_test_pi} 
\begin{multlined}
\sum_{j=1}^d (-\Delta (\lin_j u_{x_j}^N), \Delta \phi^N)_2  = ( \Delta (\lin_0 \dt u^N ) - \Delta ( \dam \mathcal{J} \conv \lapl u^N) - \Delta f,\Delta \phi^N)_2
\end{multlined}
\end{equation}
and doing another integration by parts
\begin{equation} 
\label{eq:gal_lapl^2_test_pi2} 
\begin{multlined}
\sum_{j=1}^d (-\Delta^2 (\lin_j u_{x_j}^N), \phi^N)_2 + 
\int_{\partial \Omega} \lapl ( \lin_0 u_t^N + \sum_{j=1}^d \lin_j u_{x_j}^N - \dam \mathcal{J} \conv \lapl u^N) \cdot \jac \phi^N n \dS \\
= ( \Delta^2 (\lin_0 \dt u^N ) - \Delta^2 ( \dam \mathcal{J} \conv \lapl u^N) - \Delta^2 f, \phi^N)_2
\end{multlined}
\end{equation}
Since we also know that $(\Delta^2 r^N, \phi^N) = 0$ we get that the boundary terms vanish, once summed up. However, we need to ensure that $\Delta^2 r^N \in L^2(\Omega)$. This follows from the Banach algebra $H^4$ for $d \leq 3$ and elliptic regularity.
%
%
%
%
\section{Proof of Lemmas~\ref{lemma:kernel} and ~\ref{lemma:kernel2}: Nonlinear coercivity} \label{se:kernel}
The purpose of this section is to prove inequality \eqref{eq:kernel_assump} as stated in Lemma \ref{lemma:kernel}, which is essential in our analysis. Let $T>0$ and let $\mathcal{J} \in L^1_{\mathrm{loc}}(0,T)$ be a positive kernel that is completely monotone on $(0,T)$.
Then $\mathcal{J}$ admits a resolvent of the first kind, by which we mean that there exists a finite Radon measure $\mathfrak{r} \in \mathcal{M}_{\textup{loc}}(0,T)$ such that
\[
(\mathfrak{r} * \mathcal{J})(t)
= \int_0^t \mathcal{J}(t-s)\, d\mathfrak{r}(s)
= 1
\qquad \text{for a.e.\ } t \in (0,T)
\]
see \cite[Chapter~5, Definition 5.1]{Gripenberg1990}.
Moreover, by \cite[Chapter~5, Theorem~5.4,5.5]{Gripenberg1990}, the resolvent $\mathfrak{r}$ admits the decomposition
\begin{equation}\label{resolvent_form}
\mathfrak{r} = a\, \delta_0 + k,
\end{equation}
where $a\geq0$, $\delta_0$ denotes the Dirac measure at~$0$, and 
$k \in L^1_{\textup{loc}}(0,T)$ is a completely monotone function.

To handle the vector valued case, we introduce the notation of a Hadamard product of $a,b \in \R^m$, defined by  
\[
a \odot b = \sum_{i=1}^m (a_i b_i) e_i,
\]
that is, component wise multiplication. With this notation, the component wise convolution of a function $y \in L^1(0,T)$ with a kernel $\mathcal{J} \in L^1(0,T)$ can be written as
\[
(\mathcal{J}*y)(t) = \int_0^t \mathcal{J}(t-s) \odot y(s) \ds =  \int_0^t \mathcal{J}(s) \odot y(t-s) \ds.
\]
As we use it frequently, note that by the Leipnitz integral rule,
\[
\ddt (\mathcal{J}*y)(t) = \ddt \int_0^t \mathcal{J}(s) \odot y(t-s) \ds = (\mathcal{J} *y_t)(t) + \mathcal{J}(t) \odot y(0) .
\]
We now state an auxiliary inequality in an abstract form.

\begin{lemma}\label{lemma:abstractkernel}
Let $T>0$, $d \in \N$, $\Omega \subseteq \R^d$ measurable, and let $p>1$. Assume $\mathcal{J} \in L^1(0,T)$
is completely monotone, and let $\mathfrak{r}$
be its resolvent of the first kind.
Furthermore, let $V\in W^{1,1}_{}\!\left(0,T;L^\infty(\Omega)^{}\right)$ be a positive definite diagonal matrix. Then, for a.e. $t \in (0,T)$ we have
\begin{equation} \label{eq:abstract_conv}
\begin{aligned}
2 \intt ( V(s) (\mathcal{J}* y)(s), y(s) )_{L^2(\Omega)} \ds 
\geq \,
& ( V(t) \mathbf{1} ,  \mathfrak{r} * (\mathcal{J} * y \odot \mathcal{J} * y )(t) )_{L^2(\Omega)} \\
- &\intt (V_t(s) \mathbf{1} , \mathfrak{r} * ( \mathcal{J} * y \odot \mathcal{J} * y )(s) )_{L^2(\Omega)} \ds
\end{aligned}
\end{equation}
for all $y\in L^2(0,t;L^2(\Omega))$.
\end{lemma}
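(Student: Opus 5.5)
Since $V$ is diagonal and the convolution acts componentwise, it suffices to prove the scalar inequality pointwise in $x$ and componentwise, and then sum over components and integrate over $\Omega$. Fix $x\in\Omega$ and a component $i$. Write $v(s)=V_{ii}(s,x)>0$, $J=\mathcal{J}_i$, $r=\mathfrak r_i = a\delta_0 + k$ as in \eqref{resolvent_form}, and $w=J*y_i(\cdot,x)$. The goal is then
\[
2\int_0^t v(s) w(s) y(s)\ds \ \ge\ v(t)\,(r*w^2)(t) - \int_0^t v'(s)\,(r*w^2)(s)\ds .
\]
The key step is to recover $y$ from $w$ via the resolvent. Since $r*J=1$, we have $r*w = 1*y$, so $y=\frac{d}{dt}(r*w)$, with the derivative taken in the distributional sense. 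I will first work with smooth $y$ and argue by density at the end.

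The core estimate is the classical one for completely monotone resolvents. For $r=a\delta_0+k$ with $k$ nonincreasing and nonnegative, the claim is
\[
2\,w(s)\,\frac{d}{ds}(r*w)(s) \ \ge\ \frac{d}{ds}\,(r*w^2)(s) + k(s)\,w(s)^2 \ \ge\ \frac{d}{ds}(r*w^2)(s).
\]
To verify this, write $\frac{d}{ds}(k*w)(s)=k(s)w(0)+\int_0^s k(\sigma)w'(s-\sigma)\,d\sigma$ and compare with $\frac{d}{ds}(k*w^2)$. Equivalently, use the standard identity
\[
2w\,(k*w)' - (k*w^2)' = k(s)w(s)^2 + \int_0^s [w(s)-w(s-\sigma)]^2\, d(-k)(\sigma) \ \ge\ 0,
\]
which holds because $k$ is nonincreasing; see e.g.\ the fundamental identity in Zacher's work or \cite{kaltenbacher2024limiting}. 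The $a\delta_0$ part contributes exactly $2a w w' = a(w^2)'$.

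Next I multiply the pointwise inequality by $v(s)>0$, integrate over $(0,t)$, and integrate the right-hand side by parts in time:
\[
\int_0^t v\,(r*w^2)'\ds = v(t)(r*w^2)(t) - v(0)(r*w^2)(0) - \int_0^t v'\,(r*w^2)\ds .
\]
The boundary term at $0$ vanishes, because $w(0)=0$ and hence $(r*w^2)(0)=0$; this is also where the Dirac part of $r$ is harmless. Integrating over $\Omega$ and summing over components then gives \eqref{eq:abstract_conv}, with the pairing $(V\mathbf 1,\cdot)$ encoding the diagonal weights.

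The main obstacle is regularity. For general $y\in L^2$, the quantity $r*w=1*y$ lies only in $H^1$, and $k$ may merely be in $L^1_{\mathrm{loc}}$ and singular at $0$. I will handle this as follows. First I prove the inequality for $y\in C^1([0,t])$ (or in $H^1$ with values in $L^2(\Omega)$), for which $w\in W^{1,1}$ and the identity above is justified. If necessary, I also replace $k$ by a Yosida-type or truncated regularization $k_n$ that keeps monotonicity. I then pass to the limit using $L^2$ continuity: $y\mapsto J*y$ is continuous on $L^2$, and $y\mapsto r*(w^2)=(1*y)$-type expressions are continuous pointwise in $t$ for a.e. $t$ along a subsequence. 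Finally, $V\in W^{1,1}(0,T;L^\infty)$ ensures that the $V_t$ term passes to the limit by dominated convergence.
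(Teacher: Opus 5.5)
Your proposal is correct and follows essentially the paper's route. You split $\mathfrak r_i=a_i\delta_0+k_i$ and use the pointwise inequality $2w\,(k*w)'\ge (k*w^2)'$ for nonnegative nonincreasing $k$ with $w(0)=0$; the paper imports this from Kaltenbacher et al.\ via the truncations $k_i^{(n)}$, while you derive it from Zacher's identity. Then, as in the paper, you treat the Dirac part by the product rule, weight by $V_{ii}>0$, integrate by parts in time, and identify $y=(\mathfrak r*w)'$ before concluding by density.

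The only loose spot, shared with the paper's one-line density argument, is your appeal to dominated convergence for the $V_t$ term when merely $V\in W^{1,1}(0,T;L^\infty(\Omega))$. That step needs a bound on $\mathfrak r*(w\odot w)$ in $L^\infty(0,t;L^1(\Omega))$, uniform along the approximating sequence. Such a bound follows from the smooth-case inequality with constant $V$ together with $\mathfrak r\ge 0$. For $V\in W^{1,\infty}$, as used in Lemmas~\ref{lemma:kernel} and~\ref{lemma:kernel2}, the step is immediate.
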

\begin{proof}
We give the proof for $y\in C^\infty([0,T];L^2(\Omega))$, the general statement then follows by density of $C^\infty([0,T])$ in $L^2(0,T)$.

Since $\mathcal{J} \in L^1(0,T)$ is completely monotone in each component, we have that $\mathfrak{r} \in \mathcal{M}(0,T)$ is given by \eqref{resolvent_form} in each component, so that
\[ \label{eq:decompostion_measure}
\mathfrak{r}_i = a_i \delta_0 + k_i 
\]
with $a_i>0, k_i \in L^1(0,T)$ completely monotone. 
Note that complete monotonicity implies $\ddt k_i \leq 0$ and infinite differentiability on $(0,\infty)$ yielding that $k_i \in C^\infty(0,\infty) \subset W^{1,1}(\epsilon, T)$ for all $\epsilon>0$. 
Proceeding as in the proofs of \cite[Lemma B.1]{kaltenbacher2021determining} and \cite[Lemma 5.1]{kaltenbacher2024limiting}, we approximate $k_i$ by $$k_i^{(n)}(t) = \max \left\{ 1_{[0,\frac{1}{n}]} k_i(\frac{1}{n}) + 1_{[\frac{1}{n}, T]} k_i(t), \frac{\omega(t)}{n}  \right\}$$ 
with $\omega \in L^1(0,T)$ a fixed positive weight function and by performing the same calculation as in \cite[Lemma B.1]{kaltenbacher2021determining}, we obtain
the point wise in time estimate
\begin{equation} \label{eq:pointwisebound}
(k_i^{(n)} * (v_i)_t)(t) v_i(t) \geq \frac 12 (k_i^{(n)} * \dt (v_i^2))(t) 
\end{equation}  
for all $t \in (0,T)$ for some $v_i \in C^\infty([0,T]; L^2(\Omega))$ with $v_i(0)=0$. 
Since $V$ is diagonal and positive definite, multiplication of \eqref{eq:pointwisebound} by $V_{ii}(t)>0$ preserves the inequality. Summing over $i$ then yields
\begin{equation} \label{eq:vector_pointwsiebound}
V(t) (k^{(n)} * v_t)(t) \cdot v(t) \geq \frac 12 V(t) (k^{(n)} * \dt (v \odot v))(t) \cdot \mathbf{1} .
\end{equation}   
Integrating in \eqref{eq:vector_pointwsiebound} over $\Omega$ and in time on $(0,t)$, and using $v(0)=0$, we obtain, after integration by parts,
\begin{equation} \label{eq:abstractkernel1}
\begin{aligned}
&\intt ( V(s) (k * v_t)(s) ,  v(s) )_{L^2(\Omega)} \ds \geq \frac12 \intt ( V(s) \mathbf{1} , \dt (k *  (v \odot v))(s) )_{L^2(\Omega)} \ds \\
&= - \frac12 \intt ( V_t(s) \mathbf{1} , (k *  (v \odot v))(s) )_{L^2(\Omega)} \ds + \half ( V(t) \mathbf{1} , (k *  (v \odot v))(t) )_{L^2(\Omega)} .
\end{aligned}
\end{equation}
Here, we let $k^{(n)}$ converge to $k$ (by letting $n\to\infty$), since by the same arguments as in 
\cite[Lemma 5.1]{kaltenbacher2024limiting}, we have that $k_i^{(n)}$ converges to $k_i$ in $L^1$(0,T).  Observe that, with $A=\diagmatrix(a_1, \dots, a_m)$, we can apply the integration by parts formula from Lemma~\ref{le:mat_ibp} to obtain
\begin{equation}  \label{eq:abstractkernel2}
\begin{aligned}
&\intt ( V(s) (A \delta_0 * v_t)(s) ,  v(s) )_{L^2(\Omega)} \ds =  \intt ( V(s) A  v_t(s) ,  v(s) )_{L^2(\Omega)} \ds \\
&= -  \half \intt ( V_t(s) A  v(s) ,  v(s) )_{L^2(\Omega)} \ds
+ \half ( V(t) A  v(t) ,  v(t) )_{L^2(\Omega)}
\end{aligned}
\end{equation}
Adding up \eqref{eq:abstractkernel1} and \eqref{eq:abstractkernel2} using the decomposition \eqref{eq:decompostion_measure}, we obtain 
\begin{equation} \label{eq:conv_v}
\begin{aligned}
\intt ( V(s) (\mathfrak{r} * v_t)(s) ,  v(s) )_{L^2(\Omega)} \ds 
&\geq 
- \frac12 \intt ( V_t(s) \mathbf{1} , (\mathfrak{r} *  (v \odot v))(s) )_{L^2(\Omega)} \ds 
\\
&+ \half ( V(t) \mathbf{1} , (\mathfrak{r} *  (v \odot v))(t) )_{L^2(\Omega)}.
\end{aligned}
\end{equation}
Inserting $v = \mathcal{J} * y$ into \eqref{eq:conv_v}, yields $(\mathcal{J} * y)(0) = 0$. Consequently, $\mathfrak{r}* v_t = y$, since
\[
\mathfrak{r}_i * (\mathcal{J}_i * y_i)_t = \mathfrak{r}_i * ( \mathcal{J}_i * (y_t)_i + \mathcal{J}y_i(0)) = 1 \conv (y_t)_i + 1 y_i(0) = y_i - y_i(0) + y_i(0) .
\]
Thus, inequality \eqref{eq:abstract_conv} follows by density.
\end{proof}
\subsection{Proof of Lemma \ref{lemma:kernel}}\label{sec:proof34}
We now apply Lemma~\ref{lemma:abstractkernel} to prove Lemma \ref{lemma:kernel}. Let the assumptions of Lemma \ref{lemma:kernel} hold. 
First, we estimate the second term of \eqref{eq:abstract_conv} using Young's inequality 
\[
\begin{aligned}
&\intt (V_t(s) \mathbf{1} , \mathfrak{r} * ( \mathcal{J} * y \odot \mathcal{J} * y )(s) )_{L^2(\Omega)} \ds \\
&\leq   \| V_t \|_{L^\infty(0,T;L^\infty(\Omega))} \intt ( \mathbf{1} , \mathfrak{r} * ( \mathcal{J} * y \odot \mathcal{J} * y )(s) )_{L^2(\Omega)} \ds \\
&\leq  \| V_t \|_{L^\infty(0,T;L^\infty(\Omega))} \int_\Omega  \mathbf{1} \cdot  \intt \mathfrak{r} * ( \mathcal{J} * y \odot \mathcal{J} * y )(s) ) \ds dx \\
&\leq \| V_t \|_{L^\infty(0,T;L^\infty(\Omega))} ( \int_\Omega  \| {k} \|_{{L^1}(0,t)}  \| \mathcal{J} * y \odot \mathcal{J} * y \|_{L^1(0,t)} dx + \int_0^t\| A^\half \mathcal{J} * y\|_{L^2(\Omega)}^2 \ds )  \\
&\leq  \| V_t \|_{L^\infty(0,T;L^\infty(\Omega))} ( \| {k} \|_{{L^1}(0,T)}  + A_{max} ) \int_0^t \|  (\mathcal{J} * y)(s)\|_{L^2(\Omega)}^2  \ds .
\end{aligned}
\]
Here $A_{max}$ denotes the maximal eigenvalue of $A$.
For the first term in \eqref{eq:abstract_conv}, we have
\[
\begin{aligned}
& ( V(t) \mathbf{1} ,  \mathfrak{r} * (\mathcal{J} * y \odot \mathcal{J} * y )(t) )_{L^2(\Omega)}  
\\
&= \int_0^t ( V(t) \mathbf{1} , {k}(t-s)  \odot (\mathcal{J} * y)(s) \odot (\mathcal{J} * y )(s) )_{L^2(\Omega)} \ds \\
&+  ( V(t) A (\mathcal{J} * y)(t) , (\mathcal{J} * y)(t)  )_{L^2(\Omega)}
\\
&\geq  \inf_{s \in [0,t]} k(s) V_0 \int_0^t\|  (\mathcal{J} * y)(s)\|^2_{L^2(\Omega)} \ds +  \frac12 V_0 A_{min}\|  (\mathcal{J} * y)(t)\|^2_{L^2(\Omega)} \\
&\geq  \inf_{s \in [0,T]} k(s) V_0 \int_0^t\|  (\mathcal{J} * y)(s)\|^2_{L^2(\Omega)} \ds
\end{aligned}
\]
with $V_0=\inf_{s \in (0,t) \, x \in \Omega} \lambda_{\min}(V(s,x)).$ Combining the estimates, we obtain \eqref{eq:kernel_assump} 
with 
\[
2 C_{\mathcal{J}} =  \inf_{s \in [0,T]} k(s) V_0 - \| V_t \|_{L^\infty(0,T;L^\infty(\Omega))} ( \| {k} \|_{{L^1}(0,T)}  + A_{max} )  .
\]
To ensure $C_\mathcal{J}>0$, one needs 
\[ \label{eq:C_J_positive}
\| V_t \|_{L^\infty(0,T;L^\infty(\Omega))} < \frac{\inf_{s \in [0,T]} k(s) }{\| {k} \|_{{L^1}(0,T)}  + A_{max}} V_0 .
\]
We note that $\inf_{s \in [0,T]} k(s) >0$, since $\mathcal{J}$ is non-constant as shown in the proof of \cite[Lemma 5.2]{kaltenbacher2024limiting}.

\subsection{Proof of Lemma~\ref{lemma:kernel2}}\label{sec:proof35}
We now apply Lemma~\ref{lemma:abstractkernel} to prove Lemma \ref{lemma:kernel2}. Let the assumptions of Lemma \ref{lemma:kernel2} hold. In particular, $k_i \in L^q(0,T)$ and $a_i=0$ in \eqref{eq:decompostion_measure}. 
First, we estimate the second term of \eqref{eq:abstract_conv} using Hölder's inequality with $\xi>1$ such that $\frac{1}{\xi} + \frac{1}{q} = 1$ and  Young's inequality 
\[
\begin{aligned}
&\intt (V_t(s) \mathbf{1} , \mathfrak{r} * ( \mathcal{J} * y \odot \mathcal{J} * y )(s) )_{L^2(\Omega)} \ds \\
&\leq   \| V_t \|_{L^\xi(0,T;L^\infty(\Omega))} \left( \intt ( \mathbf{1} , \mathfrak{r} * ( \mathcal{J} * y \odot \mathcal{J} * y )(s) )_{L^2(\Omega)}^q \ds \right)^{\frac{1}{q}}\\
&\leq  \| V_t \|_{L^\xi(0,T;L^\infty(\Omega))} \int_\Omega  \mathbf{1} \cdot  \left( \intt \left( \mathfrak{r} * ( \mathcal{J} * y \odot \mathcal{J} * y )(s) \right)^q \ds \right)^\frac{1}{q}dx \\
&\leq \| V_t \|_{L^\xi(0,T;L^\infty(\Omega))}  \int_\Omega  \| {k} \|_{{L^q}(0,t)}  \| \mathcal{J} * y \odot \mathcal{J} * y \|_{L^1(0,t)} dx   \\
&\leq  T^{\frac{1}{\xi}} \| V_t \|_{L^\infty(0,T;L^\infty(\Omega))} \| {k} \|_{{L^q}(0,T)}  \int_0^t \|  (\mathcal{J} * y)(s)\|_{L^2(\Omega)}^2  \ds .
\end{aligned}
\]
For the first term in \eqref{eq:abstract_conv}, we have
\[
\begin{aligned}
& ( V(t) \mathbf{1} ,  \mathfrak{r} * (\mathcal{J} * y \odot \mathcal{J} * y )(t) )_{L^2(\Omega)}  
\\
&= \int_0^t ( V(t) \mathbf{1} , {k}(t-s)  \odot (\mathcal{J} * y)(s) \odot (\mathcal{J} * y )(s) )_{L^2(\Omega)} \ds 
\\
&\geq  \inf_{s \in [0,t]} k(s) V_0 \int_0^t\|  (\mathcal{J} * y)(s)\|^2_{L^2(\Omega)} \ds \\
&\geq  \inf_{s \in [0,T]} k(s) V_0 \int_0^t\|  (\mathcal{J} * y)(s)\|^2_{L^2(\Omega)} \ds
\end{aligned}
\]
with $V_0=\inf_{s \in (0,t) \, x \in \Omega} \lambda_{\min}(V(s,x)).$ Combining the estimates, we obtain \eqref{eq:kernel_assump} 
with 
\[
2 C_{\mathcal{J}} =  \inf_{s \in [0,T]} k(s) V_0 - \| V_t \|_{L^\infty(0,T;L^\infty(\Omega))}  T^{\frac{1}{\xi}}\| {k} \|_{{L^q}(0,T)}   .
\]
To ensure $C_\mathcal{J}>0$, we obtain the restriction \[ \label{eq:T_restriction}
T < \left( \frac{ \inf_{s \in [0,T]} k(s) V_0 }{ \| V_t \|_{L^\infty(0,T;L^\infty(\Omega))} \| {k} \|_{{L^q}(0,T)} } \right)^{\xi}.
\]
\subsection{Coervicity for non-diagonal \(V\) in the case of exponential kernels}\label{se:appendix_kernel_exponential_proof}
 Although \eqref{eq:vector_pointwsiebound} might fail when $V$ is not diagonal (see Section \ref{eq:non-diagonalcoefficient} below), one can still establish \eqref{eq:kernel_assump} for exponential kernels. Indeed, let $\mathcal{J}(t)=(e^{- a_1 t}, \dots, e^{- a_m t})^\transpose$ be a componentwise exponential kernel with $a_1, \dots, a_m > 0$. Then, by Leibniz integral rule
\[
\ddt (\mathcal{J} * y)(t) 
= \ddt  \int_0^t \mathcal{J}(t-s) y(s) \ds  
=  (\mathcal{J}_t * y)(t) + \mathcal{J}(0) y(t)
= - A (\mathcal{J} * y)(t) + 1 y(t)
\]
with $A=\diagmatrix(a_1, \dots, a_m)$. With $v = \mathcal{J} * y$, we obtain the identity 
\[ \label{eq:exponkernelid}
y = \ddt v + A v
\]
and we note that $v(0)=0$.
Inserting \eqref{eq:exponkernelid} into the left hand side of \eqref{eq:kernel_assump} we obtain
\[
\begin{aligned}
\int_0^t \int_\Omega V(s) &(\mathcal{J} * y)(s) \cdot y(s) dx \ds  
=\int_0^t \int_\Omega V(s) v(s) \cdot \left( \ddt v(s) + A v(s) \right) dx \ds = I_1 + I_2.
\end{aligned}
\]
Using partial integration Lemma \ref{le:mat_ibp} and symmetry of $V$ gives
\[
\begin{aligned}
I_1 &= \int_0^t \int_\Omega V(s) v(s) \cdot \dt v(s) dx \ds 
= \half \int_0^t \int_\Omega \dt \left( v(s)^\transpose V(s) v(s) \right) -   v(s)^\transpose  \dt V  v(s)  dx \ds \\
&\geq \half V_0 \| v(t) \|^2_{L^2(\Omega)}  - \half \| \dt V \|_{L^\infty(0,t;L^\infty(\Omega))} \int_0^t \| v(s) \|_{L^2(\Omega)}^2 \ds .
\end{aligned}
\]
For $I_2$, we have
\[
\begin{aligned}
I_2 =    \int_0^t \int_\Omega V(s) v(s) \cdot A v(s) dx \ds 
&=  \int_0^t \int_\Omega v(s)^\transpose V(s)  A  v(s) dx \ds 
\\&\geq  V_0 \min_{i}{a_i} \int_0^t \| v(s)\|^2_{L^2(\Omega)} \ds .
\end{aligned}
\]
Hence, in total we have
\[
\begin{aligned}
\int_0^t \int_\Omega V(s)& v(s) \cdot y(s) dx \ds 
\geq  
\left(V_0 \min_i{a_i} - \half \| \dt V \|_{L^\infty(0,t;L^\infty(\Omega))} \right) \int_0^t \| v(s) \|^2_{L^2(\Omega)} \ds .
\end{aligned}
\]
Note that this argument remains valid if $\mathcal{J}$ is a finite sum of exponential kernels.
%
%
%
%
%
\subsection{Remarks about certain aspects of the optimality of Lemma~\ref{lemma:kernel}}\label{se:appendix_lemma_assumptions}
We finish this Appendix by showing smallness assumption of the time derivative of the matrix \(V\)~\eqref{eq:kerlem_assump} is essential. Furthermore, we show that \eqref{eq:vector_pointwsiebound}, which is, for general kernels, an essential inequality for the proof of Lemma~\ref{lemma:kernel}, can fail for non-diagonal matrices \(V\). 
\subsubsection{Counter example for coefficient with large derivative}
The assumption that the time derivative of $V$ has to be small compared to a given constant, e.g., \eqref{eq:kerlem_assump} is necessary for \eqref{eq:kernel_assump} to hold, if $T>0$ is not chosen small enough.
To illustrate this choose 
$$\mathcal{J}(t) = e^{-t}, \quad  y(t)= 1- c t, \quad V(t) =c t + \half > 0$$
and $T=1$ with $c \in \R$. In this case we obtain $k=a=1$ as decomposition \eqref{resolvent_form} and \eqref{eq:C_J_positive} becomes
\[
| c | < \frac{e^{-1}}{1+1} \half = \frac{1}{4e} .
\]
Thus, choosing $c$ large enough we expect that inequality \eqref{eq:kernel_assump} fails. Indeed, setting $c=6$ we have 
$$
(\mathcal{J}*y)(t) = \int_0^t e^{-(t-s)} (1- 6 s) \ds = e^{-t}\int_0^t e^{s}  (1- 6 s) \ds =-7 {e}^{-t} - 6t + 7
$$
leading to 
\[ \label{eq:remark_kernel}
\begin{aligned}
\int_0^t V(s) (J*y)(s) y(s) \ds = \int_0^t (6 s + \half ) (-7e^{-s} - 6 s + 7) (1- 6 s) \ds
=
\\
\half {e}^{-t} \left(\left(108t^{4} - 180t^{3} + 18t^{2} + 7t + 959\right) {e}^{t} - 504t^{2} - 966t - 959\right)	.\end{aligned} 
\]
We show that there exists a $t$ such that \eqref{eq:remark_kernel} is negative. Since $\half e^{-t}$ is always positive, we can drop it from \eqref{eq:remark_kernel}. The expression being negative is thus equivalent to finding a $t$ such that
\[
e^{t} <  r(t) = \frac{504t^2 + 966t + 959}{108 t^4 - 180 t^2 + 7 t + 959} .
\]
At $t=\frac{3}{10}$ we have
\[
e^{\frac{3}{10}} < \frac{3235400}{2396837} = r(\frac{3}{10}) ,
\]
thus, due to the smoothness of the involved functions, \eqref{eq:kernel_assump} does not hold for a.e. $t \in (0,T)$ with a positive constant $C_\mathcal{J}.$
\subsubsection{Counter example of \eqref{eq:vector_pointwsiebound} for non-diagonal coefficient} \label{eq:non-diagonalcoefficient}
Even for positive definite $V$, \eqref{eq:vector_pointwsiebound} can fail if $V$ is not diagonal. A counterexample is
$$
k(t)= \begin{pmatrix}
e^{-t} \\  e^{-t} 
\end{pmatrix}
\quad 
V= \begin{pmatrix}
2 & -1 \\ -1 & 2
\end{pmatrix} \quad 
v(t)= \begin{pmatrix}
1-t \\  0 
\end{pmatrix} .
$$
Then, on the one hand, we have
\[
V v(t) \cdot (k*v_t)(t) =( 2 - 2t ) \int_0^t e^{-(t-s)} (- 1 )\ds = 2 ( 1 - t) (e^{-t} - 1)
\]
and on the other hand,
\[
\half V \mathbf{1} \cdot ( k * \dt (v \odot v ) )(t) =  \int_0^t e^{-(t-s)} (s-1) \ds = t - 2 + 2e^{-t}.
\]
Thus, inequality \eqref{eq:vector_pointwsiebound} holds if
$2 ( 1 - t) (e^{-t} - 1) \geq  t - 2 + 2e^{-t},$
which simplifies to
$- 2 t e^{-t}  +  t \geq 0 $, and ultimately gives $t \geq \log(2)$. Therefore,
the inequality fails for $t \in (0, \log(2)).$ 
We note that this does not mean that Lemma \ref{lemma:kernel} never holds for non-diagonal coefficient matrices, but it shows that it cannot be proven with the use of \eqref{eq:vector_pointwsiebound}.
\end{document}